\newtheorem{theorem}{Theorem}
\newtheorem{proposition}[theorem]{Proposition}
\newtheorem{conjecture}[theorem]{Conjecture}
\newtheorem{corollary}[theorem]{Corollary}
\begin{document}
\pagenumbering{arabic}
\title{A Comparison of Popular Point Configurations on $\mathbb{S}^2$}
\author{D.P. Hardin\footnote{Center for Constructive Approximation, Department of Mathematics, Vanderbilt University, Nashville, TN 37240, USA, email: doug.hardin@vanderbilt.edu, timothy.j.michaels@vanderbilt.edu, and edward.b.saff@vanderbilt.edu The research of the authors was supported, in part, by US-NSF grants DMS-1412428 and DMS-1516400.}, T. Michaels$^*$\footnote{The research of this author was completed as part of a Ph.D dissertation} and E.B. Saff$^*$}
\maketitle

\begin{abstract}
There are many ways to generate a set of nodes on the sphere for use in a variety of problems in numerical analysis. We present a survey of quickly generated point sets on $\mathbb{S}^2$, examine their equidistribution properties, separation, covering, and mesh ratio constants and present a new point set, equal area icosahedral points, with low mesh ratio. We analyze numerically the leading order asymptotics for the Riesz and logarithmic potential energy for these configurations with total points $N<50,000$ and present some new conjectures.

\

\smallskip
\noindent \textbf{Keywords.} Spiral Nodes, Equal Area Nodes, Mesh Ratio, Riesz Energy, Low Discrepancy

\

\smallskip
\noindent 2000 Mathematics Subject Classification. Primary: 52C35 Secondary: 65D15
\end{abstract}

\newpage

\section{Introduction}
\subsection{Overview}
Distributing points on $\mathbb{S}^2:=\left\{x\in\mathbb{R}^3 : \ |x|=1\right\}$ is a classical problem arising in many settings in numerical analysis and approximation theory, in particular the study of radial basis functions, quadrature and polynomial interpolation, Quasi-Monte Carlo methods for graphics applications, finite element methods for PDE's, cosmic microwave background radiation modeling, crystallography and viral morphology, to name a few. The goal of this paper is to survey some widely used algorithms for the generation of spherical node sets. We will restrict our descriptions to ``popular" point sets, most of which can be generated ``reasonably fast." Namely, we study
\begin{itemize}
\item Fibonacci and generalized spiral nodes
\item projections of low discrepancy nodes from the unit square
\item zonal equal area nodes and HEALPix nodes
\item polygonal nodes such as icosahedral, cubed sphere, and octahedral nodes
\item minimal energy nodes 
\item maximal determinant nodes
\item random nodes and
\item ``mesh icosahedral equal area nodes." 
\end{itemize}
The last is a new point set devised to have many desirable properties. For each of the above configurations, we provide illustrations, and analyze several of their properties. We focus our attention primarily on
\begin{itemize}
\item equidistribution
\item separation
\item covering
\item quasi-uniformity and
\item Riesz potential energy.
\end{itemize}
For each property we provide numerical calculations, tables, and comparisons, and in some cases we prove theoretical bounds on the mesh ratio. Section 3 is devoted to asymptotic comparisons of various potential energies. We do not consider quadrature of the point sets; however, such a comparison for several of the configurations we describe here can be found in \cite{QMC}. We now formally introduce the properties we will be studying and, in Section 2, describe the point sets themselves. We leave the technical proofs to Section 4, and at the end of the document, we provide resources for Matlab source codes to generate the point sets.

\subsection{Definitions and Properties}

For low error numerical integration with respect to uniform surface area measure (\cite{ABD12} and \cite{KuipNied}) as well as in digitizing $\mathbb{S}^2$ for computer graphics purposes (\cite{signal} and \cite{FibInt}), it is important for any spherical configuration to have an approximately uniform distribution. A sequence $\left\{\omega_N\right\}_{N=1}^{\infty}$ of spherical point sets with $\omega_N$ having cardinality $N$ is called \textit{equidistributed} if the sequence of normalized counting measures, 
\[\nu_N(A):=\frac{1}{N}|A\cap\omega_N|,\ \ \ \ \ \ A\ \textup{Borel set},\]
 associated with the $\omega_N$'s converges in the weak-star sense to $\sigma$, the normalized surface area measure on $\mathbb{S}^2$, as $N\rightarrow \infty$. That is, for all continuous functions $f$ on $\mathbb{S}^2$,
 \[\lim_{N\to\infty}\int_{\mathbb{S}^2}f\,\text{d}\nu_N = \int_{\mathbb{S}^2}f\,\text{d}\sigma.\]
  An equivalent definition is that the $L_\infty$-spherical cap discrepancy
\[D_C(\omega_N):=\sup_{V\subset \mathbb{S}^2}\bigg|\frac{|V\cap \omega_N|}{N}-\sigma(V)\bigg|\to 0 ,\ \ \ \ \ \ \ N\to\infty,\]
where the supremum is taken over all spherical caps $V\subset \mathbb{S}^2$. 

For the study of local statistics, separation and covering properties play an important role. The \textit{separation} of a configuration $\omega_N\subset \mathbb{S}^2$ is

\[\delta(\omega_N):=\min_{\substack{x,y\in\omega_N\\ x\neq y}} |x-y|, \]
and a sequence of spherical $N$-point configurations is said to be \textit{well-separated} if for some $c>0$ and all $N\geq 2$,
\[\delta(\omega_N)\geq cN^{-1/2}.\]
The \textit{covering radius} of $\omega_N$ with respect to $\mathbb{S}^2$ is defined to be 

\[\eta(\omega_N):= \max_{y\in \mathbb{S}^2} \min_{x\in \omega_N} |x-y|,\]
and a sequence of spherical $N$-point configurations is a \textit{good-covering} if for some $C>0$ and all $N\geq 2$,
\[\eta(\omega_N)\leq CN^{-1/2}.\]
A sequence of configurations $\left \{\omega_N\right \}_{N=2}^\infty$ is said to be \textit{quasi-uniform} if the sequence

\[\bigg\{\gamma (\omega_N) := \frac{\eta (\omega_N)}{\delta (\omega_N)}\bigg\}_{N\geq 2}\]
is bounded as $N\rightarrow \infty.$ The quantity $\gamma(\omega_N)$ is called the \textit{mesh ratio} of $\omega_N$. Note that some authors define the mesh ratio as $2\gamma(\omega_N)$. A sequence of $N$-point configurations is quasi-uniform if it is well-separated and a good-covering. We remark that equidistribution does not imply quasi-uniformity or vice versa. In applications involving radial basis functions, ``1-bit" sensing, and finite element methods (\cite{RBF2}, \cite{RBF1}, \cite{RezSaff}, and \cite{FEM}), there is interest in precise bounds on $D_C(\omega_N)$, $\delta(\omega_N),\eta(\omega_N)$, and $\gamma(\omega_N)$. A trivial lower bound is $\gamma(\omega_N)\geq 1/2$ for any configuration. Asymptotically, as proved in \cite{BondHS}, 

\[\gamma(\omega_N) \geq \frac{1}{2\cos\pi/5} + o(1) = \frac{\sqrt{5}-1}{2} +o(1), \ \ \ \ N \rightarrow \infty,\]
for any sequence of configurations $\left \{\omega_N\right \}_{N=2}^\infty \subset \mathbb{S}^2$.

We also evaluate the potential energy of our point sets. The problem of minimizing point energies on the sphere dates to at least the beginning of the 20th century when Thomson put forth a model of the ground state configurations of electrons in \cite{Thoms}. Given a lower-semicontinuous, symmetric kernel $K:\mathbb{S}^2\times\mathbb{S}^2\rightarrow(-\infty,\infty]$, and a spherical configuration  $\omega_N\subset\mathbb{S}^2$, the $K$-energy of $\omega_N$ is defined to be

\begin{equation}
\text{E}_K(\omega_N):=\sum_{\substack{x,y\in\omega_N\\ x\neq y}} K(x,y).
\label{energy_def}
\end{equation}

\noindent The infimum of $\text{E}_K(\omega_N)$ over all $N$-point configurations on $\mathbb{S}^2$ is attained and is denoted by $\mathcal{E}_K(N)$. We will restrict our attention to the class of Riesz kernels defined by

\[\ \ \ \ \ \ \ \ \ \ \ 
K_s(x,y) = \frac{1}{|x-y|^s},\ \ \ \ \ \ \ \ s > 0\]
\[
K_{\text{log}}(x,y) = \text{log}\frac{1}{|x-y|},\]
\[\ \ \ \ \ \ \ \ \ \ \ 
K_s(x,y) = -|x-y|^{-s},\ \ \ \ \ \ \ \ s < 0.\]
For brevity, the energy and minimal energy quantities for the Riesz $s$-kernel and log kernel will be denoted by $\text{E}_s(\omega_N)$, $\text{E}_{\text{log}}(\omega_N)$, $\mathcal{E}_s(N)$, and $\mathcal{E}_{\text{log}}(N)$ respectively. Determining an exact minimal configuration for a fixed $N$ and $s$ is a highly nonlinear optimization problem. In practice, gradient descent and Newton methods are used to arrive at approximate global minima \cite{Calef}; however, there is substantial interest in generating nearly optimal points more quickly~\cite{Smale}.

The \textit{Voronoi cell} of a point $x\in\omega_N\subset\mathbb{S}^2$ is the spherical polygon
\[V_x(\omega_N):= \left\{y\in\mathbb{S}^2:\ |y-x| \leq |y-z|, \forall \ z\in\omega_N\setminus \left\{x\right\}\right\}.\] 
The \textit{Voronoi decomposition} of a configuration is
\[V(\omega_N):=\left\{V_x(\omega_N)\right\}_{x\in\omega_N}.\] It has been observed that the Voronoi cell decomposition of nearly optimal energy configurations appears to consist primarily of nearly regular spherical hexagons mixed with ``scars" of spherical heptagons and pentagons as shown in Figure \ref{MinEnVoronoi}. It has been conjectured that for $s>2$, the dominant term in the asymptotic expansion of $\mathcal{E}_s(N)$ is related to the Epstein-Zeta function for the hexagonal lattice (see Section 3). 

\begin{figure}[h!]
\centering
\includegraphics{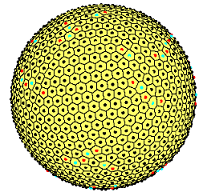}
\caption{Voronoi decomposition of approximately minimal energy nodes for $N=1000$ and $s=2$. Cells with black points are hexagonal, but not necessarily regular. Cells with red points are pentagonal and cells with cyan points are heptagonal (see online for color).}
\label{MinEnVoronoi}
\end{figure}

 We conclude this section by introducing some secondary properties of spherical configurations that we will use:
A partition $P_N := \left\{W_i\right\}_{i=1}^N$ of $\mathbb{S}^2$ into $N$ cells whose pairwise intersections have $\sigma$-measure $0$ is \textit{equal area} if $\sigma(W_i) = 1/N$ for all $1\leq i\leq N$. A sequence of partitions $\left \{P_N\right \}_{N=2}^\infty$ of $\mathbb{S}^2$ such that each $P_N$ has $N$ cells is \textit{diameter bounded} if there are constants $c,C > 0$ such that for all $N\in \mathbb{N}$ and for every cell $W^N_i\in P_N$,

\begin{equation}
cN^{-1/2}\leq \text{diam }W^N_i \leq CN^{-1/2},
\label{diambdd}
\end{equation}
\noindent where diam$(A):=\sup_{x,y\in A}|x-y|$. We will call a sequence of partitions of $\mathbb{S}^2$ \textit{asymptotically equal area} if 
\begin{equation}
\lim_{N\rightarrow\infty}N\max_{1\leq i\leq N} \sigma(W_i^{N}) = \lim_{N\rightarrow\infty}N\min_{1\leq i\leq N}\sigma(W_i^{N}) = 1,
\label{asympeq}
\end{equation}
\noindent and a sequence of spherical configurations $\left\{\omega_N\right\}_{N=1}^{\infty}$ will be said to be asymptotically equal area if its sequence of Voronoi decompositions is asymptotically equal area.

\section{Point Sets on $\mathbb{S}^2$}

\noindent\textbf{Generalized Spiral Points}

\

A spherical spiral on $\mathbb{S}^2$ is a path in spherical coordinates of the form 
\[r = 1,\ \ \ \ \ \theta = L\phi, \ \ \ \ \   0 \leq \phi \leq \pi,\]where $\phi$ denotes the polar angle and $\theta$ the azimuth.
Modifying a construction by Rakhmanov, Saff, and Zhou \cite{RSZ94}, Bauer \cite{Bauer} defines a sequence of $N$ points lying on a generating spherical spiral, $S_N$: 
\begin{equation}
L = \sqrt{N\pi}, \ \ \ h_k = 1 - \frac{2k-1}{N}, \ \ \ \phi_k = \cos^{-1}(h_k), \ \ \ \theta_k = L\phi_k, \ \ \ k=1,...,N.
\end{equation}

\noindent The slope $L$ is chosen such that for large $N$, the distance between adjacent points on the same level of the curve is similar to the distance between adjacent levels which differ by $2\pi$ in $\theta$. Indeed, the geodesic spacing between turns of the spiral is given by $2\pi/L = \sqrt{4\pi/N}$. Meanwhile, the total arc length is 

\[T = \int_{S_N} \sqrt{d\phi^2+d\theta^2\sin^2\phi} = \int_{0}^{\pi} \sqrt{1+L^2\sin^2\phi}\ d\phi = 2\sqrt{1+L^2}\textup{E}\big(L/\sqrt{1+L^2}\big),\]
where E$(\cdot)$ is the complete elliptic integral of the second kind. For large $N$,$\ \ T\approx~2L$, and the spiral is divided into nearly equal length segments of approximately $2L/N =\sqrt{4\pi/N}$. We refer to these points as the \textit{generalized spiral points}.
\      
\begin{theorem}
\label{spiral distribution}
 The sequence $\left\{\omega_N\right\}_{N=1}^{\infty}$ of generalized spiral point configurations is equidistributed on $\mathbb{S}^2$, quasi-uniform, and has the following asymptotic separation property:
 \begin{equation}
 \lim_{N\to \infty} \sqrt{N}\delta(\omega_N) = \sqrt{8-4\sqrt{3}\cos(\sqrt{2\pi}(1-\sqrt{3}))}\approx 3.131948....
 \label{spiralsep}
 \end{equation}
\end{theorem}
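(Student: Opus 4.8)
The plan is to prove the three assertions in sequence, since equidistribution is the easiest and the separation limit is the most delicate.

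\medskip

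\textbf{Equidistribution.} First I would show that the polar-angle coordinates $h_k = 1 - (2k-1)/N$ of the spiral points are precisely a midpoint-rule sampling of the uniform measure on $[-1,1]$. Since $\sigma$ pushed forward to the $h = \cos\phi$ coordinate is exactly $\tfrac12\,dh$ on $[-1,1]$, and the azimuthal angles $\theta_k = L\phi_k$ become equidistributed mod $2\pi$ for each fixed band of $h$-values (because the map $k \mapsto L\phi_k$ has derivative that is irrational-like and grows, so Weyl's criterion applies level by level), a standard argument gives weak-star convergence of $\nu_N$ to $\sigma$. Concretely, for a continuous $f$ on $\mathbb{S}^2$, writing $f$ in $(h,\theta)$ coordinates, $\frac1N\sum_k f(h_k,\theta_k)$ is a Riemann sum in $h$ of the averaged function $\bar f(h) = \frac{1}{2\pi}\int_0^{2\pi} f(h,\theta)\,d\theta$ up to an error controlled by the equidistribution of $\{L\phi_k\}$ within each thin $h$-slab, and that converges to $\frac12\int_{-1}^1 \bar f(h)\,dh = \int_{\mathbb{S}^2} f\,d\sigma$.

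\medskip

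\textbf{Quasi-uniformity.} I would estimate $\delta(\omega_N)$ and $\eta(\omega_N)$ both to be of order $N^{-1/2}$. The key geometric picture is that near a point $x_k$ away from the poles, the neighboring points are $x_{k\pm 1}$ (same turn of the spiral, at geodesic distance $\approx T/N \approx 2L/N = \sqrt{4\pi/N}$) and the points roughly $L$ indices away (adjacent turns, at distance $\approx 2\pi/L = \sqrt{4\pi/N}$). A careful expansion of $|x_k - x_j|$ using $h_k - h_j = 2(j-k)/N$ and $\theta_k - \theta_j = L(\phi_k - \phi_j)$ shows that for all relevant $j$ the distance is bounded below by $cN^{-1/2}$ and that every point of $\mathbb{S}^2$ is within $CN^{-1/2}$ of some $x_k$; the poles require a separate but easy check since the first/last few points cluster in a cap of radius $O(N^{-1/2})$. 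This yields $\gamma(\omega_N) = \eta/\delta$ bounded, hence quasi-uniformity; note equidistribution and quasi-uniformity together are consistent with, but do not imply, the sharp constant in \eqref{spiralsep}.

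\medskip

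\textbf{The asymptotic separation constant.} This is the hard part. The minimizing pair of indices $(k,j)$ realizing $\delta(\omega_N)$ will, for large $N$, be at a roughly fixed ``combinatorial offset'': writing $j = k + m$ for small $m$ and $j = k + \lfloor L\rfloor + n$ for the adjacent-turn competitors, one expands $\sqrt{N}|x_k - x_{k+m}|$ and $\sqrt{N}|x_k - x_{k+\lfloor L\rfloor + n}|$ in powers of $N^{-1/2}$. Using $\phi_k \approx \phi$ and $h_k \approx \cos\phi$, one finds $N|x_k - x_{k+m}|^2 \to$ a function of $\phi$, $m$, and the fractional part $\{L\phi/(2\pi)\}$ or similar; the limiting separation is the infimum over $\phi \in (0,\pi)$ and over the competing offsets of these limit expressions. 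The constant $\sqrt{8 - 4\sqrt3\cos(\sqrt{2\pi}(1-\sqrt3))}$ should emerge as the value at the worst latitude, where the squared chord distance limit takes the form $8 - 4\sqrt3\cos(\cdot)$ — the $\sqrt3$ and the argument $\sqrt{2\pi}(1-\sqrt3)$ strongly suggest that the extremal configuration locally looks like a sheared lattice in which the nearest competitor is two turns away rather than one (an offset of $2$ in the ``level'' count), producing the factor $\sqrt3$ from a $60^\circ$-type angle and the phase $\sqrt{2\pi}(1-\sqrt3)$ from the accumulated azimuthal drift $L(\phi_{k} - \phi_{k+2\lfloor L\rfloor})$. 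The main obstacle is rigorously controlling the interplay between the continuous latitude variable and the discrete, number-theoretic fractional parts $\{L\phi_k/(2\pi)\}$: one must show the $\liminf$ and $\limsup$ of $\sqrt N\,\delta(\omega_N)$ coincide, which requires that the relevant fractional parts are dense enough (equidistribution of $\{L\phi_k\}$, already used above) to approach the extremizing phase arbitrarily closely, while no accidental coincidence at another latitude or offset produces a smaller value. I would organize this as: (i) reduce to finitely many candidate offsets via the quasi-uniformity bounds; (ii) for each offset, compute the pointwise limit of $\sqrt N |x_k - x_j|$ as a function of $(\phi, \text{phase})$; (iii) minimize each over $\phi$ and phase; (iv) take the overall minimum and verify it equals the stated constant, and (v) use density of the phases to show this infimum is actually attained in the limit.
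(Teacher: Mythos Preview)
Your equidistribution and quasi-uniformity sketches are plausible, though the paper's route to equidistribution is different (it shows the Voronoi cells are asymptotically equal area on any fixed band $K_h$ and invokes a lemma about diameter-bounded asymptotically equal-area partitions, rather than using Weyl's criterion on the azimuths; note also that $L=\sqrt{N\pi}$ varies with $N$, so a Weyl-type argument needs some care).

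The substantive gap is in your derivation of the separation constant. You look for the minimizing pair at a generic latitude, with competitors on adjacent turns of the spiral (offsets of order $\lfloor L\rfloor$), and expect the constant to emerge from minimizing over latitude and a fractional ``phase'' via density arguments. That is not where the minimum lives. In the paper's proof the minimal separation is realized \emph{near the pole}, between the consecutive points $x_1$ and $x_2$. Writing $f_N(k)=\sqrt{N}\,|x_k-x_{k+1}|$ and expanding the spherical distance formula together with $\phi_k=\cos^{-1}\bigl(1-(2k-1)/N\bigr)\approx\sqrt{2(2k-1)/N}$, one gets
\[
\lim_{N\to\infty} f_N(k)=\sqrt{\,8k-4\sqrt{4k^2-1}\,\cos\!\bigl(\sqrt{2\pi}\,(\sqrt{2k-1}-\sqrt{2k+1})\bigr)\,},
\]
which is increasing in $k$; its value at $k=1$ is exactly $\sqrt{8-4\sqrt{3}\cos(\sqrt{2\pi}(1-\sqrt{3}))}$. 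The $\sqrt{3}$ is $\sqrt{4k^2-1}\big|_{k=1}$ and the phase $\sqrt{2\pi}(1-\sqrt{3})$ is $\sqrt{2\pi}(\sqrt{2k-1}-\sqrt{2k+1})\big|_{k=1}$, i.e.\ the azimuthal gap $L(\phi_1-\phi_2)$ --- not a two-level lattice offset at a generic latitude. Away from the poles the between-turn distance is $\sqrt{4\pi/N}\approx 3.545/\sqrt{N}$, which exceeds the polar value, so no phase/density argument is needed; one finishes by checking the finitely many pairs $k_1,k_2<8\pi+1/2$ near the pole case by case. Your steps (i)--(v) would never locate the true extremizer and, absent the polar analysis, would yield the wrong constant.
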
 
 
 \begin{figure}
 \centering
 \includegraphics{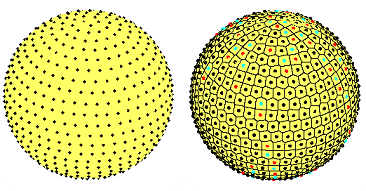}
 \caption{Plot of $N=700$ generalized spiral points and their Voronoi decomposition.}
 \label{spiral}
 \end{figure}
 
As shown in the proof of Theorem \ref{spiral distribution}, the Voronoi cells of $\omega_n$ are asymptotically equal area, but do not approach regular hexagons. Indeed, a typical decomposition is shown in Figure \ref{spiral}. A comparison of the mesh ratios for several values of $N$ is shown in Table \ref{tab:spiral}. Numerically, the mesh ratio appears to converge to 0.8099....

\begin{table}[h!]
\begin{center}
\caption{Mesh Ratios for Generalized Spiral Nodes}

\begin{tabular}{|c|c||c|c||c|c|}
\hline
$N$ & $\gamma(\omega_N)$ & $N$ & $\gamma(\omega_N)$& $N$ & $\gamma(\omega_N)$ \\
\hline\hline
10 & 0.897131& 400& 0.816007 & 20000& 0.809510\\
20 & 0.827821& 500& 0.810128 & 30000& 0.809629\\
30 & 0.814383& 1000& 0.805465& 40000& 0.809689\\
40 & 0.826281& 2000& 0.806411& 50000& 0.809725\\
50 & 0.834799& 3000& 0.807510& 100000& 0.809797\\
100& 0.803901& 4000& 0.808077& 200000& 0.809832\\
200& 0.806020& 5000& 0.808435& 300000& 0.809844\\
300& 0.809226& 10000& 0.809151& 500000& 0.809854\\

\hline
\end{tabular}
\label{tab:spiral}
\end{center}
\end{table}
\vspace{2cm}

\noindent\textbf{Fibonacci Nodes}

\

Another set of spiral points is modeled after nodes appearing in nature such as the seed distribution on the head of a sunflower or a pine cone, a phenomenon known as spiral phyllotaxis \cite{Dixon}. Coxeter \cite{Coxeter} demonstrated these arrangements are fundamentally related to the Fibonacci sequence, $\{F_k\} = \{1,1,2,3,5,8,13,...\}$ and the golden ratio $\varphi = (1+\sqrt{5})/2$. There are two similar definitions of the spherical point set in the literature. Both are defined as lattices on the square $[0,1)^2$ and then mapped to the sphere by the Lambert cylindrical equal area projection, denoted by $\Lambda$. In Cartesian coordinates, $\Lambda$ is defined by

\[\Lambda(x,y):=(\sqrt{1-(2y-1)^2}\cos 2\pi x,\sqrt{1-(2y-1)^2}\sin 2\pi x,2y-1)\]
and, in spherical coordinates, by 

\[\Lambda(x,y):=(\cos^{-1}(2y-1),2\pi x) = (\phi,\theta).\]
Define a rational lattice on $[0,1)^2$, with total points $F_k$ by

\begin{equation}
\widetilde{\omega}_{F_k} :=  \bigg(\bigg\{\frac{iF_{k-1}}{F_k}\bigg\},\frac{i}{F_k}\bigg),\ \ \ 0\leq i\leq F_k,
\label{FibLattice}
\end{equation}
\noindent where $\left\{x\right\} = x - \lfloor{x} \rfloor $ denotes the fractional part of $x$. On the other hand, an irrational lattice can be formed similarly for all values of total points $N$ by replacing $F_{k-1}/F_{k}$ in (\ref{FibLattice}) by $\lim_{k\rightarrow\infty}F_{k-1}/F_{k} = \varphi^{-1}$:

\[\widetilde{\omega}_{N} :=  \bigg(\left\{i\varphi^{-1}\right\},\frac{i}{N}\bigg),\ \ \ 0\leq i\leq N.\]

\begin{figure}
\centering
\includegraphics{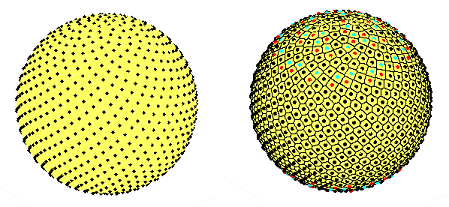}
\caption{Fibonacci nodes for $N=1001$ as defined by Swinbank and Purser and corresponding Voronoi decomposition. The visible spirals at each level are generated by a sequence of points whose index increases arithmetically by a Fibonacci number.}
\end{figure}

Swinbank and Purser \cite{SwinPurs} define a spherical point set for all odd integers $2N+1$ symmetrically across the equator derived from the irrational lattice with points shifted a half step away from the poles:

\[\ \theta_i = 2\pi i \varphi^{-1}, \ \ \sin \phi_i = \frac{2i}{2N+1},\ \ \ -N\leq i \leq N, \ \ \ -\pi/2 \leq \phi_i \leq \pi/2.\]
Denote $\omega_{2N+1}$ as the configuration generated above. Whereas for large $N$, the generalized spiral points tend towards flattening out and partitioning the sphere into distinct regions of latitude, the Fibonacci points maintain visible clockwise and counterclockwise spirals as $N$ grows. Labeling the points of $\omega_{2N+1}$ by increasing latitude, the dominant spirals emanating from $x_i\in \omega_{2N+1}$ are formed by the sequence $\left\{x_{i+jF_k}\right\}$ for some Fibonacci number $F_k$ and $j=\cdots,-2,-1,0,1,2,\cdots$.

The Fibonacci points derived from the rational lattice are studied by Aistleitner et al \cite{ABD12} and Bilyk et al \cite{Bilyk}  for discrepancy estimates. In \cite{ABD12}, the spherical cap discrepancy of the points $\left\{F_k\right\}$ is bounded by

\begin{figure}

\noindent\makebox[\textwidth][c]{
\resizebox{10cm}{!}{%
\begin{tikzpicture}

\node at (3,-1.1) [font=\bf] {y$_i$};
\node at (1.5,-.15) [font=\bf] {b$_2$};
\node at (5.5,-.35) [font=\bf] {b$_1$};
\node at (3.45,0.7) [font=\bf] {b$_5$};
\node at (3.7,0) [font=\bf] {b$_3$};
\node at (2.3,0.6) [font=\bf] {b$_4$};
\node at (2.7,2.3) [font=\bf] {b$_6$};


\draw (-4,-3)--(9,-3);
\draw (-4,-2.8)--(-4,-3.2);
\node at (-4,-3.5) {$0$};
\draw (9,-2.8)--(9,-3.2);
\node at (9,-3.5) {$2\pi$};

\draw (2.96,-2)--(7.63,-2);
\node at (5.5, -2.35) {$2\pi\varphi^{-1}$};
\draw (2.96,-1.85)--(2.96,-2.15);
\draw (7.63,-1.85)--(7.63,-2.15);

    \clip (-4,-2) rectangle (9cm,6cm);
    \pgftransformcm{.8506}{.5257}{-.6243}{.9056}{\pgfpoint{0cm}{0cm}}

   \coordinate (Bzero) at (2,-2);        
   \coordinate (Bone) at (6,-4);
   \coordinate (Btwo) at (0,0);
   \coordinate (Bthree) at (4,-2);
   \coordinate (Bfour) at (2,0);
   \coordinate (Bfive) at (4,0);
   \coordinate (Bsix) at (4,2);

    \foreach \x in {-6,-5,...,6}{
      \foreach \y in {-6,-5,...,6}{ 
        \node[draw,circle,inner sep=1.5pt] at (2*\x,2*\y) {};
         
      }
    }
    
        \draw [ultra thick,-latex,black] (Bzero)
            -- (Bone) node [above, font=\bf] {y$_{i+1}$};
		\draw [ultra thick,-latex,black] (Bzero)
		    -- (Btwo) node [above, font=\bf] {y$_{i+2}$};
		\draw [ultra thick,-latex,black] (Bzero)
		    -- (Bthree) node [above, font=\bf] {y$_{i+3}$};
		\draw [ultra thick,-latex,black] (Bzero)
		    -- (Bfour) node [above, font=\bf] {y$_{i+5}$};
		\draw [ultra thick,-latex,black] (Bzero)
		    -- (Bfive) node [above, font=\bf] {y$_{i+8}$};
		\draw [ultra thick,-latex,black] (Bzero)
		    -- (Bsix) node [above, font=\bf] {y$_{i+13}$};
\end{tikzpicture}}}
\caption{Irrational lattice points on the square with labeled basis vectors from \cite{SwinPurs} around a node $\textbf{y}_i$. The configuration is approximately a rotation of the rectangular lattice by the angle $\tan^{-1}(\varphi)$. Each line of points along a basis vector is mapped to a spiral under the Lambert projection, $\Lambda$. The basis vector $\textbf{b}_k$ such that $\Lambda(\textbf{b}_k)$ has the shortest length forms the visible dominant spirals emanating from $\Lambda(\textbf{y}_i)$. These shortest length vectors are determined by the total number of points and zone number~$z$.}
\label{fig:Fibbasis}
\end{figure}
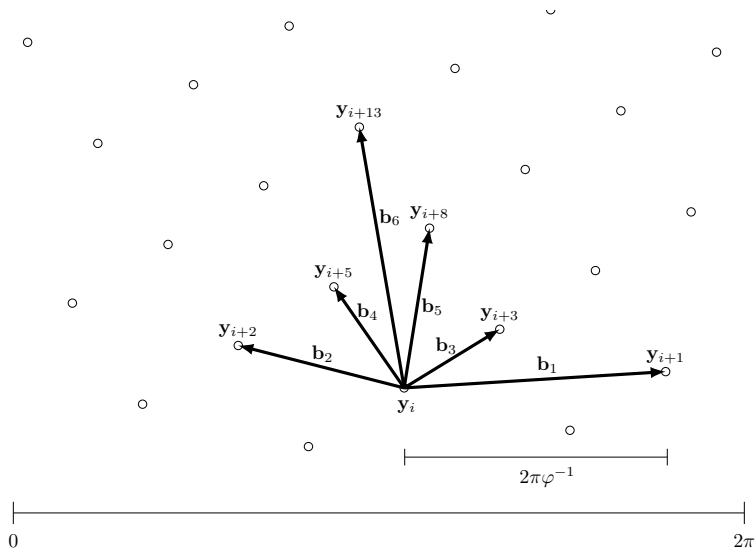

\[
D_C(\Lambda(\widetilde{\omega}_{F_k})) \leq \left\{
\begin{array}{lr}
44\sqrt{(2/F_k)}& \ \textup{if \textit{k} is odd,}\ \\
44\sqrt{(8/F_k)}& \ \textup{if \textit{k} is even.}
\end{array} \right.
\]

\noindent Numerical experiments in \cite{ABD12} suggest that in fact,

\[ D_C(\Lambda(\widetilde{\omega}_{F_k})) = O\bigg(\frac{(\log F_k)^c}{F_k^{3/4}}\bigg),\ \ \ \ k\rightarrow\infty \ \ \ \text{for some}\ \  1/2\leq c \leq 1.\]
which is optimal up to a log power \cite{Beck}. Both sequences of Fibonacci configurations are equidistributed. However, since the Swinbank and Purser nodes are defined for more values of total points, we will take these to be the Fibonacci sets moving forward. In \cite{SwinPurs}, these points are also numerically shown to be asymptotically equal area.

Analyzing $\omega_{2N+1}$ as a shifted irrational lattice mapped by the Lambert projection helps to visualize the underlying spiral structure. Define a system of basis vectors 
\[ \textbf{b}_k = \Lambda^{-1}(x_{i+F_k})-\Lambda^{-1}(x_i), \ \ \ \ k=1,2,3...,\]
 which are independent of base point $x_i$. This is illustrated in Figure \ref{fig:Fibbasis}. Emanating from each point $x_i$, the line of points $\left\{x_i+m\textbf{b}_k\right\}_{m=\dots ,-1,0,1,\cdots}$ is mapped to a spiral on $\mathbb{S}^2$ under the Lambert projection. Like the Fibonacci sequence, the basis vectors satisfy

\[\textbf{b}_{k+1} = \textbf{b}_k + \textbf{b}_{k-1}.\]
On the sphere, the basis vectors in terms of the local Cartesian coordinate system at a point $(\phi_i,\theta_i)\in\omega_{2N+1}$ have the form

\begin{equation}
\textbf{c}_{k,i} = \bigg( (-1)^k2\pi\cos\phi_i\varphi^{-k}, \frac{2F_k}{(2N+1)\cos\phi_i}\bigg).
\label{Fib basis}
\end{equation}
For a fixed latitude $\phi$ and total number of points $2N+1$, the zone number $z$ is defined by

\[\varphi^{2z} = (2N+1)\pi\sqrt{5}\cos^2\phi.\]
Letting $d = \sqrt{4\pi/(\sqrt{5}(2N+1))}$ and using the fact that for large $k$, $F_k \approx \varphi^k/\sqrt{5}$ equation (\ref{Fib basis}) can be rewritten

\begin{equation}
\label{Fib basis2}
\textbf{c}_{k,i} \approx d((-1)^k\varphi^{z-k}, \varphi^{k-z}).
\end{equation}

\begin{figure}
\centering
\includegraphics{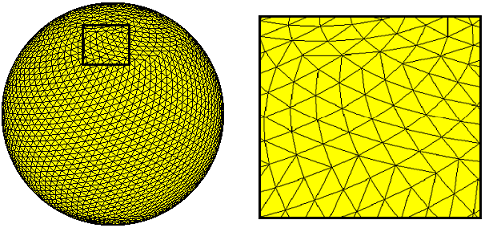}
\caption{Triangulation of $N=3001$ nodes viewed at a slightly different angle. The enlarged box demonstrates where zone number $z$ changes with changing $\phi$. The sudden shift occurs at $z=k\pm1/2$. Along this latitude, $\textbf{c}_{k,i}$ and $\textbf{c}_{k\pm 1,i}$ have equal lengths. For $k-1/2< z< k+1/2$, the triangulation consists of basis vectors $\textbf{c}_{k,i}$ and $\textbf{c}_{k+1,i}$ as proven in \cite{SwinPurs}.}
\label{fig:Fib tri}
\end{figure}

\noindent For latitudes where $k-1/2\leq z\leq k+1/2$, $\textbf{c}_{k,i}$ has the minimum length of the basis vectors around $x_i$ and forms the dominant spiral at those latitudes. As shown in \cite{SwinPurs}, $|\textbf{c}_{k,i}|$ is also the smallest distance between points near these latitudes. Thus, the Delaunay triangulation \cite{Delaunay} of $\omega_{2N+1}$ is composed of $\textbf{c}_{k,i}, \textbf{c}_{k-1,i}$, and $\textbf{c}_{k+1,i}$ when $k-1/2\leq z \leq k+1/2$. This is shown in Figure \ref{fig:Fib tri} and allows us to prove quasi-uniformity.

\begin{proposition}
\label{Fib mesh ratio}
The sequence of Fibonacci configurations is quasi-uniform.
\end{proposition}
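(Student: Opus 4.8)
The plan is to establish quasi-uniformity of the Fibonacci configurations $\omega_{2N+1}$ by obtaining matching upper and lower bounds of order $N^{-1/2}$ on the covering radius $\eta(\omega_{2N+1})$ and the separation $\delta(\omega_{2N+1})$, working latitude by latitude via the basis-vector analysis recalled in the excerpt. The starting point is the approximate local structure: near a point $x_i$ at latitude $\phi_i$, the Delaunay triangulation is generated (for $k-1/2 \le z \le k+1/2$, where $\varphi^{2z} = (2N+1)\pi\sqrt{5}\cos^2\phi_i$) by the three vectors $\mathbf{c}_{k,i}, \mathbf{c}_{k-1,i}, \mathbf{c}_{k+1,i}$, whose lengths in local coordinates are, up to the uniform factor $d = \sqrt{4\pi/(\sqrt 5 (2N+1))}$, given by $\sqrt{\varphi^{2(z-k)} + \varphi^{2(k-z)}}$ and its shifts. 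First I would make the approximations in~(\ref{Fib basis2}) rigorous: the error in replacing $F_k$ by $\varphi^k/\sqrt5$ is $O(\varphi^{-k})$, and the relevant $k$ satisfies $\varphi^k \asymp \sqrt{N}\cos\phi_i$, so the relative errors are controlled uniformly once $\cos\phi_i$ is bounded below, and separately one handles the polar caps where $k$ is small.

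The separation bound comes from minimizing $|\mathbf{c}_{k,i}|$ over the admissible range of $z$: since $t \mapsto \varphi^{2t} + \varphi^{-2t}$ is minimized at $t=0$ with value $2$ and the constraint $|z-k|\le 1/2$ forces the shortest generator to have squared length at least $d^2(\varphi^{2(z-k)}+\varphi^{2(k-z)})$ with $|z-k|\le 1/2$, one gets $\delta(\omega_{2N+1}) \ge c\, d \asymp c N^{-1/2}$ for an explicit $c>0$; the worst case $|z-k| = 1/2$ gives $\sqrt{\varphi + \varphi^{-1}}$. One must also check that the nearest neighbor really is one of the three Delaunay generators and that near the poles the points do not collide (they don't, because the $\theta_i$ are equispaced modulo the golden rotation and the $\sin\phi_i$ are equispaced). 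For the covering radius, within each Delaunay triangle the farthest point of $\mathbb{S}^2$ from the vertices is the circumcenter, whose circumradius is comparable to the longest edge, hence to $d\sqrt{\varphi^2 + \varphi^{-2}}$ at worst; summing the geometric bounds over all triangles and again treating the polar caps separately (where a handful of triangles meet and one bounds the cap radius directly from the spacing in $\phi$) yields $\eta(\omega_{2N+1}) \le C\, d \asymp C N^{-1/2}$. Dividing, $\gamma(\omega_{2N+1}) = \eta/\delta$ is bounded uniformly in $N$.

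The main obstacle I anticipate is the behavior near the poles, where the zone number $z$ is small, the asymptotic $F_k \approx \varphi^k/\sqrt5$ is inaccurate, and the clean lattice picture of Figure~\ref{fig:Fibbasis} degenerates — the Lambert projection $\Lambda$ distorts the polar caps, and one cannot simply quote the mid-latitude triangulation result of \cite{SwinPurs}. There I would argue directly: the innermost points have $|\sin\phi_i| = 2|i|/(2N+1)$ for small $|i|$, so they lie on rings of polar angle $\approx \pi/2 - 2|i|/(2N+1)$, i.e. at spherical distance $\Theta(N^{-1/2})$ from the pole for $|i| \asymp \sqrt N$ and the cap $|i| \lesssim \sqrt N$ contains $\Theta(N^{1/2})$ — wait, contains $\Theta(\sqrt N \cdot \text{?})$ points; more carefully the cap of radius $\rho$ about the north pole contains $\approx N\rho^2/4$ points, so one needs that these points, governed by the golden-angle rotation in $\theta$, are themselves well-separated and cover the cap — this is a one-dimensional sunflower/three-distance-theorem argument, and the three-distance theorem for the golden ratio gives exactly the uniform control needed. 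A secondary technical point is verifying that the transitions between zones (at $z = k \pm 1/2$) do not create anomalously short or long edges; since the two competing generators have equal length there, the triangulation is locally as regular as anywhere, so this is benign. Combining the mid-latitude estimate with the explicit polar-cap estimate gives the uniform mesh-ratio bound and completes the proof.
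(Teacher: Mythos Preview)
Your mid-latitude argument is essentially the paper's: bound the lengths of the local basis vectors $\mathbf{c}_{k,i}$ via the zone parameter $z$, using that the minimum of $\varphi^{2t}+\varphi^{-2t}$ on $|t|\le 1/2$ is $2$ (at $t=0$) and the maximum is $\varphi+\varphi^{-1}=\sqrt{5}$ (at $|t|=1/2$), so each edge of the Delaunay triangulation has length comparable to $d$. One small wording issue: the ``worst case'' for the \emph{lower} bound on $\delta$ is $z=k$, giving $|\mathbf{c}_{k,i}|=\sqrt{2}\,d$, not $|z-k|=1/2$; the value $\sqrt{\varphi+\varphi^{-1}}\,d$ is the worst (largest) shortest edge, relevant to the covering bound rather than the separation bound.

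Where you diverge from the paper is at the poles. You propose to abandon the basis-vector picture there and argue directly via a sunflower/three-distance-theorem analysis of the golden-angle rotation. The paper instead keeps the same framework: it observes that the approximation $F_k\approx\varphi^k/\sqrt{5}$ in (\ref{Fib basis2}) is the only thing that fails for small $k$, and simply reverts to the exact formula (\ref{Fib basis}) together with the zone constraint
\[
\frac{\varphi^{2k-1}}{(2N+1)\pi\sqrt{5}}\ \le\ \cos^2\phi\ \le\ \frac{\varphi^{2k+1}}{(2N+1)\pi\sqrt{5}},
\]
which immediately yields $|\mathbf{c}_{k,i}|^2\asymp 1/(2N+1)$ for every $k$ (and likewise for $\mathbf{c}_{k\pm 1,i}$). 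Since the triangulation result from \cite{SwinPurs} that the local Delaunay edges are $\mathbf{c}_{k-1,i},\mathbf{c}_{k,i},\mathbf{c}_{k+1,i}$ is stated for all zones, this closes the argument uniformly without any separate polar analysis. Your three-distance route would also work, but it is more labor than needed and your outline of it is visibly unfinished (the point count in the polar cap, and the passage from angular equidistribution to spherical separation, are left hanging); the paper's observation that one can just plug the exact $F_k$ into (\ref{Fib basis}) is the cleaner fix.
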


Numerically, the minimal separation appears to occur at the pole with value $|x_1-x_4| = |x_{2N+1}-x_{2N-2}|$ and the largest hole appears to occur in the triangles covering the poles, $\triangle x_2,x_3,x_5$ and $\triangle x_{2N},x_{2N-1},x_{2N-3}$. In a straightforward computation, it can be shown that 
\[\lim_{N\to\infty} \sqrt{2N+1}|x_1-x_4| = \sqrt{16-\sqrt{112}\cos(6\pi\varphi^{-1})} = 3.09207...\]
and the circumradius $r$ of the polar triangles satisfies
\[\lim_{N\to\infty}\sqrt{2N+1}\,r = 2.72812....\]
As shown in Table \ref{tab:Fib}, the mesh ratios for Fibonacci nodes appear to converge quickly to this ratio $\approx 0.882298$.

\begin{table}[h!]
\begin{center}
\caption{Mesh Ratios for Fibonacci Nodes}
\begin{tabular}{|c|c||c|c||c|c|}
\hline
$N$ & $\gamma(\omega_N)$ & $N$ & $\gamma(\omega_N)$& $N$ & $\gamma(\omega_N)$ \\
\hline\hline
11 & 0.859197& 401& 0.881897 & 20001& 0.882289\\
21 & 0.872632& 501& 0.881978 & 30001& 0.882292\\
31 & 0.876251& 1001& 0.882139& 40001& 0.882293\\
41 & 0.877909& 2001& 0.882218& 50001& 0.882294\\
51 & 0.878857& 3001& 0.882244& 100001& 0.882296\\
101& 0.880646& 4001& 0.882258& 200001& 0.882297\\
201& 0.881489& 5001& 0.882266& 300001& 0.882297\\
301& 0.881762& 10001& 0.882282& 500001& 0.882297\\

\hline
\end{tabular}

\label{tab:Fib}
\end{center}
\end{table}

\

\noindent \textbf{Low Discrepancy Nodes}

\

Another approach for distributing points on the sphere is to minimize a suitable notion of discrepancy, such as spherical cap, $L_p$, or generalized discrepancy (cf. \cite{BD15} and \cite{CF97}). A low spherical cap discrepancy sequence $\left\{\omega_N\right\}_{N=2}^{\infty}$ satisfies \cite{Beck}
\begin{equation}
\frac{a}{N^{3/4}}\leq D_C(\omega_N)\leq A\frac{\sqrt{\log N}}{N^{3/4}}, \ \ \ \ \ \ N\geq 2,
\end{equation}
\noindent for some $a,A>0$. Low discrepancy point sets are used in Quasi-Monte Carlo methods for numerical integration and also in graphics applications in \cite{WongLH}.
One method for generating spherical nodes is to first distribute points on the square~$[0,1)^2$ with low planar discrepancy \cite{Nied92}, i.e. for some $A>0$ 
\begin{equation}
D(\omega_N):=\sup_{R}\bigg|\frac{R\cap\omega_N}{N}-\sigma(R)\bigg|\leq A\frac{\log N}{N},\ \ \ \ \omega_N\subset[0,1)^2,
\label{disc}
\end{equation}  
\noindent where the supremum $R$ is taken over all rectangles with sides parallel to the axes. These sequences are then mapped to the sphere via the Lambert projection. While the $\log N/N$ term in (\ref{disc}) is optimal on the plane \cite{Schmidt}, it is an open problem whether the images of these sequences have optimal order spherical cap discrepancy. There are several such node distributions in the literature (cf. \cite{ABD12} and \cite{CF97}), but as their properties are similar, we only consider the following one. 

\

\noindent \underline{Hammersley Nodes}

\

For an integer $p \geq 2$, the $p$-adic van der Corput sequence is defined by

\[x^{(p)}_k = \frac{a_0}{p} + \dots + \frac{a_r}{p^{r+1}},\ \ \ \  \text{where}\ \ \ \  k = a_0 + \dots +a_rp^r,\ \ \ a_i\in\left\{0,1\right\}.\]
The Hammerlsy node set on the square (\cite{CF97}, \cite{Nied92}, and \cite{WongLH}) is given by $x_k :=  x^{(2)}_k$ and $y_k := \frac{2k-1}{2N}$. The $N$ point spherical Hammersley node set is given by $\{\Lambda(2\pi x_k,1-2y_k)\}^N_{k=1}$. The configuration for $N=1000$ is given in Figure \ref{Hamfig}. The discrepancy of the planar Hammersley nodes is known from Niederreiter~\cite{Nied92} to satisfy (\ref{disc}) . The sequence of Hammersley configurations is equidistributed; however it is not well-separated or quasi-uniform. This makes the nodes poor candidates for energy, as shown in Section 3. Their Voronoi decompositions also exhibit no discernible geometric patterns. 

\begin{figure}
\centering
\includegraphics{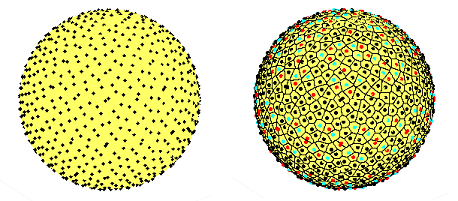}
\caption{Hammersley nodes for $N=1000$ and corresponding Voronoi decomposition.}
\label{Hamfig}
\end{figure}

\

\noindent\textbf{Equal Area Partitions}

\

Another class of point sets are those derived from equal area partitions of the sphere.

\vspace{2cm}

\noindent\underline{Zonal Equal Area Nodes}:

\

Rakhmanov et al \cite{RSZ94} construct a diameter bounded, equal area partition of $\mathbb{S}^2$ into rectilinear cells of the form
\[R([\tau_\phi,\nu_\phi]\times [\tau_\theta,\nu_\theta]):= \left\{(\phi,\theta)\in\mathbb{S}^2\ :\  \tau_\phi\leq\phi\leq\nu_\phi, \tau_\theta\leq\theta\leq\nu_\theta\right\}.\]
The cells are grouped by regions of equal latitude called collars that have the form $R([\tau_\phi,\nu_\phi]\times [0,2\pi])$. The cells are defined such that $\nu_\phi-\tau_\phi$ and $\nu_\theta-\tau_\theta$ approximate $\sqrt{4\pi/N}$ as $N$ grows. This ensures the correct order of the diameter bound. The cells are defined in the following way.

\

1. \textit{Determine the latitudes of the polar caps.} The first two cells are taken to be the polar caps of radius $\phi_c = \cos^{-1}(1-2/N).$

\

2. \textit{Determine the ideal collar angle and ideal number of collars.} The ideal angle between two collars is 
\[\delta_I := \sqrt{4\pi/N}.\]
 The ideal number of collars between the polar caps, all of which have angle $\delta_I$, is 
 \[n_I:=\frac{\pi-2\phi_c}{\delta_I}.\]

\begin{figure}
\label{ZonalPartition}
\centering
\includegraphics{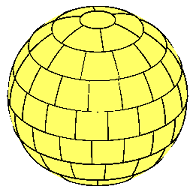}
\caption{Zonal equal area partition of the sphere into 100 cells. The algorithm does not determine a unique partition as the collars can rotate past each other. Image created with the help of Paul Leopardi's equal area partitioning toolbox available at eqsp.sourceforge.net.}
\end{figure}

3. \textit{Determine the actual number of collars, $n$.} If $N = 2,$ then $n := 0$. Otherwise, 
\[n := \max\left\{1,\text{round}(n_I)\right\}.\] 

4. \textit{Create a list of the ideal number of cells in each collar.} The ``fitting" collar angle is 
\[\delta_F := \frac{n_I}{n}\delta_I = \frac{\pi-2\phi_c}{n}.\]
Label the collars $\left\{C_j\right\}_{j=1}^{n+2}$ southward with the North polar cap as $C_1$ and the South polar cap as $C_{n+2}$. The area $A_j$ of collar $C_j$ can be written as the difference of polar cap areas:
\[A_j = 2\pi(\cos(\phi_c+(j-2)\delta_F)-\cos(\phi_c+(j-1)\delta_F)).\]
Thus the ideal number of cells $y_{j,I}$ in each collar $C_j$, $j\in\left\{2,\dots,n+1\right\}$, is given by
\[y_{j,I} = \frac{4\pi A_j}{N}.\]

5. \textit{Create a list of the actual number of cells in each collar.} We apply a cumulative rounding procedure. Letting $y_j$ be the number of cells in $C_j$, define the sequences $y$ and $a$ by $a_1:=0$, $y_1:=1$, and for $j\in \left\{2,\dots,n+1\right\}$:
\[y_j:=\text{round}(y_{j,I}+a_{j-1}),\ \ \ \ \ \ a_j:=\sum_{k=1}^{j}y_k-y_{k,I}.\]

6. \textit{Create a list of latitudes $\phi_j$ of each collar and partition each collar into cells.} We define $\phi_j$ as follows: $\phi_0 = 0$, $\phi_{n+2} = \pi$ and for $j\in \left\{1,\dots,n+1\right\}$, 
\[\phi_j = \cos^{-1}(1-\frac{2}{N}\sum_{k=1}^j y_k).\]
Thus the North polar cap of radius $\phi_j$ has normalized area $\sum_{k=1}^j y_k/N$, and $C_j:=R([\phi_{j-1},\phi_j]\times[0,2\pi])$. 

\

7. \textit{Partition each collar into cells.} $C_j$ has $y_j$ equal cells  \[\left\{R([\phi_{j-1},\phi_j]\times[\theta_j+k\frac{y_j}{2\pi},\theta_j+(k+1)\frac{y_j}{2\pi}])\right\}_{k=0}^{y_j-1},\] 
where $\theta_j\in[0,2\pi)$ can be chosen to be any starting angle. Note that because $\theta_j$ are chosen independently, the equal area partition determined by the algorithm is not unique. Indeed, the collars can rotate past each other without affecting the diameter bound or equal area property of the partition. Because the choice of the $\theta_j$'s does not strongly affect the other properties with which we are concerned, we will take them to be random.

\begin{figure}
\label{zonal}
\centering
\includegraphics{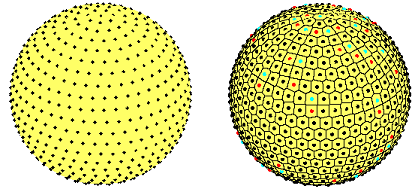}
\caption{Zonal equal area points for $N=700$ and corresponding Voronoi decomposition. The configurations have a similar structure to the generalized spiral points in Figure \ref{spiral}.}
\end{figure}

The point set $\omega_N$ is defined to be the centers of the cells of the rectilinear partition. As proved by Zhou \cite{Zhou}, the cells are diameter bounded from above by $7/\sqrt{N}$; however, numerical experiments from Leopardi in \cite{Leopardi} suggest the bound to be $6.5/\sqrt{N}$. For large $N$, the zonal equal area configurations look very similar to the generalized spiral configurations. Namely they exhibit iso-latitudinal rings with separation between adjacent points equal to separation between rings and a random longitudinal shift between points in adjacent rings. As shown in Section~3, the energy computations for both point sets are nearly identical. 

\begin{proposition}
The sequence of zonal equal area configurations is equidistributed and quasi-uniform.
\label{zonal quasi}
\end{proposition}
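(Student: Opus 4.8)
The plan is to establish the two properties separately, exploiting the explicit rectilinear structure of the zonal partition. For equidistribution, I would work directly with the definition via spherical cap discrepancy, or more simply with weak-star convergence of the counting measures. Since the partition $P_N = \{W_i\}_{i=1}^N$ is exactly equal area ($\sigma(W_i) = 1/N$ by construction) and diameter bounded (each cell has diameter at most $7/\sqrt N$ by Zhou's result, quoted above), the point $\omega_N$ consists of one representative from each cell, and for any continuous $f$ on $\mathbb{S}^2$ we have $\left| \int f \, d\nu_N - \int f \, d\sigma \right| = \left| \sum_i \left( \tfrac1N f(x_i) - \int_{W_i} f \, d\sigma \right) \right| \le \sum_i \int_{W_i} |f(x_i) - f(y)| \, d\sigma(y) \le \omega_f(7/\sqrt N)$, where $\omega_f$ is the modulus of continuity of $f$; this tends to $0$ as $N \to \infty$. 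So equidistribution is essentially immediate from diameter-boundedness plus equal area, and I would dispatch it in a couple of lines.

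The substance is quasi-uniformity, i.e. bounding $\gamma(\omega_N) = \eta(\omega_N)/\delta(\omega_N)$. For the covering radius: every point $y \in \mathbb{S}^2$ lies in some cell $W_i$, and the corresponding center $x_i$ satisfies $|y - x_i| \le \mathrm{diam}\, W_i \le 7/\sqrt N$, so $\eta(\omega_N) \le 7/\sqrt N$ directly. For separation, I need a lower bound $\delta(\omega_N) \ge c/\sqrt N$. Here I would argue that distinct centers $x_i, x_j$ are separated because their cells are disjoint cells of a diameter-bounded equal-area partition: a standard packing/volume argument shows that if $|x_i - x_j|$ were very small, the caps of radius $\tfrac12 \mathrm{diam}$ around the two centers would force the cells to overlap or one cell to have too small an area. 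More concretely, I would split into cases — two centers in the same collar (their $\theta$-coordinates differ by $2\pi/y_j$, and the collar has $\phi$-width comparable to $\delta_F \sim \sqrt{4\pi/N}$, with $y_j$ controlled by the ideal-cell count, so the chordal distance is $\gtrsim 1/\sqrt N$) versus centers in adjacent collars (their $\phi$-coordinates differ by roughly $\delta_F/2 \sim 1/\sqrt N$ since centers sit at collar midlatitudes) versus non-adjacent collars (distance only larger). One must be slightly careful near the poles, where the polar cap has radius $\phi_c = \cos^{-1}(1 - 2/N) \sim 2/\sqrt N$ and the center is the pole itself; the nearest neighbors lie on the first collar at polar angle $\sim \phi_c$, giving distance $\sim 2/\sqrt N$, which is still of the right order.

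The main obstacle is the separation bound, and specifically controlling the number of cells per collar $y_j$ and the collar widths uniformly in $N$ and $j$. The cumulative rounding in Step 5 means $y_j$ differs from the ideal value $y_{j,I} = 4\pi A_j / N$ by a bounded amount (the running error $a_j$ stays bounded — this is the point of the cumulative rounding and should be cited or verified), and $A_j$ is a difference of cap areas over a collar of angular width $\delta_F \asymp N^{-1/2}$, so $y_j \asymp \sqrt N \sin\phi_j$ up to bounded additive error; hence the azimuthal gap $2\pi/y_j \asymp N^{-1/2}/\sin\phi_j$, and the chordal separation within a collar, which scales like $\sin\phi_j \cdot (2\pi/y_j)$, is $\asymp N^{-1/2}$. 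The bookkeeping for collars very close to the poles (where $\sin\phi_j$ is small and $y_j$ may be $O(1)$) is the delicate part and is where I would spend the most care; everything else is routine. Combining $\eta \le 7/\sqrt N$ with $\delta \ge c/\sqrt N$ gives $\gamma(\omega_N) \le 7/c$, bounded, proving quasi-uniformity.
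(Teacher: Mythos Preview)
Your approach is essentially the paper's: equidistribution and covering come from the diameter-bounded equal-area structure (the paper packages this as Proposition~\ref{lemmaprop}), and separation is handled by the same within-collar vs.\ between-collar case split, with extra care near the poles. One refinement worth noting: the paper does not track the rounding errors $a_j$ or assume the collar widths equal $\delta_F$ (they do not---Step~6 defines the $\phi_j$ by cumulative area, not by the fitting angle); instead it quotes from Leopardi and Zhou the two-sided bound $c_1/\sqrt N \le \phi_{j+1}-\phi_j \le c_2/\sqrt N$ and combines it with the exact equal-area identity $(\cos\phi_j-\cos\phi_{j+1})/(2y_j)=1/N$ to get $|x_1-x_2|\ge 4\pi\sin\phi_j/\bigl(N(\cos\phi_j-\cos\phi_{j+1})\bigr)$, reducing the within-collar separation to showing $\sin\phi_j/(\cos\phi_j-\cos\phi_{j+1})\gtrsim \sqrt N$, which is then split into the $\phi_j\ge h$ and small-$\phi_j$ regimes exactly as you anticipate.
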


The above construction was modified by Bondarenko et al \cite{BondVia} to create a partition with geodesic boundaries for the creation of well-separated spherical designs. More details can be found in \cite{BondVia}. Table \ref{tab:Zonal} gives a comparison of the mesh ratios of the zonal points.

\begin{table}[h!]
\begin{center}
\caption{Mesh Ratios for Zonal Equal Area Nodes}
\begin{tabular}{|c|c||c|c||c|c|}
\hline
$N$ & $\gamma(\omega_N)$ & $N$ & $\gamma(\omega_N)$& $N$ & $\gamma(\omega_N)$ \\
\hline\hline
10 & 0.711934& 400& 0.769527 & 20000& 0.758100\\
20 & 0.790937& 500& 0.766808 & 30000& 0.758069\\
30 & 0.788546& 1000& 0.765356& 40000& 0.756793\\
40 & 0.843385& 2000& 0.764631& 50000& 0.756785\\
50 & 0.790252& 3000& 0.758645& 100000& 0.756770\\
100& 0.761296& 4000& 0.756510& 200000& 0.756762\\
200& 0.764846& 5000& 0.764217& 300000& 0.758015\\
300& 0.763188& 10000& 0.758192& 500000& 0.756757\\

\hline
\end{tabular}
\label{tab:Zonal}
\end{center}
\end{table}

\

\noindent\underline{HEALPix Nodes}

\

\begin{figure}
\centering
\includegraphics{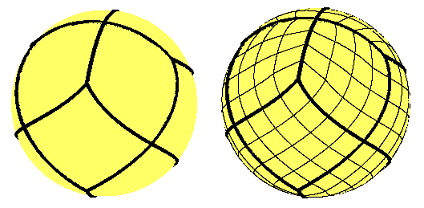}
\caption{Base tessellation of the sphere into 12 equal area pixels. A finer mesh is created by dividing each base pixel into a $k\times k$ grid of equal area subpixels of the same shape. The HEALPix points are taken to be the centers of the pixels.}
\label{HEALPix}
\end{figure}

 Developed by NASA for fast data analysis of the cosmic microwave background (CMB), the Hierarchical Equal Area iso-Latitude Pixelization (HEALPix) was designed to have three properties essential for computational efficiency in discretizing functions on $\mathbb{S}^2$ and processing large amounts of data \cite{HEALPix}:
 \begin{enumerate}
\item The sphere is hierarchically tessellated into curvilinear quadrilaterals.
\item The pixelization is an equal area partition of $\mathbb{S}^2$.
\item The point sets are distributed along fixed lines of latitude.
\end{enumerate}

To create the partition of $\mathbb{S}^2$, the authors in \cite{HEALPix} first divide the sphere into~12 equal area, four sided pixels defined by the following boundaries:

\[|\cos\phi|>\frac{2}{3}, \ \ \ \ \  \theta=m\frac{\pi}{2}, \ \ \ m = 0,1,2,3\]

\[\cos\phi=\frac{-2-4m}{3}+\frac{8\theta}{3\pi},\ \ \ \ \  \frac{m\pi}{2}\leq\theta\leq\frac{(m+1)\pi}{2}, \ \ \ \ \ m=0,1,2,3\]

\[\cos\phi=\frac{2-4m}{3}-\frac{8\theta}{3\pi},\ \ \ \ \  \frac{-(m+1)\pi}{2}\leq\theta\leq\frac{-m\pi}{2}, \ \ \ \ \ m=0,1,2,3.\]
The base tessellation is shown in Figure \ref{HEALPix}. For a given $k\in \mathbb{N}$, each pixel is partitioned into a $k \times k$ grid of sub-pixels of the same shape and equal area. The HEALPix point sets are taken to be the centers of these pixels. 

On the polar regions $|\cos\phi|>2/3$, the points are distributed along $k$ iso-latitudinal rings, indexed by $i$, each with $4i$ equally spaced points, indexed by~$j$:

\[|\cos\phi_i| = 1 - \frac{i^2}{3k^2}, \ \ \ \ \ \ \ \ \ \theta_j = \frac{\pi}{2i}\bigg(j-\frac{1}{2}\bigg).\]
On the equatorial region, there are $2k-1$ iso-latitudinal rings, each with $4k$ points. The rings are indexed by $k\leq |i|\leq 2k$ and the points by $1\leq j \leq 4k$:

\[|\cos\phi_i| = \frac{4}{3} - \frac{2i}{3k},\]

\[\theta_j = \frac{\pi}{2k}\bigg(j-\frac{s}{2}\bigg), \ \ \ \ \ s = (i-k+1)\mod{2}.\]
The index $s$ describes the phase shift between rings. This gives a configuration of size $N=12k^2$. The point sets are hierarchical along the subsequence $k=3^m$. Holho\c s and Ro\c sca \cite{HR15} have shown that the HEALPix points can be obtained as the image of points on a certain convex polyherdon under an area preserving mapping to the sphere.

\begin{figure}
\centering
\includegraphics{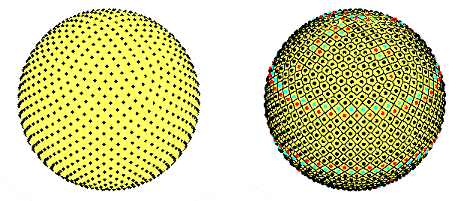}
\caption{HEALPix nodes and Voronoi decomposition for $N=1200, k=10$.}
\end{figure}

\begin{proposition}
The sequence of HEALPix configurations is equidistributed and quasi-uniform.
\label{HEALPix quasi}
\end{proposition}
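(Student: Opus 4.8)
The plan is to derive everything from the single geometric fact that the HEALPix partition $\{W_i^N\}$ is \emph{diameter bounded} in the sense of~(\ref{diambdd}): there exist constants $c,C>0$ with $cN^{-1/2}\le\operatorname{diam}W_i^N\le CN^{-1/2}$ for every pixel. Since $N=12k^2$ we have $N^{-1/2}=\Theta(1/k)$, so this reduces to showing that the latitudinal gap between consecutive HEALPix rings and the longitudinal (arc-length) gap between consecutive points on a ring are both $\Theta(1/k)$, uniformly over all rings. On a polar-cap ring one computes $\sin\phi_i=\frac{i}{\sqrt3\,k}\sqrt{2-i^2/(3k^2)}$, so the spacing $\frac{2\pi\sin\phi_i}{4i}$ of the $4i$ points on ring $i$ is $\Theta(1/k)$ \emph{independently} of $i$; and since $\cos\phi_i-\cos\phi_{i+1}=(2i+1)/(3k^2)$, the meridional gap $\phi_{i+1}-\phi_i\approx(\cos\phi_i-\cos\phi_{i+1})/\sin\phi_i$ is likewise $\Theta(1/k)$. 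On the equatorial belt the same computations apply, now with $\sin\phi_i\in[\sqrt5/3,1]$ and $4k$ points per ring. The two cases needing extra care are the first rings near each pole, where $\sin\phi_i\to0$ (there one checks directly that the four points of ring $1$ are mutually $\Theta(1/k)$ apart and $\Theta(1/k)$ away from ring $2$), and the interface $|\cos\phi|=2/3$ where cap pixels meet belt pixels (there the two ring families join continuously with matching $4k$-point spacing).

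Granting the upper diameter bound, equidistribution follows by a Riemann-sum estimate. Writing $\omega_N=\{x_i\}$ with $x_i$ the center of pixel $W_i$ (so $x_i\in W_i$) and using $\sigma(W_i)=1/N$, for any continuous $f$ on $\mathbb{S}^2$,
\[\Big|\tfrac1N\sum_i f(x_i)-\int_{\mathbb{S}^2}f\,d\sigma\Big|=\Big|\sum_i\int_{W_i}\!\big(f(x_i)-f(y)\big)\,d\sigma(y)\Big|\le\max_i\,\sup_{y\in W_i}|f(x_i)-f(y)|,\]
which tends to $0$ by the uniform continuity of $f$ because $\max_i\operatorname{diam}W_i\le CN^{-1/2}\to0$. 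Hence $\nu_N\to\sigma$ in the weak-$\star$ sense and the configurations are equidistributed.

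For quasi-uniformity, recall that it is implied by well-separation together with good covering. Good covering is immediate from the diameter bound: every $y\in\mathbb{S}^2$ lies in some $W_i$, so $\min_{x\in\omega_N}|y-x|\le|y-x_i|\le\operatorname{diam}W_i\le CN^{-1/2}$, giving $\eta(\omega_N)\le CN^{-1/2}$. For well-separation I would argue by cases on the two nearest points of $\omega_N$: if they lie on a common ring they are one arc-spacing $\Theta(1/k)$ apart; if they lie on adjacent rings, their chordal distance is at least a fixed multiple of the latitudinal gap $\phi_{i+1}-\phi_i=\Theta(1/k)$ (irrespective of the phase shift $s$ between the rings); rings farther apart only increase the distance; and the finitely many polar rings are covered by the direct estimate above. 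This yields $\delta(\omega_N)\ge cN^{-1/2}$, hence $\gamma(\omega_N)=\eta(\omega_N)/\delta(\omega_N)\le C/c$ is bounded.

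The main obstacle is thus the uniform two-sided control of the pixel diameters, and in particular checking that no degeneracy occurs at the poles or at the cap/belt interface $|\cos\phi|=2/3$; once that is in hand, equidistribution, covering, and separation all follow from the routine arguments above. As an alternative one might instead exploit the representation of the HEALPix nodes, due to Holho\c s and Ro\c sca~\cite{HR15}, as the image of a scaled lattice on a convex polyhedron under an area-preserving map onto $\mathbb{S}^2$, and verify that this map is bi-Lipschitz so that quasi-uniformity transfers from the flat lattice; but the direct ring-by-ring estimates seem more transparent.
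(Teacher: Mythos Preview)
Your proposal is correct and follows essentially the same route as the paper: equidistribution and covering are obtained from the equal-area partition's upper diameter bound (the paper packages this as an appeal to Proposition~\ref{lemmaprop}, whereas you write out the Riemann-sum estimate directly), and separation is handled by a case analysis over same-ring versus adjacent-ring neighbors in the polar and equatorial regions. The only substantive refinement in the paper's write-up is that the adjacent-ring case in the polar cap is split into an outer half $1\le i\le k/2$ (bounded below via $r_{i+1}-r_i$) and an inner half $k/2\le i\le k$ (bounded below via $\cos\phi_i-\cos\phi_{i+1}$), which makes uniform the $\Theta(1/k)$ latitudinal gap you assert; your heuristic $\phi_{i+1}-\phi_i\approx(\cos\phi_i-\cos\phi_{i+1})/\sin\phi_i$ already gives this once you note $\sin\phi_i\sim i/(\sqrt3\,k)$ and $\cos\phi_i-\cos\phi_{i+1}=(2i+1)/(3k^2)$, so no genuine gap remains.
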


\noindent Numerically, the mesh ratio appears to be bounded by 1, as shown in Table \ref{tab:HEALPix}.

\begin{table}[h!]
\begin{center}
\caption{Mesh Ratios for HEALPix nodes}
\begin{tabular}{|c|c|c||c|c|c||c|c|c|}
\hline
$k$ & $N$ & $\gamma(\omega_N)$ &$k$ & $N$ & $\gamma(\omega_N)$ &$k$ & $N$ & $\gamma(\omega_N)$\\
\hline\hline
1 & 12 & 0.864783 & 9& 972& 0.965950& 45& 24300& 0.992956\\
2& 48& 0.862243& 10& 1200& 0.969599& 50& 30000& 0.993648\\
3& 108& 0.909698 & 15& 2700& 0.979371& 60& 43200& 0.994701\\
4& 192& 0.929080& 20& 4800& 0.984328& 70& 58800& 0.995456\\
5& 300& 0.940016& 25& 7500& 0.987365& 80& 76800& 0.996020\\
6& 432& 0.951047& 30& 10800& 0.989509& 90& 97200& 0.996455\\
7& 588& 0.957584& 35& 14700& 0.990959& 100& 120000& 0.996807\\
8& 768& 0.961782& 40& 19200& 0.992082& 150& 270000& 0.997867\\

\hline
\end{tabular}
\label{tab:HEALPix}
\end{center}
\end{table}
 
 \
 
\noindent\textbf{Polyhedral Nodes and Area Preserving Maps}

\

Another class of point sets are those derived from subdividing regular polyhedra and applying radial projection: $\Pi(\textbf{x}) = \textbf{x}/\|\textbf{x}\|$ or an equal area projection. These node sets are used in finite element methods to give low error solutions to boundary value problems. See, for instance, \cite{GHWIcos} and \cite{NTLCube}.

\

\noindent\underline{Radial Icosahedral Nodes} 

\

This point set, as described in \cite{Teanby} and \cite{Wright} is formed by overlaying a regular triangular lattice onto each face of a regular icosahedron of circumradius $1$ and edge length $a = \csc(2\pi/5)$. Given $k\in \mathbb{N}$, for each vertex $v$, divide two adjacent edges emanating from $v$ into basis vectors of length $a/k$. For the face $\mathcal{F}$ determined by these edges and vertex, the icosahedral point set $\widetilde{\omega_{N_k}}$ on $\mathcal{F}$ is taken to be the set of lattice points generated by these basis vectors restricted to $\mathcal{F}$. The spherical points are $\omega_{N_k}:=\Pi(\widetilde{\omega_{N_k}})$. These node sets are defined for total points $N = 10k^2+2$ and hierarchical along the subsequence $k_j = k_02^j$ for any $k_0\in \mathbb{N}$.

The sequence of icosahedral configurations $\left\{\widetilde{\omega_{N_k}}\right\}_{k=1}^{\infty}$ is equidistributed. However, because radial projection is not area preserving, the sequence of spherical configurations is not equidistributed. Density is higher towards the vertices of the icosahedron and lower towards the center of the faces where the areal distortion of $\Pi$ is greatest. The Voronoi decomposition of $\omega_{N_k}$ is composed of twelve regular pentagons with all other cells regular hexagons of varying size as illustrated in Figure \ref{RadIcos}.

\begin{proposition}
\label{icosahedral distribution}
The sequence of radial icosahedral configurations is quasi-uniform.
\label{rad icos quasi}
\end{proposition}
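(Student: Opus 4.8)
\textbf{Proof proposal for Proposition \ref{rad icos quasi}.}

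The plan is to establish quasi-uniformity of $\omega_{N_k} = \Pi(\widetilde{\omega_{N_k}})$ by separately bounding the separation $\delta(\omega_{N_k})$ from below and the covering radius $\eta(\omega_{N_k})$ from above, both by constant multiples of $N_k^{-1/2}$, and then invoking the fact (stated in the definitions) that well-separated plus good-covering implies quasi-uniform. Since $N_k = 10k^2 + 2$, we have $N_k^{-1/2} \asymp 1/k$, so it suffices to show $\delta(\omega_{N_k}) \geq c/k$ and $\eta(\omega_{N_k}) \leq C/k$ for constants independent of $k$.

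First I would analyze the planar (pre-projection) configuration $\widetilde{\omega_{N_k}}$: on each triangular face it is a piece of the regular triangular lattice with spacing $a/k$ where $a = \csc(2\pi/5)$ is fixed, and the faces fit together consistently along shared edges (the lattice points on a common edge agree), so $\widetilde{\omega_{N_k}}$ is a well-defined subset of the icosahedral boundary surface. Its separation is exactly $a/k$ and its covering radius within that surface is the circumradius of a small triangle, $a/(k\sqrt{3})$; both scale like $1/k$. The key point is then that radial projection $\Pi$ from the icosahedral surface to $\mathbb{S}^2$ is bi-Lipschitz with constants that do \emph{not} depend on $k$: writing $\rho(\mathbf{x}) = \|\mathbf{x}\|$ for points on the polyhedral surface, $\rho$ ranges between the inradius and circumradius ($=1$) of the icosahedron, a fixed compact interval bounded away from $0$, and $\Pi$ restricted to this surface has derivative with operator norm and inverse operator norm bounded above and below by absolute constants. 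Consequently distances are distorted by at most a fixed multiplicative factor: $m\,|\mathbf{x}-\mathbf{y}| \leq |\Pi(\mathbf{x}) - \Pi(\mathbf{y})| \leq M\,|\mathbf{x}-\mathbf{y}|$ for all $\mathbf{x},\mathbf{y}$ on the surface, with $0 < m \leq M < \infty$ absolute. This immediately gives $\delta(\omega_{N_k}) \geq m\,a/k \geq c N_k^{-1/2}$.

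For the covering radius I would argue: any $q \in \mathbb{S}^2$ equals $\Pi(\mathbf{p})$ for a unique $\mathbf{p}$ on the icosahedral surface; choose a lattice point $\mathbf{x} \in \widetilde{\omega_{N_k}}$ on the same face as $\mathbf{p}$ (or an adjacent face) with $|\mathbf{p} - \mathbf{x}| \leq a/(k\sqrt 3) + O(1/k)$ — the $O(1/k)$ accounting for $\mathbf{p}$ lying near an edge or vertex, handled by passing to a neighboring face — and then $|q - \Pi(\mathbf{x})| = |\Pi(\mathbf{p}) - \Pi(\mathbf{x})| \leq M|\mathbf{p}-\mathbf{x}| \leq C/k \leq C' N_k^{-1/2}$. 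Combining the two bounds yields $\gamma(\omega_{N_k}) = \eta/\delta \leq C'/(c) $, a bound independent of $k$, which is quasi-uniformity.

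The main obstacle is making the bi-Lipschitz claim for $\Pi$ uniform and rigorous near the \emph{edges and vertices} of the icosahedron, where the polyhedral surface is not smooth and the projection's behavior must be controlled across the crease. I would handle this by covering the surface with finitely many overlapping coordinate patches (one generously-sized neighborhood per face, these already cover everything including a collar past each edge, since each face extended slightly still projects diffeomorphically onto its spherical image), computing the Lipschitz constants on each patch — a routine but finite computation since there are only finitely many congruent faces — and taking the worst constants; chordal distance comparisons across an edge then follow because the chord $|\mathbf{x}-\mathbf{y}|$ in $\mathbb{R}^3$ and the chord $|\Pi(\mathbf{x})-\Pi(\mathbf{y})|$ are compared directly rather than via surface geodesics, so non-smoothness of the crease causes no real difficulty. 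A secondary technical point is verifying that for each $\mathbf{p}$ on the surface there really is a lattice point of $\widetilde{\omega_{N_k}}$ within distance $O(1/k)$ even when $\mathbf{p}$ is in a face-corner region; this is where one uses that the restriction of the triangular lattice to each closed face includes the face's vertices and edge-subdivision points, so the union over all faces has no gaps of size exceeding a fixed multiple of $1/k$.
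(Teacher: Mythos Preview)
Your approach is essentially the same as the paper's: both prove quasi-uniformity by showing that the radial projection $\Pi$ from the icosahedral surface to $\mathbb{S}^2$ is bi-Lipschitz with constants independent of $k$, so that the separation and covering of the triangular lattice $\widetilde{\omega_{N_k}}$ (with spacing $a/k$ and mesh ratio $1/\sqrt{3}$) transfer to $\omega_{N_k}$. The only difference is in execution: the paper works face by face and computes the Lipschitz constants explicitly using $c=\min_{x\in\text{Icos}}|x|$ and an elementary trigonometric argument, whereas you argue the bi-Lipschitz bound abstractly via boundedness of the derivative on a compact domain and take more care to discuss the edge/vertex regions. Both arguments are valid; the paper's is shorter but tacitly uses (as you note) that nearby lattice points always share a face, while yours is more self-contained on that point.
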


\noindent Numerically, the mesh ratio appears to be bounded by $0.86$, as shown in Table~\ref{tab:RadIcos}.

 \begin{figure}
  \centering
  \includegraphics{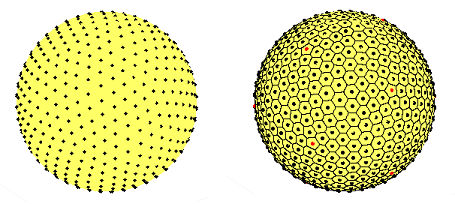}
  \caption{Radial icosahedral odes $N=642$. The Voronoi decomposition is composed of regular hexagons of varying size and 12 pentagons at the vertices of the icosahedron.}
  \label{RadIcos}
  \end{figure}

\begin{table}[h!]
\begin{center}
\caption{Mesh Ratios for Radial Icosahedral Nodes}
\begin{tabular}{|c|c|c|c|c|c|}
\hline
$k$ & $N$ & $\gamma(\omega_N)$& $k$ & $N$ & $\gamma(\omega_N)$\\
\hline\hline
1& 12&  0.620429 & 20& 4002& 0.830750\\
2& 42& 0.667597 & 30& 9002& 0.838066\\
3& 92& 0.684698 & 40& 16002& 0.842358\\
4& 162& 0.745348& 50& 25002& 0.844697\\
5& 252& 0.765157 & 60 & 36002& 0.846156\\
6& 362& 0.769854& 70 & 49002 & 0.847376\\
7& 492& 0.789179& 100& 100002 & 0.849390\\
10& 1002& 0.808024& 150& 225002 & 0.850941\\
15& 2252& 0.821504& 200& 400002 & 0.851745\\
\hline
\end{tabular}
\label{tab:RadIcos}
\end{center}
\end{table}

\vspace{2cm}

\noindent\underline{Cubed Sphere Nodes}

\

A similar method as above can be applied to the cube \cite{NTLCube}. A square $k\times k$ grid is placed on each face of the cube and radially projected to the sphere. A typical point set is shown in Figure \ref{Cubed Sphere}. The configurations are defined for $N = 6k^2 -12k +8$ and are hierarchical along the subsequence $k=k_02^m$. By an argument similar to that in Proposition \ref{icosahedral distribution}, the limiting distribution is not uniform, but the sequence of configurations is quasi-uniform. Numerically, the mesh ratio seems to quickly converge to 1, as shown in Table \ref{tab:Cube}.

\begin{figure}
\centering
\includegraphics{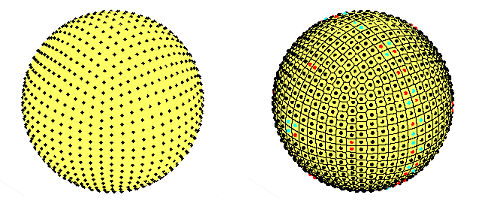}
\caption{Plot of $N=1016$ Cubed Sphere Points and Voronoi Decomposition. The Voronoi cells tend towards regular hexagons near the vertices of the cube. Towards the middle of each face they resemble a square lattice.}
\label{Cubed Sphere}
\end{figure}

\begin{table}[h!]
\begin{center}
\caption{Mesh Ratios for Cubed Sphere Points}
\begin{tabular}{|c|c|c||c|c|c||c|c|c|}
\hline
$k$ & $N$ & $\gamma(\omega_N)$ &$k$ & $N$ & $\gamma(\omega_N)$ &$k$ & $N$ & $\gamma(\omega_N)$\\
\hline\hline
2& 8& 0.827329& 10& 488& 0.996846& 50& 14408& 0.999893\\
3& 26& 0.794265& 15& 1178& 0.994025& 60& 20888& 0.999926\\
4& 56& 0.972885& 20& 2168& 0.999289& 70& 28568& 0.999946\\
5& 98& 0.933655& 25& 3458& 0.997954& 80& 37448& 0.999959\\
6& 152& 0.989913& 30& 5048& 0.999695& 90& 47528& 0.999968\\
7& 218& 0.968757& 35& 6938& 0.998979& 100& 58808& 0.999974\\
8& 296& 0.994805& 40& 9128& 0.999831& 150& 133208& 0.999988\\
9& 386& 0.982046& 45& 11618& 0.999390 & 200& 237608& 0.999994\\
\hline
\end{tabular}
\label{tab:Cube}
\end{center}
\end{table}

\noindent\underline{Octahedral Points}

\

 Unlike in the previous examples, the octahedral points, described by Holho\c s and Ro\c sca \cite{Oct}, are derived from an area preserving map $\mathcal{U}$ from the regular octahedron $\mathbb{K}$ of edge length $L = \sqrt{2\pi}/\sqrt[4]{3}$ and surface area $4\pi$ to $\mathbb{S}^2$. Let $\mathcal{U}_x, \mathcal{U}_y, $ and $\mathcal{U}_z$ denote the $x$,$y$, and $z$ components of $\mathcal{U}$ respectively. For $(X,Y,Z)\in\mathbb{K}$,

\[ \mathcal{U}_z = \frac{2Z}{L^2}(\sqrt{2}L-|Z|),\]

\[\mathcal{U}_x = \text{sgn}(X)\sqrt{1-\mathcal{U}_z^2}\cos\frac{\pi |Y|}{2(|X|+|Y|)},\]

\[\mathcal{U}_y = \text{sgn}(Y)\sqrt{1-\mathcal{U}_z^2}\sin\frac{\pi |Y|}{2(|X|+|Y|)}.\]

To produce a spherical point set, the authors form a partition $P_k$ of $k^2$ triangles on each face of the octahedron in the same manner as the radial icosahedral points and obtain an equal area spherical partition $\mathcal{P}_k = \mathcal{U}(P_k)$. The point sets~$\omega_{N_k}$ are taken to be the vertices of the triangles of $\mathcal{P}_k$. For a given~$k$, there are~$8k^2$ triangles and $N=4k^2+2$ points.

\begin{figure}
\centering
\includegraphics{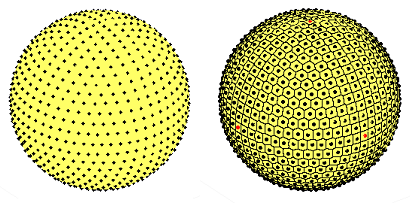}
\caption{Equal area octahedral points for $k=15$ and $N=902$. The Voronoi decomposition of the octahedral points is composed of hexagons and 8 squares at the vertices of the octahedron. The hexagons approach regularity towards the center of the faces and deform along the edges.}
\end{figure}

The octahedral configurations have similar properties to the HEALPix node sets. They are iso-latitudinal and hierarchical along the subsequence $k=k_02^m$. The sequence of configurations is equidistributed, and in addition, the authors in \cite{Oct} compute a diameter bound for any triangular region $\mathcal{T}$ of $\mathcal{P}_k$ to be
\[\textup{diam}\ \mathcal{T}\leq \frac{2\sqrt{4+\pi^2}}{\sqrt{8k^2}}\approx \frac{7.448}{\sqrt{8k^2}}.\] Following their proof of this bound, we can calculate a lower bound on the separation and an upper bound on the mesh norm. 

\begin{theorem}
\label{octahedral distribution}
The sequence of equal area octahedral configurations is quasi-uniform with 
\begin{equation}
\gamma(\omega_{4k^2+2})\leq \frac{1}{4} \sqrt{\frac{4+\pi^2}{2-(k+1)^2/k^2}}\rightarrow \frac{\sqrt{4+\pi^2}}{4}\approx 0.931048..., \ \ \ \ k\rightarrow\infty.
\label {Octmesh}
\end{equation}
\label{oct quasi}
\end{theorem}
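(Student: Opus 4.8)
The plan is to follow the strategy used in \cite{Oct} for the diameter bound and extract from it both a lower bound on the separation $\delta(\omega_{4k^2+2})$ and an upper bound on the covering radius $\eta(\omega_{4k^2+2})$, then combine the two. Since $\mathcal{U}$ is area preserving but not an isometry, the key quantitative input is control over how much $\mathcal{U}$ can contract and expand distances. The triangulation $P_k$ of each face of $\mathbb{K}$ into $k^2$ congruent small triangles has edge length $L/k$ and all its edges/vertices are mapped by $\mathcal{U}$ onto $\mathcal{P}_k$; the vertices of $\mathcal{P}_k$ are exactly $\omega_{N_k}$. So first I would recall (or re-derive, following \cite{Oct}) the Lipschitz-type estimates for $\mathcal{U}$ on $\mathbb{K}$: an upper bound $\|\mathcal{U}(p)-\mathcal{U}(q)\| \le \Lambda\, |p-q|$ giving the diameter bound $\mathrm{diam}\,\mathcal{T} \le 2\sqrt{4+\pi^2}/\sqrt{8k^2}$ (so $\Lambda = \sqrt{4+\pi^2}/L$ after plugging in $L=\sqrt{2\pi}/\sqrt[4]{3}$ and simplifying), and, crucially, a lower bound $\|\mathcal{U}(p)-\mathcal{U}(q)\| \ge \lambda_k\,|p-q|$ valid for pairs of points within a single small triangle or a pair of adjacent triangles. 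The covering-radius bound is then immediate: every point of $\mathbb{S}^2$ lies in some $\mathcal{U}$-image triangle $\mathcal{T}$, hence within $\mathrm{diam}\,\mathcal{T}$ of a vertex, so $\eta(\omega_{N_k}) \le 2\sqrt{4+\pi^2}/\sqrt{8k^2} = \sqrt{4+\pi^2}/\sqrt{2k^2}$.

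For the separation, I would argue that any two distinct points of $\omega_{N_k}$ are either vertices of a common small triangle (or two triangles sharing an edge/vertex), in which case the contraction bound gives $\|\mathcal{U}(p)-\mathcal{U}(q)\| \ge \lambda_k (L/k)$, or else they are far apart in $\mathbb{K}$ and the separation is controlled by a crude global lower bound. One must check that the minimum is attained by a ``local'' pair; this is where the geometry of $\mathcal{U}$ near the octahedron's edges and vertices needs to be examined, since $\mathcal{U}$ distorts most there. The target inequality $\gamma \le \tfrac14\sqrt{(4+\pi^2)/(2-(k+1)^2/k^2)}$ tells us exactly what is needed: comparing with $\eta \le \sqrt{4+\pi^2}/\sqrt{2k^2}$, we need $\delta(\omega_{N_k}) \ge \tfrac{4}{\sqrt{2k^2}}\sqrt{2-(k+1)^2/k^2}\big/\big(\text{something}\big)$; working backwards, the factor $2-(k+1)^2/k^2 = 1 - (2k+1)/k^2$ strongly suggests that the worst-case pair involves the $z$-component formula $\mathcal{U}_z = \tfrac{2Z}{L^2}(\sqrt2 L - |Z|)$ evaluated near a vertex, where $|Z|$ ranges up to roughly $\sqrt2 L \cdot (1 - 1/k)$ or similar, producing the $(k+1)^2/k^2$ term. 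So the concrete step is: identify the two vertices of $\omega_{N_k}$ nearest the "top" vertex of a face, compute $\|\mathcal{U}(p)-\mathcal{U}(q)\|$ exactly using the component formulas, and show this equals (asymptotically, a lower bound for) $\tfrac{4}{\sqrt{N_k}}\sqrt{2-(k+1)^2/k^2}$ up to the stated constant.

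Combining, $\gamma(\omega_{N_k}) = \eta/\delta \le \big(\sqrt{4+\pi^2}/\sqrt{2k^2}\big)\big/\big(\tfrac{4}{k}\sqrt{2-(k+1)^2/k^2}/\sqrt{2}\big)$, which after simplification is $\tfrac14\sqrt{(4+\pi^2)/(2-(k+1)^2/k^2)}$; letting $k\to\infty$ the radicand tends to $(4+\pi^2)/1$, giving the limit $\sqrt{4+\pi^2}/4 \approx 0.931048$. Quasi-uniformity follows since this bound is finite and bounded in $k$ (the denominator $2-(k+1)^2/k^2 \ge 2 - 4 = $ — careful: for $k=1$ it degenerates, so the bound is really meaningful for $k\ge 2$, and one notes $2-(k+1)^2/k^2 \to 1$ monotonically from below for $k\ge 2$, staying bounded away from $0$).

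I expect the main obstacle to be the separation lower bound: one must rigorously justify that the minimal distance in $\omega_{N_k}$ is realized by the specific polar pair near an octahedron vertex (rather than, say, by points straddling an edge of $\mathbb{K}$ where $\mathcal{U}$ has a corner), which requires a careful case analysis of $\|\mathcal{U}(p)-\mathcal{U}(q)\|$ over all adjacency types of small triangles — the kind of computation \cite{Oct} carries out for the diameter and which we would mirror for the minimum. The covering bound, by contrast, is essentially free from the diameter bound already proved in \cite{Oct}.
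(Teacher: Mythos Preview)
Your overall architecture matches the paper: the covering bound comes straight from the diameter bound of \cite{Oct}, and the separation bound comes from explicit computation of distances between images of lattice vertices. However, the paper does not pass through abstract Lipschitz constants for $\mathcal{U}$; it simply writes the mapped vertices $\mathcal{A}_{i,j}=\mathcal{U}(A_{i,j})$ in closed form and computes the three nearest-neighbor edge lengths $\|\mathcal{A}_{i+1,j}-\mathcal{A}_{i,j}\|$, $\|\mathcal{A}_{i,j+1}-\mathcal{A}_{i,j}\|$, and $\|\mathcal{A}_{i+1,j}-\mathcal{A}_{i,j+1}\|$ directly, then minimizes over $(i,j)$.

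The substantive gap is your identification of the critical pair. You conjecture that the minimum separation comes from two points nearest an octahedron vertex and that the factor $2-(k+1)^2/k^2$ arises from $\mathcal{U}_z$. In fact the minimum is attained by the \emph{iso-latitudinal} edge $\|\mathcal{A}_{i+1,j}-\mathcal{A}_{i,j+1}\|$ at the \emph{equator} ($i+j$ maximal): here both endpoints share the same $z$-coordinate, and one gets
\[
\|\mathcal{A}_{i+1,j}-\mathcal{A}_{i,j+1}\|^2 = 4\frac{(i+j+1)^2}{k^2}\Big(2-\frac{(i+j+1)^2}{k^2}\Big)\sin^2\frac{\pi}{4(i+j+1)},
\]
so the factor $2-(k+1)^2/k^2$ is the horizontal radius-squared term at $i+j=k$, not a vertical contraction near a vertex. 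The ``radial'' edge types are never the minimizers (they bottom out at $2/k^2$). If you pursued only the polar pair you would obtain a larger lower bound and miss the actual constant. A secondary issue: you bound $\eta$ by the full diameter $2\sqrt{4+\pi^2}/\sqrt{8k^2}$, but the paper uses $\eta\le\sqrt{(4+\pi^2)/(8k^2)}$, i.e.\ half that; with your covering bound the resulting mesh-ratio estimate would be $\tfrac12\sqrt{(4+\pi^2)/(2-(k+1)^2/k^2)}$, twice the claimed inequality.
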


\noindent This bound seems to be near optimal. As shown in Table \ref{tab:Oct}, the mesh ratio grows to at least $0.9235$.

\

\

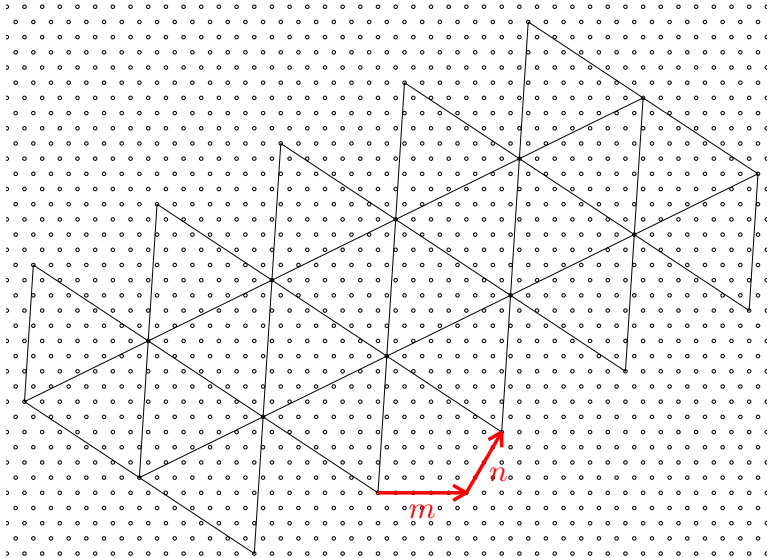
\begin{figure}
\noindent\makebox[\textwidth][c]{
\resizebox{10cm}{!}{%
\begin{tikzpicture}

    \clip (-4.5,-2) rectangle (17cm,14cm);
    \pgftransformcm{1}{0}{.5}{.866}{\pgfpoint{0cm}{0cm}}

  \foreach \x in {-25,-24,...,38}{
  \foreach \y in {-5,-4,...,40}{ 
  \node[draw,circle,inner sep=1pt, thick] at (1/2*\x,1/2*\y) {};
  }
  }

        \draw [thick] (-1,.5)--(11.5,10.5);
        \draw [thick] (-5.5,3)--(7,13);
        
         \draw [thick] (-5.5,3)--(3.5,-2);
         \draw [thick] (-7.5,7.5)--(6,0);
         \draw [thick] (-5,9.5)--(8.5,2);
         \draw [thick] (-2.5,11.5)--(11,4);
         \draw [thick] (0,13.5)--(13.5,6);
         
         \draw [thick] (-7.5,7.5)--(-5.5,3);
         \draw [thick] (-5,9.5)--(-1,.5);
         \draw [thick] (-2.5,11.5)--(3.5,-2);
         \draw [thick] (0,13.5)--(6,0);
         \draw [thick] (2.5,15.5)--(8.5,2);
         \draw [thick] (7,13)--(11,4);
         \draw [thick] (11.5,10.5)--(13.5,6);
         \draw [thick] (2.5,15.5)--(11.5,10.5);
         
         \draw [red, line width = .1cm] (6,0)--(8.5,0) node [pos = 0.5, scale = 2.5, red, very thick, below]{$m$};
         \draw [red, line width = .1cm] (8.5,0)--(8.5,2) node [pos = 0.3, scale = 2.5, red, very thick, right]{$n$};
         \draw [red, line width = .1cm] (8,.25)--(8.5,0);
         \draw [red, line width = .1cm] (8.25,-.25)--(8.5,0);
         \draw [red, line width = .1cm] (8.25,1.75)--(8.5,2);
         \draw [red, line width = .1cm] (8.75,1.5)--(8.5,2);
         
\end{tikzpicture}}}
\caption{Planar icosahedral mesh for $(m,n) = (5,4)$.}
\label{IcosMesh}
\end{figure}

\begin{table}[h!]
\begin{center}
\caption{Mesh Ratios for Octahedral Nodes}
\begin{tabular}{|c|c|c||c|c|c||c|c|c|}
\hline
$k$ & $N$ & $\gamma(\omega_N)$ &$k$ & $N$ & $\gamma(\omega_N)$ &$k$ & $N$ & $\gamma(\omega_N)$\\
\hline\hline
1 & 6 & 0.675511 & 9& 326& 0.873510& 60& 14402& 0.905758\\
2& 18& 0.872884& 10& 402& 0.875606& 70& 19602& 0.908047\\
3& 38& 0.854610& 15& 902& 0.882510& 80& 25602& 0.909875\\
4& 66& 0.856329& 20& 1602& 0.886310& 90& 32402& 0.911382\\
5& 102& 0.860536& 25& 2502& 0.888702& 100& 40002& 0.912644\\
6& 146& 0.864599& 30& 3602& 0.892884& 200& 160002& 0.919218\\
7& 198& 0.868095& 40& 6402& 0.898762& 300& 360002& 0.921947\\
8& 258& 0.871036& 50& 10002& 0.902784& 400& 640002& 0.923503\\

\hline
\end{tabular}
\label{tab:Oct}
\end{center}
\end{table}

\noindent\underline{Mesh Icosahedral Equal Area Points}

\

Improvements to the radially projected icosahedral points have been put forth by Song et al \cite{SongKim} and Tegmark \cite{Tegmark}. Here, we introduce two other improvements to these points to create new configurations. First, we generalize the icosahedral lattice structure to create configurations of more possible numbers of total points. Due to a method of Caspar and Klug \cite{CK62} derived during their investigation of the construction of viruses, we define a triangular lattice on a regular icosahedron with total points
\[N = 10(m^2+mn+n^2) + 2, \ \ \ \ \ \ \ \ (m,n)\in \mathbb{N}\times\mathbb{N}\setminus(0,0).\]

Consider the planar triangular lattice generated by $e_1 = (1,0)$ and $e_2 = (1/2,\sqrt{3}/2)$. For a given $(m,n)$, let $e_{m,n} = me_1+ne_2$ and it's rotation by~$\pi/3$ be basis vectors for an unfolded icosahedron superimposed on the lattice. This is illustrated in Figure \ref{IcosMesh}. Folding the icosahedron results in a triangular lattice~$\widetilde{\omega_{N}}$ on each face. Due to rotational symmetry of the lattice, the resulting configuration is independent of how the icosahedron is unfolded. The subsequence~$(m,0)$ produces the lattice for the radial icosahedral nodes.

\begin{figure}
\noindent\makebox[\textwidth][c]{
\begin{tikzpicture}

\draw (-0.5,-0.5)--(6.5,-0.5)--(3,5.562)--cycle;

\draw (15,0) arc (0:60:6cm);

\draw (12,5.196) arc(120:180:6cm);

\draw (9,0) arc(240:300:6cm);

\draw  (5.5,3.5) [->,very thick] arc(110:70:5);

\draw [red, very thick](3,1.732)--(3,0) node [pos = 0.5, red, left]{$h$};

\draw [red, very thick](3,0)--(4.5,0) node [pos = 0.5, red, below]{$w$};

\draw [red, very thick](4.5,0)--(3,1.732);

\draw (3,.3)--(3.3,.3)--(3.3,0);

\draw (4.5,0)--(5.73,0);

\draw [dashed, very thick](3,0)--(3,-0.5);

\draw [dashed, very thick](3,1.732)--(6.5,-0.5);

\node at (4.5,0) {\textbullet};

\node at (4.5,-0.2) {$p$};

\node at (3.5,.6) [red] {$\mathcal{T}$};

\draw (12,1.732) [red, very thick] arc (150:188.5:3cm);
\draw (11.63,-0.2) [dashed, thick] arc (188.5:200:3cm);

\draw (15,0) [dashed, thick] arc (30:90:3.48cm);

\draw (11.63,-0.2) [red,very thick] arc (265:287:5cm);
\draw (13.5,0) arc (287:301.25:5cm);

\draw (13.5,0) [red, very thick] arc (30:52:6cm);

\draw (11.9,1.5) arc (269:287:1cm);
\draw (14.5,0.7) arc (150:169:1cm);

\draw (11.61,0.1)--(11.93,0.1)--(11.93,-.2);

\node at (13.5,0) {\textbullet};
\node at (3,1.732) {\textbullet};
\node at (12,1.732) {\textbullet};
\node at (3,0) {\textbullet};
\node at (11.63,-0.2) {\textbullet};
\node at (5.73,0) {\textbullet};
\node at (14.65,0.48) {\textbullet};

\node at (7.25,4.2) {\Large $\Phi$};
\node at (13.5,-0.3) {$\Phi(p)$};
\node at (12.5,0.5) [red] {$\Phi(\mathcal{T})$};
\node at (3,2) {$\mathcal{O}$};
\node at (12,2) {$\Phi(\mathcal{O})$};
\node at (12.05,1.3) {$\lambda$};
\node at (2.7,0) {$p_{\bar{A}}$};
\node at (11,-0.2) {$\Phi(p_{\bar{A}})$};
\node at (5.7,-0.2) {$p_{\bar{B}}$};
\node at (14.2,0.6) {$\psi$};
\node at (14.35,0) {$\Phi(p_{\bar{B}})$};

\end{tikzpicture}}
\caption{Illustration of area preserving map $\Phi$ defined piecewise on each triangle bounded by two altitudes of a face with relevant variables labeled.}
\label{IcosMap}
\end{figure}
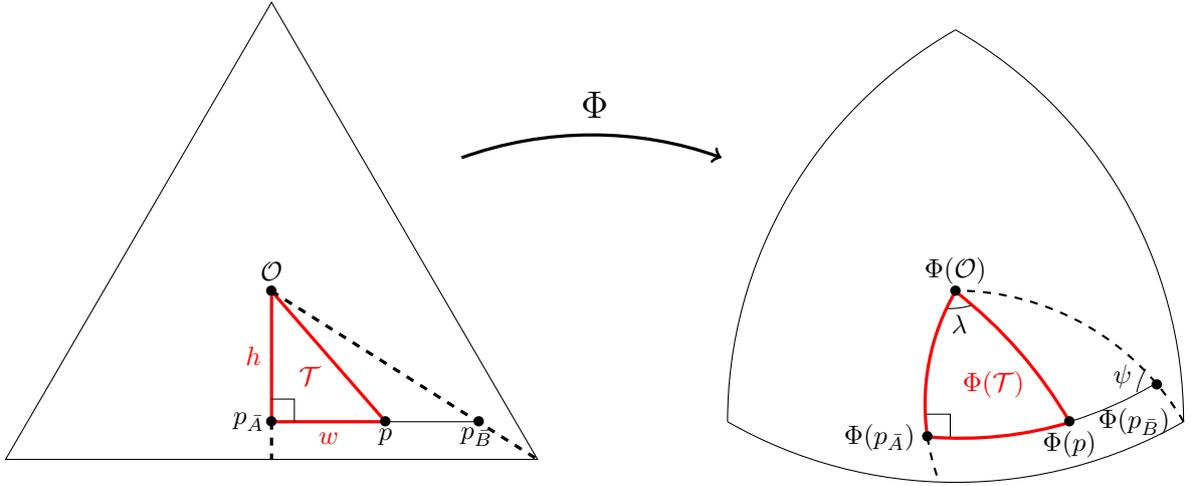

Secondly, we derive an area preserving map $\Phi$ from the regular icosahedron~$\mathcal{I}$ of edge length $L = \sqrt{4\pi}/\sqrt[4]{75}$, circumradius $r = L\sin2\pi/5$, and surface area~$4\pi$ using the technique presented by Snyder \cite{Snyder} for the truncated icosahedron. We define $\Phi$ piecewise by dividing each face $\mathcal{F}\subset \mathcal{I}$ into the six triangles $\mathcal{R}_i$ partitioned by the altitudes of $\mathcal{F}$:

\

1. Parametrize each point $p\in\mathcal{R}_i$ by $h$ and $w$ as labeled in Figure \ref{IcosMap}. If $\bar{A}$ is the side of $\mathcal{R}_i$ of length $L/2\sqrt{3}$, then $w$ is the distance from $p$ to $\bar{A}$ and $h$ is the distance of $p_{\bar{A}}:= proj(p,\bar{A})$ to $\mathcal{O}$, the center of $\mathcal{F}$.

\

2. Let $\bar{B}$ be the side of $\mathcal{R}_i$ of length $L/\sqrt{3}$ and $p_{\bar{B}}$ be the intersection of the line $\overline{p_{\bar{A}}p}$ with $\bar{B}$. For the triangle $\mathcal{S} = \triangle p_{\bar{A}}p_{\bar{B}}\mathcal{O}$, find $\psi$ as in Figure \ref{IcosMap} and spherical right triangle $\Phi(\mathcal{S})$ such that Area$(\mathcal{S}) = \sigma(\Phi(\mathcal{S})) = \psi - \pi/6$. Thus,

\[\psi = \frac{h^2\sqrt{3}}{2}+\frac{\pi}{6}.\]

3. $\Phi(p)$ will lie on the great circle $\Phi(\overline{p_{\bar{A}}p_{\bar{B}}})$. Letting $\mathcal{T} = \triangle p_{\bar{A}},p,\mathcal{O}$, find $\lambda$ as in Figure \ref{IcosMap} such that Area$(\mathcal{T}) = \sigma(\Phi(\mathcal{T}))$. By the spherical law of cosines, 

\[\sigma(\Phi(\mathcal{T})) = \lambda + \cos^{-1}\bigg(\frac{2\sin\lambda\cos\psi}{\sqrt{3}}\bigg)-\frac{\pi}{2}.\]
Thus

\begin{figure}
\centering
\includegraphics{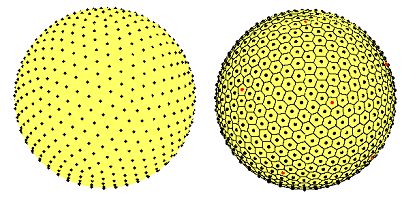}
\caption{Equal area mesh icosahedral nodes for $(m,n) = (7,2)$ and total points $N=672$. The Voronoi decomposition forms a spherical Goldberg polyhedron.}
\label{meshicos}
\end{figure}

\noindent \begin{table}[h!]

\begin{center}
\caption{Mesh Ratios for Equal Area Icosahedral Nodes}
\resizebox{\columnwidth}{!}{
 \begin{tabular}{|c|c|c||c|c|c||c|c|c|}
\hline
$(m,n)$ & $N$ & $\gamma(\omega_N)$ &$(m,n)$ & $N$ & $\gamma(\omega_N)$ &$(m,n)$ & $N$ & $\gamma(\omega_N)$\\
\hline\hline
(1,0) & 12 & 0.620429 & (7,1)& 572& 0.688031& (37,27)& 30972& 0.730733\\
(1,1)& 32 & 0.617964& (7,5)& 1092& 0.707058& (40,33)& 40092& 0.732326\\
(2,0)& 42& 0.667598&(12,4)& 2082& 0.706688& (42,40)& 50442& 0.732529\\
(2,1)& 72& 0.657081&(16,3)& 3132& 0.704123& (65,50)& 99752& 0.733719\\
(3,0)& 92& 0.659610&(16,7)& 4172& 0.717067& (90,75)& 204752& 0.735013\\
(3,1)& 132& 0.668227&(19,6)& 5112& 0.712681& (100,100)& 300002& 0.735397\\
(4,1)& 212& 0.680153& (19,18)& 10272& 0.726243& (131,100)& 402612& 0.735592\\
(5,2)& 392& 0.687368& (31,21)& 20532& 0.728761& (145,115)& 509252& 0.735965\\

\hline

\end{tabular}}
\label{tab:MeshIcos}
\end{center}

\end{table}

\[\tan\lambda = \frac{\sin(\frac{hw}{2})}{\cos(\frac{hw}{2})-\frac{2\cos\psi}{\sqrt{3}}}.\]

4. Transform $(\psi,\lambda)$ into spherical coordinates.

\

\noindent The map $\Phi$ is extended to $\mathcal{I}$ by rotations and reflections. This defines the unique azimuthal equal area projection from $\mathcal{I}$ onto $\mathbb{S}^2$ \cite{Mapbook}. The spherical configurations are $\omega_N :=\Phi(\widetilde{\omega_{N}})$. A typical point set is shown in Figure \ref{meshicos}. The Voronoi cells are almost regular hexagons with 12 pentagonal cells at the vertices of $\mathcal{I}$, and the Voronoi decomposition forms a spherical Goldberg Polyhedron \cite{Goldberg}. To implement the points in Matlab, we derive explicit formulas on a triangular face. More details are provided in the companion paper \cite{Michaels}.

Unlike the radial icosahedral points, the sequence of equal area configurations is equidistributed. Regarding quasi-uniformity, the following is proved in~\cite{Michaels}.
\begin{proposition}
The sequence of equal area icosahedral configurations are quasi-uniform with
\[\gamma(\omega_N)\leq 0.798....\] 
\end{proposition}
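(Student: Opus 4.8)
The plan is to pull the problem back to the planar triangular lattice through the area-preserving map $\Phi$ and then to control the metric distortion of $\Phi$. Since $\Phi$ is extended to all of $\mathcal{I}$ by rotations and reflections, and the unfolded lattice $\widetilde{\omega_N}$ is invariant under exactly those symmetries, it suffices to work on one face $\mathcal{F}$ of $\mathcal{I}$ — indeed on one of the six altitude-triangles $\mathcal{R}_i$ together with its neighbours across the altitude seams and across the edges of $\mathcal{I}$. On a face, $\widetilde{\omega_N}$ is precisely the vertex set of a tiling of $\mathcal{F}$ by $m^2+mn+n^2=(N-2)/10$ small equilateral triangles of side $a=L/\sqrt{m^2+mn+n^2}=L\sqrt{10/(N-2)}$; hence its intrinsic separation is $a$ and its intrinsic covering radius (the circumradius of the small triangles) is $a/\sqrt3$, and these two facts persist across the edges of $\mathcal{I}$ and around its twelve vertices, where five small triangles of side $a$ meet.

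Next I would estimate the distortion of $\Phi$. Writing $\sigma_1(p)\ge\sigma_2(p)>0$ for the singular values of $d\Phi_p$ on the interior of each $\mathcal{R}_i$, area preservation forces $\sigma_1(p)\,\sigma_2(p)=1$ at every such $p$, so a single uniform bound $\sigma_2(p)\ge\mu$ (equivalently $\sigma_1(p)\le 1/\mu$) simultaneously controls how much $\Phi$ can contract and how much it can expand. Computing $d\Phi$ from the explicit formulas $\psi=\frac{\sqrt3}{2}h^2+\frac{\pi}{6}$ and $\tan\lambda=\sin(hw/2)\big/\big(\cos(hw/2)-\frac{2}{\sqrt3}\cos\psi\big)$ in the $(h,w)$ chart and maximising $\sigma_1$ over the compact face, the extreme angular stretching is at the vertices of $\mathcal{I}$, where five lattice sectors of total angle $5\cdot 60^{\circ}$ open out to $360^{\circ}$ and one gets a clean local factor $\sqrt{6/5}\approx 1.095$; one then must check that nowhere on $\mathcal{F}$ does $\sigma_1$ exceed the threshold $\sigma_1^{\max}\le(\sqrt3\cdot 0.798)^{1/2}\approx 1.176$ needed below.

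For the conclusion, note that $\Phi$ is a bijection and that any path on $\mathcal{I}$ of length $\ell$ has image of length between $\mu\ell$ and $\ell/\mu$ (summing over the finitely many smooth pieces it crosses), so $\mu\,d_{\mathcal{I}}(x,y)\le d_{\mathbb{S}^2}(\Phi x,\Phi y)\le d_{\mathcal{I}}(x,y)/\mu$ for all $x,y$. Passing to chordal distances, which agree with geodesic distances to within a factor $1+O(a^2)$ for points at distance $O(a)$, yields $\delta(\omega_N)\ge\mu a\,(1-o(1))$ from the minimal lattice distance $a$, and $\eta(\omega_N)\le a/(\mu\sqrt3)$ from the planar covering radius $a/\sqrt3$. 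Dividing and using $\sigma_1\sigma_2\equiv 1$,
\[
\gamma(\omega_N)=\frac{\eta(\omega_N)}{\delta(\omega_N)}\le\frac{1}{\mu^{2}\sqrt3}\,(1+o(1))=\frac{(\sigma_1^{\max})^{2}}{\sqrt3}\,(1+o(1))\le 0.798+o(1),
\]
which already gives quasi-uniformity for any $\mu>0$; the remaining small values of $N$ are then dispatched by direct evaluation of the coordinates, as in Table~\ref{tab:MeshIcos}.

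The hard part is twofold. First, $\Phi$ is only piecewise $C^1$ — it has creases along the altitudes of each face and along the edges of $\mathcal{I}$ — so the length-comparison argument must be carried out with paths that cross these creases, and one must rule out a shortest spherical geodesic between two images escaping the image of a single face; this is automatic for near-neighbour pairs, which are the only ones affecting $\delta$ and $\eta$, but it has to be said explicitly. Second, and more seriously, pinning the constant $0.798$ rather than merely "bounded" rests entirely on the inequality $\sigma_1^{\max}\le 1.176$, and $\sigma_1$ must be extracted from the implicit trigonometric description of $(\psi,\lambda)$ rather than from a closed form: the vertex value $\sqrt{6/5}$ sits comfortably under the threshold, but showing that the interiors of the triangles $\mathcal{R}_i$ never do worse is where the real estimation effort goes.
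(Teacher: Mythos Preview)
The paper does not actually prove this proposition; it defers the argument to the companion paper~\cite{Michaels}. So there is no in-paper proof to compare against directly. Your bi-Lipschitz framework is very much in the spirit of how the paper handles the radial icosahedral case (Proposition~\ref{rad icos quasi}): push the known mesh ratio $1/\sqrt3$ of the planar triangular lattice through the distortion bounds of the map. With area preservation giving $\sigma_1\sigma_2\equiv 1$, your inequality $\gamma(\omega_N)\le(\sigma_1^{\max})^2/\sqrt3$ is correct and immediately yields quasi-uniformity for any finite $\sigma_1^{\max}$. Your vertex computation $\sigma_1=\sqrt{6/5}$ is also right, and if that were the global maximum you would in fact get $\gamma\le 6/(5\sqrt3)\approx 0.693$, comfortably below $0.798$.

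The genuine gap is precisely the one you flag yourself: you never establish that $\sigma_1^{\max}\le 1.176$ on the whole face. The edges of $\mathcal{I}$ and the interior of each $\mathcal{R}_i$ could well carry larger distortion than the vertices (the map must straighten the dihedral creases onto the sphere), and you give no estimate there. Without that bound the argument proves quasi-uniformity but not the constant. More seriously, the global bi-Lipschitz bound is inherently lossy: it pretends the minimum separation suffers the worst contraction while the largest hole suffers the worst expansion, at possibly different locations. If $\sigma_1^{\max}$ turns out to exceed $1.176$ anywhere, the crude bound fails to deliver $0.798$, and a sharper argument---tracking where on the face the extremal lattice edges and holes actually land, as the companion paper presumably does---becomes unavoidable. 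So your outline is a valid proof of quasi-uniformity, but the specific constant rests on a computation you have not carried out and whose outcome is not obvious.
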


\newpage

As shown in Table \ref{tab:MeshIcos}, the mesh ratios appears to stay below $0.736$. These are the lowest mesh ratios of all point sets discussed.

\

\noindent \textbf{Coulomb Points and Log Energy Points (Elliptic Fekete Points)}

\

\begin{figure}
\centering
\includegraphics{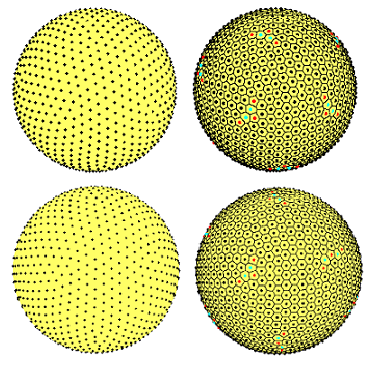}
\caption{Coulomb points (top) and log points (bottom) and their Voronoi decomposition for $N=1024$. The structure is very similar for both.}
\label{Coulomb}
\end{figure}

For $s=1$, Riesz $s$-energy minimization is the classic Thomson problem for the Coulomb potential \cite{Thoms}. The sequence of minimal Coulomb energy configurations is known to be equidistributed, well-separated, and quasi-uniform \cite{Dahlberg}. However, no explicit bound is known for the mesh ratio. The Voronoi decomposition of these cells, as shown in Figure \ref{Coulomb}, primarily consists of close to regular hexagons with heptagons and pentagons forming scars along the sphere. For relatively small $N$, the scars grow out from the 12 vertices of the icosahedron like dislocations in a crystal due to displacement deformities. For $N >~5,000$, the scars become less fixed, spreading across the sphere. For an in-depth discussion of the scarring behavior, see Bowick et al \cite{Bow2002} and \cite{Bow2009}.

The log energy points are minimizers of the Riesz logarithmic potential. The sequence of log energy configurations is known to be equidistributed and well-separated, but covering and quasi-uniformity is an open problem. As shown in Table \ref{tab:CoulLog} below, numerically the log energy points appear to be quasi-uniform. The best known lower bound on separation is due to Dragnev \cite{logsep}:

\[\delta(\omega^{\log}_N)\geq\frac{2}{\sqrt{N}},\ \ \ N\geq 2.\]
 Their geometric structure is very similar to the Coulomb points as shown in Figure \ref{Coulomb}. The energies of log and Coulomb points have the same asymptotic behavior in the dominant and second order term for many Riesz potentials. See Section 3 and Conjecture \ref{newconj}.

Generating these points is a highly nonlinear optimization problem. Unlike the configurations we have described up to now, they are not so quickly obtained. Table \ref{tab:CoulLog} displays the mesh ratios of near minimal Coulomb and log energy configurations. We remark that the sequence appears to have outliers at several values of $N$, such as $N = 20, 300,$ and $4096$. Points for $N<500$ and $N = k^2$, $k \leq 150$, were provided by Rob Womersley.

\begin{figure}
\centering
\includegraphics{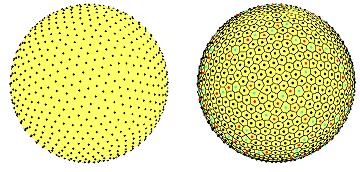}
\caption{Maximal determinant nodes for $N=961$. The Voronoi decomposition is primarily composed of regular hexagons with scarring features similar to the minimal energy points.}
\label{Maxdet}
\end{figure}

\begin{table}[h!]
\begin{center}
\caption{Mesh Ratios for Coulomb and Log Energy Points}
\begin{tabular}{|c|c|c||c|c|c|}
\hline
$N$ & $\gamma(\omega^{\text{log}}_N)$& $\gamma(\omega^{\text{Coul}}_N)$ & $N$ & $\gamma(\omega^{\text{log}}_N)$ & $\gamma(\omega^{\text{Coul}}_N)$ \\
\hline\hline
10 & 0.687401& 0.689279& 500&  0.757354& 0.755834\\
20 & 0.731613& 0.733265& 1024& 0.752122& 0.755770\\
30 & 0.695481& 0.692966& 2025& 0.761261& 0.766218\\
40 & 0.669531& 0.670842& 3025& 0.765075& 0.761661\\
50 & 0.661301& 0.656591& 4096& 0.770240& 0.765712\\
100& 0.695371& 0.694604& 5041& 0.753573& 0.758457\\
200& 0.662102& 0.658561& 10000& 0.762672& 0.761964\\
300& 0.740635& 0.730182& 15129& 0.762385& 0.763398\\
400& 0.650106& 0.647351& 22500& 0.773483& 0.767096\\

\hline

\end{tabular}
\label{tab:CoulLog}
\end{center}
\end{table}

\

\noindent\textbf{Maximal Determinant Nodes (Fekete Nodes)}

\

Other node sets used in polynomial interpolation and numerical integration on the sphere are the maximal determinant nodes. Let $\phi_1,...,\phi_{(n+1)^2}$ be a basis for the space  $\mathbb{P}_n(\mathbb{S}^2)$ of spherical polynomials of degree $\leq n$. The maximal determinant node set is the configuration $\omega_N := \omega_{(n+1)^2}\subset \mathbb{S}^2$ which maximizes

\[\textup{det}(\phi_i(x_j))_{i,j=1}^{(n+1)^2}\]
These points are independent of the choice of basis. The interpolatory cubature rule associated with the configuration $\omega_N$,

\[Q_n(f):=\sum_{j=1}^{N}w_jf(x_j),\]
is conjectured in \cite{Wom} to have all weights positive which is of interest in numerical integration. For more information about these points and their applications, see \cite{Reim94}, \cite{Reim99}, and \cite{Wom}. A typical node set is shown in Figure \ref{Maxdet}.

Like the minimal energy nodes, computing the maximal determinant nodes is a nonlinear optimization problem. The maximum is approximated by conjugate gradient and Newton-like methods on $\mathbb{S}^2$ \cite{Wom}. Nodes for $1\leq n\leq 165$ are available from \textit{http://web.maths.unsw.edu.au/\textasciitilde rsw/Sphere/Extremal/New/index.html}.

Berman et al \cite{BerBoucks} proved the sequence of maximal determinant configurations is equidistributed, while in \cite{Wom}, Sloan and Womersley proved it is quasi-uniform with 
\[\limsup_{N\to\infty}\gamma(\omega_N) < \frac{4j_0}{\pi}\approx 3.06195,\]
where $j_0$ is the smallest positive zero of the Bessel function of the first kind, $J_0$. As shown in Table \ref{tab:MaxDet}, the mesh ratio bound appears to be much lower though it is unclear whether or not $\lim_{N\to\infty}\gamma(\omega_N)$ exists.

\begin{figure}
\centering
\includegraphics{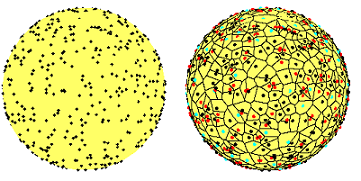}
\caption{Random points for $N=700$ and their Voronoi decomposition.}
\label{Random}
\end{figure}

\begin{table}[h!]
\begin{center}
\caption{Mesh Ratios for Maximal Determinant Nodes}
\begin{tabular}{|c|c|c|c|}
\hline
 $N$ & $\gamma(\omega_N)$&  $N$ & $\gamma(\omega_N)$\\
\hline\hline
9&  0.718884 & 625& 0.805608\\
16& 0.685587 & 1024& 0.840506\\
25& 0.768510 & 2025& 0.858874\\
36& 0.806140& 3025& 0.847347\\
49& 0.777490 & 4096& 0.859887\\
100& 0.708579& 4900 & 0.877990\\
225& 0.860728& 10201 & 0.859625\\
324& 0.799227& 15129 & 0.865695\\
400& 0.809172& 22500 & 0.881492\\
\hline
\end{tabular}
\label{tab:MaxDet}
\end{center}
\end{table}

\

\noindent \textbf{Random Points}

\

The final configurations we consider are random configurations $\omega_N^{rand}$ consisting of $N$ independent samples chosen with respect to surface area measure. Not surprisingly, these configurations do not have optimal order separation or covering and the sequence is not quasi-uniform. As proved in \cite{Randcov} and \cite{RezSaff} respectively,

\[ \lim_{N\rightarrow\infty}\mathbb{E}(\delta(\omega^{\text{rand}}_N))N = \sqrt{2\pi}, \ \ \ \ \ \lim_{N\rightarrow\infty}\mathbb{E}(\eta(\omega^{\text{rand}}_N)) \bigg(\frac{N}{\log N}\bigg)^{1/2}= 2. \]
Note that while the order of the separation of random points is off by factor of $N^{1/2}$, the covering is only off by a factor of (log$N)^{1/2}$. Figure \ref{Random} shows a realization of i.i.d. uniformly chosen random points on $\mathbb{S}^2$. 

\

\noindent \textbf{Summary of Properties}

\

The following tables compare some of the properties of the point sets described above. Table \ref{tab:comp1} compares which sequences are proven to be equidistributed and well-separated, for which values of $N$ the configurations are defined, and whether a subsequence is hierarchical. Table \ref{tab:comp2} compares which sequences are quasi-uniform and the numerically determined bounds for separation and mesh ratio constants.

\
\begin{table}[h!]
\begin{center}
\caption{Summary of Point Set Properties}
\begin{tabular}{|c|c|c|c|c|} 
 \hline
 Name & Defined for & Hier. & Equidist. & Separated \\
 \hline
 Gen Spiral & $N\geq 2$ & No & Yes & Yes \\ 
 Fibonacci & Odd $N$ & No & Yes & Yes  \\
 Hammersley & $N\geq 2$& No& Yes& No \\
 Zonal Eq. Area & $N \geq 2$ & No & Yes & Yes \\
 HEALPix & $12k^2$, & Subseq.& Yes & Yes \\
 Octahedral & $4k^2 + 2$& Subseq.& Yes& Yes  \\
 Radial Icos. & $10k^2+2$, & Subseq. & No & Yes \\
 Cubed Sphere & $6k^2-12k+8$& Subseq.& No& Yes \\
 Equal Area Icos. & $10(m^2+mn+n^2)+2$ & Subseq. & Yes & Yes \\
 Coulomb & $N\geq 2$ & No & Yes & Yes \\
 Log Energy & $N\geq 2$& No & Yes& Yes \\
 Max Det. & $(1+k)^2$ & No & Yes & Yes \\
 Random & $N\geq 2$& No & Yes & No\\
 \hline

\end{tabular}
\label{tab:comp1}
\end{center}

\end{table}

\begin{table}[h!]
\begin{center}
\caption{Comparison of Separation and Mesh Ratio Constants}
\begin{tabular}{|c|c|c|c|} 
 \hline
 Name & Quasi- & Numeric lower bound  & Numeric upper bound \\
  & Uniform& on $\liminf\delta(\omega_N)\sqrt{N}$  & on $\limsup\gamma(\omega_N)\sqrt{N}$ \\
  \hline
 Gen Spiral & Yes & 3.1319 & 0.8099  \\ 
 Fibonacci & Yes & 3.0921& 0.8823  \\
 Hammersley & No & N/A & N/A\\
 Zonal Eq. Area & Yes & 3.3222  & 0.7568  \\
 HEALPix & Yes & 2.8345 & 1.0000 \\
 Radial Icos. & Yes & 2.8363 & 0.8517\\
 Cubed Sphere & Yes & 2.7027  & 1.0000\\
 Octahedral & Yes & 2.8284 & 0.9235\\
 Equal Area Icos. & Yes & 3.1604 & 0.7360 \\
 Coulomb & Yes & 3.3794 & 0.7671\\
 Log Energy & Conj. & 3.3733 & 0.7735\\
 Max Det. & Yes & 3.1957 & 0.8900\\
 Random & No & N/A & N/A\\
 \hline

\end{tabular}

\label{tab:comp2}
\end{center}
\end{table}

\section{Asymptotic Energy Considerations}

We now turn our attention to the potential energy of the above configurations, in particular, $E_{\log}(\omega_N)$ and $E_s(\omega_N)$ for $s$ = -1,1,2, and 3. We are interested in the asymptotic behavior of the energies for above configurations as $N \rightarrow \infty$ and how well the leading terms in the expansion of their energies match the known or conjectured leading terms and coefficients in the minimal energy expansion. We begin with the following well known necessary condition for asymptotically optimal point sets \cite{MEBook}.

\begin{theorem}
Let  $\left\{\omega_N\right\}_{N=2}^\infty$ be a sequence of configurations that is asymptotically optimal with respect to logarithmic energy or Riesz $s$-energy for some $s>-2$, $s\neq 0$, i.e.,
 \[\lim_{N\rightarrow\infty}\frac{E_{s}(\omega_N)}{\mathcal{E}_{s}(N)} = 1 \ \ \ \ \ \ \ \  \textup{or} \ \ \ \ \ \ \ \ \lim_{N\rightarrow\infty}\frac{E_{\log}(\omega_N)}{\mathcal{E}_{\log}(N)} = 1.\]
 Then  $\left\{\omega_N\right\}_{N=2}^\infty$ is equidistributed.
\label{optimpliesequi}
\end{theorem}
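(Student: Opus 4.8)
The plan is to derive equidistribution from the asymptotic optimality by exploiting the second-order structure of the minimal energy expansion together with a "continuity" (or lower-semicontinuity) argument for the energy functional restricted to probability measures on $\mathbb{S}^2$. The key fact I would invoke is the known form of the minimal energy asymptotics: for $-2 < s < 2$, $s \neq 0$, one has
\[
\mathcal{E}_s(N) = W_s(\mathbb{S}^2) N^2 + o(N^2), \qquad N \to \infty,
\]
where $W_s(\mathbb{S}^2) = I_s(\sigma) = \iint |x-y|^{-s}\,d\sigma(x)\,d\sigma(y)$ is the continuous $s$-energy of the uniform measure, which (for $s$ in this range) is the unique minimizer of the continuous energy functional $I_s(\mu)$ over Borel probability measures on $\mathbb{S}^2$. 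The analogous statement holds in the logarithmic case with $N^2$ replaced by the appropriate $N^2$-term and $W_{\log}(\mathbb{S}^2) = I_{\log}(\sigma)$. These are exactly the facts recorded in \cite{MEBook}, so I will cite them rather than reprove them.

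The argument then proceeds as follows. First I would pass to a subsequence: since the space of probability measures on the compact set $\mathbb{S}^2$ is weak-star compact, any subsequence of the normalized counting measures $\nu_N$ has a further weak-star convergent subsequence, say $\nu_{N_k} \to \mu$. It suffices to show $\mu = \sigma$ for every such limit, since that forces the whole sequence $\nu_N \to \sigma$. Second, I would relate the discrete energy to the continuous energy of $\mu$: because the Riesz kernel $K_s$ (resp. $K_{\log}$) is lower semicontinuous and bounded below on $\mathbb{S}^2 \times \mathbb{S}^2$ for $s < 2$, the standard principle of descent gives
\[
I_s(\mu) \leq \liminf_{k \to \infty} \frac{1}{N_k^2}\, E_s(\omega_{N_k}),
\]
where on the right one may include or discard the diagonal terms since their contribution is $o(N^2)$ in this subrange (the number of diagonal pairs is $N$, each contributing a bounded amount when $s < 0$, and for $0 < s < 2$ one uses that the self-energy is negligible after normalizing by $N^2$ — this needs the mild separation-free estimate that $\sum_{x\neq y}|x-y|^{-s}$ over a near-minimizer divided by $N^2$ still converges to $I_s(\mu)$, which follows from truncating the kernel). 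Third, combine the hypothesis $E_s(\omega_{N_k}) \sim \mathcal{E}_s(N_k) \sim I_s(\sigma) N_k^2$ with the inequality above to get $I_s(\mu) \leq I_s(\sigma)$. Since $\sigma$ is the unique minimizer of $I_s$ on probability measures, $\mu = \sigma$.

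I expect the main obstacle to be the careful handling of the diagonal/self-energy terms and the interchange of limits in the principle of descent, particularly for $0 < s < 2$ where the kernel is unbounded: one must verify that normalizing by $N^2$ genuinely kills the singular contributions of a near-optimal configuration, which is where one uses a truncation $K_s^{(M)} = \min(K_s, M)$, applies weak-star convergence to the bounded continuous (after regularization) kernel $K_s^{(M)}$, and then lets $M \to \infty$ using monotone convergence on the $\mu$-side while noting the discrete sums only decrease under truncation. The logarithmic case is handled identically, noting $K_{\log}$ is bounded below on $\mathbb{S}^2 \times \mathbb{S}^2$ and lower semicontinuous, and that $\sigma$ uniquely minimizes $I_{\log}$. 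For $-2 < s < 0$ the kernel is continuous and bounded, so the descent step is immediate and the only subtlety is confirming uniqueness of $\sigma$ as the minimizer of $I_s$ in that range, which again is classical \cite{MEBook}.
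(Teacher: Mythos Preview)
The paper does not actually give its own proof of this theorem; it is stated as a ``well known necessary condition'' and attributed to \cite{MEBook}. So there is no proof in the paper to compare against, and your outline is essentially the standard potential-theoretic argument one finds in that reference for the range $-2<s<2$ (and for the logarithmic kernel): pass to a weak-$*$ convergent subsequence of normalized counting measures, use a truncation of the lower-semicontinuous kernel together with the principle of descent to get $I_s(\mu)\le I_s(\sigma)$, and invoke the uniqueness of $\sigma$ as the minimizer of the continuous energy. That part of your proposal is correct and well organized.

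There is, however, a genuine gap. The statement is for all $s>-2$, $s\neq 0$, and your argument only treats $-2<s<2$. For $s\ge 2$ the continuous energy $I_s(\mu)$ is infinite for every probability measure on $\mathbb{S}^2$, so the inequality $I_s(\mu)\le I_s(\sigma)$ carries no information and the uniqueness-of-minimizer step collapses. In the hypersingular regime $s>2$ the leading term of $\mathcal{E}_s(N)$ is of order $N^{1+s/2}$ rather than $N^2$ (Theorem~\ref{hypersingular}), and equidistribution of asymptotically optimal configurations is the content of the Hardin--Saff ``poppy-seed bagel'' theorem, whose proof proceeds via localization and comparison with scaled planar energies rather than via a continuous energy functional. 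The boundary case $s=2$ (leading order $N^2\log N$) also requires a separate treatment. You should either restrict your claim to $-2<s<2$ and the logarithmic case, or add a second argument for $s\ge 2$ citing the hypersingular theory in \cite{HS2005} or \cite{MEBook}.
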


We define the energy of a probability measure $\mu$ on $\mathbb{S}^2$ with respect to the logarithmic or Riesz $s$-potential as

\[\ \ \ \ \ \ \ \ \ \ \ \mathcal{I}_{s} [\mu]:=\ \int\int \frac{1}{|\textbf{x} - \textbf{y}|^s} \text{d}\mu(\textbf{x})\text{d}\mu(\textbf{y}),\ \ \ \ \ \ s\neq 0\]
\[\mathcal{I}_{\log} [\mu]:=\ \int\int \log\frac{1}{|\textbf{x} - \textbf{y}|} \text{d}\mu(\textbf{x})\text{d}\mu(\textbf{y}).\]
The normalized surface area measure $\sigma$ is the unique minimizer of $\mathcal{I}_{\log} [\mu]$ and $\mathcal{I}_{s} [\mu]$ for $-2<s<2$, $s\neq 0$, and 

\[\mathcal{I}_{\log} [\sigma] = \frac{1}{2}-\log2,\]
\[\mathcal{I}_s[\sigma] = \frac{2^{1-s}}{2-s}, \ \ \ \  -2<s<2.\]
However, for $s\geq 2$, $\mathcal{I}_{s} [\mu] = \infty$ for all $\mu$ supported on $\mathbb{S}^2$ (see for example \cite{MEBook}).

For random points, the expected value of the $s$-energy is easily computed:
\begin{equation}
\mathbb{E}[E_{s}(\omega_N^{\text{rand}})] = \mathcal{I}_{s}[\sigma](N(N-1)),\ \ \ \ \ -2<s<2.
\label{RandomEnergy}
\end{equation}
For $s\geq 2$, $\mathbb{E}[E_{s}(\omega_N^{\text{rand}})] = \infty$.

The Epstein-Zeta function for a lattice $\Lambda$ in $\mathbb{R}^2$ is given by
  
  \[\zeta_{\Lambda}(s):= \sum\limits_{0\neq\textbf{x}\in\Lambda} |\textbf{x}|^{-s},\ \ \ \ \ \ \textup{Re}\ s>2.\]
 Let $\Lambda_2$ be the regular triangular lattice in $\mathbb{R}^2$ generated by basis vectors $(1,0)$ and $(1/2,\sqrt{3}/2)$. It is known from number theory that $\zeta_{\Lambda_2}(s)$ admits the factorization

\begin{equation}
\zeta_{\Lambda_2}(s) = 6\zeta(s/2)\text{L}_{-3}(s/2),\ \ \ \ \ \ \text{Re}\ s>2,
\label{EpsZeta}
\end{equation}
where
 \[\text{L}_{-3}(s) := 1-\frac{1}{2^s}+\frac{1}{4^s}-\frac{1}{5^s}+\frac{1}{7^s}-\cdots,\ \ \ \ \ \ \text{Re}\ s>1,\]
 is a Dirichlet L-series. The right-hand side of (\ref{EpsZeta}) can be used to extend $\zeta_{\Lambda_2}(s)$ to $s$ with Re $s<2$. 
 
 \
 
\subsection{Logarithmic Potential}

The following asymptotic expansion is proven by Betermin and Sandier in \cite{Betermin}:
\begin{theorem}
\label{logexpansion}
There exists a constant $C\neq 0$, independent of $N$, such that 
  \[\mathcal{E}_{\log}(N) = \mathcal{I}_{\log} [\sigma] N^2-\frac{N\log N}{2} + CN + o(N), \ \ \ \ \ \ N\rightarrow \infty,\]
  \[ -0.22553754 \leq C\leq \widehat{C}:=2\log2 +\frac{1}{2}\log\frac{2}{3} +3\log\frac{\sqrt{\pi}}{\Gamma(1/3)} = -0.05560530... \ \ \ \ \ \ \]
\end{theorem}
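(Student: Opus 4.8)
The plan is to prove the expansion by establishing separately an upper bound via an explicit near-optimal configuration and a lower bound via a renormalized-energy argument, the two agreeing in the first two terms and differing only in the linear coefficient. The coefficients $\mathcal{I}_{\log}[\sigma]$ and $-\tfrac12$ of the $N^2$ and $N\log N$ terms are, in effect, already classical: the quadratic coefficient is forced because $\sigma$ is the unique minimizer of $\mathcal{I}_{\log}$ (and any asymptotically optimal sequence is equidistributed by Theorem \ref{optimpliesequi}, which licenses localizing the analysis near $\sigma$), while the $-\tfrac12 N\log N$ term reflects the two-dimensional local geometry of $\mathbb{S}^2$ --- each point carries a logarithmically divergent self-interaction cut off at the nearest-neighbour scale $N^{-1/2}$. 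The real content is the linear term, and the natural tool is the renormalized (jellium) energy $\mathbb{W}$ of Sandier and Serfaty for the two-dimensional one-component plasma. Applying stereographic projection $\mathbb{S}^2\to\mathbb{R}^2\cup\{\infty\}$ and using $\log\frac{1}{|x-y|}=\log\frac{1}{|u-v|}+\tfrac12\log(1+|u|^2)+\tfrac12\log(1+|v|^2)-\log 2$, the sphere energy becomes, up to an explicit $O(N^2)+O(N)$ rearrangement, a planar logarithmic energy in a smooth confining field whose equilibrium density is the pushforward $m(u)\,du=\tfrac{du}{\pi(1+|u|^2)^2}$; its $O(N)$ correction splits as $N$ times a weighted integral of $\min\mathbb{W}$ against $m$, plus an explicit entropy-type integral $\int m\log m$.

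For the upper bound I would transplant a triangular lattice. Over a partition of $\mathbb{S}^2$ into $\approx N$ curvilinear cells of $\sigma$-area $1/N$, place in each cell (pulled to the microscopic scale $N^{-1/2}$) points lying on a rotated, scaled copy of the lattice $\Lambda_2$ from (\ref{EpsZeta}), patched across cell boundaries with $o(N)$ defect points. Expanding $E_{\log}$ of this configuration and subtracting the continuum energy, the $O(N)$ remainder equals $N$ times the per-point renormalized energy of $\Lambda_2$, which is evaluated in closed form from the analytic continuation of $\zeta_{\Lambda_2}$: using the factorization $\zeta_{\Lambda_2}(s)=6\zeta(s/2)\text{L}_{-3}(s/2)$ together with the functional equations of $\zeta$ and $\text{L}_{-3}$ --- equivalently the Kronecker limit formula / Chowla--Selberg formula at the hexagonal point $\tau=e^{i\pi/3}$ --- one obtains precisely the special value (in terms of the Dedekind $\eta$-function) that produces the $\Gamma(1/3)$ in $\widehat{C}$. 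Assembling this with the entropy integral $\int m\log m$ and the $-N(N-1)\log 2$ and $\sum_i\log(1+|u_i|^2)$ bookkeeping yields $E_{\log}(\omega_N)\le \mathcal{I}_{\log}[\sigma]N^2-\tfrac12 N\log N+\widehat{C}N+o(N)$. No optimality of $\Lambda_2$ among microscopic patterns is asserted, which is why this direction gives only an upper bound on $C$.

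For the lower bound I would run the Sandier--Serfaty-type machinery on the projected planar gas: the ``ball-construction'' / energy-splitting lower bounds for two-dimensional Coulomb energies, combined with the $\Gamma$-convergence and lower semicontinuity of $\mathbb{W}$ under blow-up at scale $N^{-1/2}$ around a generic point, give
\[
E_{\log}(\omega_N)\ \ge\ \mathcal{I}_{\log}[\sigma]N^2-\tfrac12 N\log N+\big(c_{\mathbb{W}}+{\textstyle\int m\log m}\big)N+o(N),
\]
where $c_{\mathbb{W}}$ is built from $\inf\mathbb{W}$ weighted by the density $m$, after reconciling normalizations and the change of variables. Since the exact minimum of $\mathbb{W}$ over admissible configurations is unknown (it is conjectured, but not proved, to be attained by $\Lambda_2$), one substitutes the best available explicit lower bound for $\mathbb{W}$ obtained from the ball-construction estimates; propagating constants through the stereographic transfer yields $C\ge -0.22553754$ (this whole direction being the content of Bétermin--Sandier \cite{Betermin}).

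The hard part is the lower bound. The delicate points are: (i) the explicit lower bound for $\mathbb{W}$ itself --- the ball-construction estimates are technical, and it is exactly the slack in them (the gap to the conjectured minimizer $\Lambda_2$) that produces the gap between the two bounds on $C$; and (ii) making the macroscopic-to-microscopic passage rigorous on a compact manifold --- controlling the contribution near the point at infinity of the stereographic chart, handling the spatial non-uniformity of the blow-up against the density $m$, and upgrading an ``almost every blow-up centre'' statement into a genuine $o(N)$ error in the global energy. By contrast the upper bound is comparatively routine once $\widehat{C}$ has been identified with the hexagonal-lattice renormalized energy, the only real care being the bookkeeping of the $o(N)$ defect points along cell boundaries.
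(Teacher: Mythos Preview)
The paper does not give its own proof of this theorem: it is quoted from the literature, with the full attribution ``The following asymptotic expansion is proven by Betermin and Sandier in \cite{Betermin}.'' There is therefore nothing in Section~4 to compare against. Your proposal is, in outline, an accurate sketch of the actual Betermin--Sandier argument (stereographic transfer to a planar Coulomb gas, the Sandier--Serfaty renormalized energy $\mathbb{W}$ for the lower bound, and the hexagonal-lattice construction and Chowla--Selberg/Kronecker limit evaluation producing $\widehat{C}$ for the upper bound), and you correctly identify the sources of the gap between the two numerical bounds and the technical difficulties in the lower bound. One historical refinement: the upper bound $C\le\widehat{C}$ predates \cite{Betermin} and is due to Brauchart--Hardin--Saff (see \cite{BHS12}); the genuinely new content of \cite{Betermin} is the \emph{existence} of the linear-order limit $C$ together with the explicit lower bound, so your emphasis on the lower bound as ``the hard part'' is well placed.
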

\noindent The following extension of Theorem \ref{logexpansion} is conjectured by Brauchart et al in \cite{BHS12}:

\begin{conjecture} 
\label{logconjecture}
 \[\mathcal{E}_{\log}(N) = \mathcal{I}_{\log} [\sigma] N^2-\frac{N\log N}{2} + \widehat{C}N +D\log N + O(1), \ \ \ \ \ \ \ N\rightarrow \infty.\]

\end{conjecture}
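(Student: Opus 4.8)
The final statement is a conjecture, and a full proof lies beyond present techniques; what follows is the line of attack it invites, together with the obstruction that blocks it.

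The plan is to work inside the renormalized-energy (``jellium'') framework of Sandier--Serfaty, in the form adapted to $\mathbb{S}^2$ by B\'etermin--Sandier that already yields Theorem \ref{logexpansion}. There the linear coefficient $C$ in $\mathcal{E}_{\log}(N) = \mathcal{I}_{\log}[\sigma]N^2 - \tfrac12 N\log N + CN + o(N)$ is characterized, up to an explicit additive constant coming from the self-energy regularization, as the minimum of a renormalized Coulomb energy $W$ over all locally finite unit-density point configurations in $\mathbb{R}^2$. The upper bound $C \le \widehat{C}$ is obtained by transporting a large density-normalized patch of the triangular lattice $\Lambda_2$ to the sphere through an area-preserving map and absorbing the twelve topologically forced disclinations (Euler's formula) into the $o(N)$ error; the closed form of $\widehat{C}$ then results from evaluating $W$ on $\Lambda_2$, which via the factorization (\ref{EpsZeta}) and the analytic continuation of $\zeta_{\Lambda_2}$ reduces to a special value of the derivative of an Epstein zeta function — hence the $\Gamma(1/3)$, through a Kronecker/Chowla--Selberg-type evaluation.

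Granting all this, I would proceed in two stages. First, I would try to raise the lower bound on $C$ to $\widehat{C}$, i.e.\ to prove that $\Lambda_2$ minimizes $W$ over \emph{all} unit-density configurations and not merely over lattices (the lattice case being known). Second, with $C = \widehat{C}$ in hand, I would attempt to extract the next term: localize $\mathcal{E}_{\log}(N)$ over geodesic caps of radius comparable to $N^{-1/2}\log N$, expand each cap's contribution with a curvature-corrected second-order version of the renormalized-energy asymptotics, and show that the accumulated curvature corrections together with the excess energies of the twelve pentagonal Voronoi cells — each of size order $\log N$ above a hexagonal background — combine into $D\log N + O(1)$, in analogy with the boundary term in planar Coulomb-gas expansions. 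I would also cross-check the outcome against the companion expansion conjectured for the Coulomb ($s=1$) energy by Brauchart--Hardin--Saff.

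The hard part is the first stage: it is a crystallization problem, morally equivalent to the Abrikosov-lattice question in Ginzburg--Landau theory, and it is open. The unconditional lower bounds available today — such as the $-0.22553754$ in Theorem \ref{logexpansion}, arising from linear-programming (Delsarte--Yudin) bounds or from coercivity of the renormalized functional — do not reach $\widehat{C}$, so even $C$ is not yet rigorously determined, let alone $D$. A realistic intermediate target is the conditional statement: if $\Lambda_2$ minimizes $W$, then $\mathcal{E}_{\log}(N) = \mathcal{I}_{\log}[\sigma]N^2 - \tfrac12 N\log N + \widehat{C}N + O(\log N)$; pinning down the sharp coefficient $D$ and the $O(1)$ remainder would then require a further, delicate second-order analysis, and in the meantime the predicted value of $D$ can be tested against high-precision numerical minimizers, consistent with the energy tables in Section 3.
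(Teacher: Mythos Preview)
The statement is a \emph{conjecture}, not a theorem, and the paper does not attempt to prove it: it is simply quoted from Brauchart--Hardin--Saff \cite{BHS12} as the conjectured refinement of Theorem~\ref{logexpansion}, and is then used only as a benchmark in the numerical comparisons of Figure~\ref{logasymptotics3}. There is therefore no ``paper's own proof'' to compare your proposal against.

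That said, your write-up is appropriate in spirit: you correctly flag that the statement is open, you correctly identify the B\'etermin--Sandier renormalized-energy framework as the source of Theorem~\ref{logexpansion} and of the bounds on $C$, and you correctly isolate the crystallization problem (minimality of $\Lambda_2$ for $W$ among all unit-density configurations, not just lattices) as the principal obstruction even to the third-order coefficient, let alone the fourth. Your heuristic for the $D\log N$ term---twelve pentagonal defects each contributing order $\log N$ above a hexagonal background---is the standard picture and matches what the paper's numerics in Section~3 are probing. None of this is a proof, and you say so; as a commentary on why the conjecture remains open, it is accurate.
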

\noindent Beltran \cite{Bel15} provides a partial converse to Theorem \ref{optimpliesequi}:
 \begin{theorem}
 \label{Beltranthm}
 If $\left\{\omega_N\right\}_{N=2}^\infty$ is a sequence of well-separated configurations whose spherical cap discrepancy satisfies 
 \[\lim_{N\rightarrow\infty}D_C(\omega_N)\log N = 0,\]
 then  $\left\{\omega_N\right\}_{N=2}^\infty$ are asymptotically optimal with respect to logarithmic energy.

 \end{theorem}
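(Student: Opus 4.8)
To prove Theorem~\ref{Beltranthm}, the plan is to show that $E_{\log}(\omega_N)=\mathcal{I}_{\log}[\sigma]\,N^2+o(N^2)$; since $E_{\log}(\omega_N)\ge\mathcal{E}_{\log}(N)$ trivially and Theorem~\ref{logexpansion} gives $\mathcal{E}_{\log}(N)=\mathcal{I}_{\log}[\sigma]\,N^2+O(N\log N)$ with $\mathcal{I}_{\log}[\sigma]=\tfrac12-\log 2\neq 0$, this at once forces $E_{\log}(\omega_N)/\mathcal{E}_{\log}(N)\to1$. The whole content is therefore the upper bound on $E_{\log}(\omega_N)$, and the device I would use is to truncate the logarithmic kernel at the natural length scale $t:=N^{-1/2}$. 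Set $\ell_t(x,y):=\min\{\log\tfrac1{|x-y|},\,\log\tfrac1t\}$, a bounded, continuous, \emph{zonal} kernel (a monotone function of the geodesic distance), and split, exactly,
\[
E_{\log}(\omega_N)=\sum_{i\neq j}\ell_t(x_i,x_j)+\sum_{i\neq j}\Bigl(\log\tfrac1{|x_i-x_j|}-\log\tfrac1t\Bigr)^{\!+},
\]
where the second sum ranges only over pairs with $|x_i-x_j|<t$.

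First I would dispose of the singular sum using only well-separation. With $t=N^{-1/2}$ and $\delta(\omega_N)\ge cN^{-1/2}$, a packing argument (disjoint caps of radius $\tfrac{c}{2}N^{-1/2}$ about the points, all contained in a cap of radius $O(N^{-1/2})$ about $x_i$) shows that each $x_i$ has at most a bounded number of other points within chordal distance $t$, the bound depending only on $c$; and for each such pair $\log\tfrac{t}{|x_i-x_j|}\le\log\tfrac{t}{\delta(\omega_N)}\le\log\tfrac1c$. Hence the singular sum is $O(N)=o(N^2)$. (Ordering the neighbours of $x_i$ by distance and using $|x_i-x_{j_k}|\gtrsim\sqrt{k}\,N^{-1/2}$, one sees that the $\log N$ contributions in fact cancel to leading order, so any scale $t=N^{-\alpha}$ with $0<\alpha\le\tfrac12$ would serve here.)

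Next I would estimate the smooth sum by comparing the empirical measure $\nu_N$ with $\sigma$. Writing $\sum_{i\neq j}\ell_t(x_i,x_j)=N^2\!\int\!\!\int\ell_t\,d\nu_N d\nu_N-\tfrac12 N\log N$, I would control $\int\!\!\int\ell_t\,d\nu_N d\nu_N$ via $\nu_N\times\nu_N-\sigma\times\sigma=(\nu_N-\sigma)\times\nu_N+\sigma\times(\nu_N-\sigma)$ together with the fact that, for fixed $x$, $\ell_t(x,\cdot)$ is a monotone function of the geodesic distance to $x$; an integration by parts against the spherical cap discrepancy then gives
\[
\Bigl|\int\ell_t(x,\cdot)\,d(\nu_N-\sigma)\Bigr|\le D_C(\omega_N)\cdot\mathrm{Var}\bigl(\ell_t(x,\cdot)\bigr)=D_C(\omega_N)\,\log\tfrac2t=O\!\bigl(D_C(\omega_N)\log N\bigr)
\]
uniformly in $x$, whence $\bigl|\int\!\!\int\ell_t\,d\nu_N d\nu_N-\int\!\!\int\ell_t\,d\sigma d\sigma\bigr|=O(D_C(\omega_N)\log N)=o(1)$ by hypothesis. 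A direct computation gives $\int\!\!\int\ell_t\,d\sigma\,d\sigma=\mathcal{I}_{\log}[\sigma]-t^2/8=\mathcal{I}_{\log}[\sigma]+o(1)$. Assembling the pieces, $\sum_{i\neq j}\ell_t(x_i,x_j)=\mathcal{I}_{\log}[\sigma]N^2-\tfrac12 N\log N+o(N^2)$, so $E_{\log}(\omega_N)\le\mathcal{I}_{\log}[\sigma]N^2+o(N^2)$, and with the lower bound noted above the theorem follows. (In fact this argument yields the sharper statement $E_{\log}(\omega_N)=\mathcal{I}_{\log}[\sigma]N^2-\tfrac12 N\log N+o(N^2)$, capturing the second-order term.)

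The main obstacle is the tension governing the choice of the truncation scale $t$: it must be small enough that neither $\int\!\!\int\ell_t\,d\sigma\,d\sigma$ nor $N\log\tfrac1t$ perturbs the $N^2$-order term, yet the total variation $\log\tfrac2t$ of the zonal profile must remain $O(\log N)$ so that the hypothesis $D_C(\omega_N)\log N\to0$ can absorb the bulk discrepancy error $N^2D_C(\omega_N)\log N$, and simultaneously $t$ must not drop below the separation scale, lest the singular sum escape the control of well-separation; the value $t=N^{-1/2}$ sits precisely in this window. A secondary technical point is making the discrepancy-against-a-monotone-zonal-profile estimate fully rigorous — handling the profile's blow-up near distance $0$ after truncation and the degenerate caps centred at the points themselves — but this is routine once the integration by parts is arranged.
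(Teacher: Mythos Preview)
The paper does not contain a proof of Theorem~\ref{Beltranthm}; the result is attributed to Beltr\'an and cited from~\cite{Bel15} without argument, so there is nothing in the paper itself to compare your proposal against.

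That said, your sketch is the standard route and is correct: truncate the logarithmic kernel at the separation scale $t=N^{-1/2}$, control the near-diagonal remainder via well-separation and a packing count, and bound the smooth zonal part by a Koksma--Hlawka-type inequality for monotone radial profiles against the spherical cap discrepancy. This is, up to bookkeeping, essentially Beltr\'an's own argument in~\cite{Bel15}. One minor remark: you do not need the full strength of Theorem~\ref{logexpansion} to close the loop; the classical first-order asymptotic $\mathcal{E}_{\log}(N)=\mathcal{I}_{\log}[\sigma]N^2+o(N^2)$ already suffices once you have established $E_{\log}(\omega_N)=\mathcal{I}_{\log}[\sigma]N^2+o(N^2)$.
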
 
 \noindent As a corollary, Coulomb points and Fibonacci spiral points are asymptotically log optimal. We make the following natural conjecture:
  
\begin{conjecture}
\label{logequiconj}
  The condition on the spherical cap discrepancy in Theorem \ref{Beltranthm} can be relaxed to
   \[\lim_{N\rightarrow\infty}D(\omega_N) = 0.\]
   Thus a sequence of configurations $\left\{\omega_N\right\}_{N=2}^{\infty}$ is equidistributed and well-separated iff it is asymptotically optimal with respect to logarithmic energy.
\end{conjecture}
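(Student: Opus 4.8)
The converse half of the stated equivalence --- that a log-optimal sequence is equidistributed --- is precisely Theorem~\ref{optimpliesequi}, so the substance of Conjecture~\ref{logequiconj} is the forward implication, a weakening of the hypothesis of Beltran's Theorem~\ref{Beltranthm}: \emph{if $\{\omega_N\}$ is well-separated and $D_C(\omega_N)\to0$ (equivalently, $\nu_N$ converges weak-$*$ to $\sigma$), then $E_{\log}(\omega_N)/\mathcal{E}_{\log}(N)\to1$.} My plan is to prove directly that $N^{-2}E_{\log}(\omega_N)\to\mathcal{I}_{\log}[\sigma]$; since Theorem~\ref{logexpansion} gives $\mathcal{E}_{\log}(N)=\mathcal{I}_{\log}[\sigma]\,N^{2}+o(N^{2})$, this delivers the conjectured ratio limit. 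The bound $\liminf_N N^{-2}E_{\log}(\omega_N)\ge\mathcal{I}_{\log}[\sigma]$ is immediate (from $E_{\log}(\omega_N)\ge\mathcal{E}_{\log}(N)$, or from lower semicontinuity of the energy together with $\nu_N\otimes\nu_N\to\sigma\otimes\sigma$), so everything reduces to the matching upper bound.

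Fix a small $r>0$ and split $E_{\log}(\omega_N)=S_{>r}+S_{\le r}$, the sums over ordered pairs $x\ne y$ in $\omega_N$ with $|x-y|>r$, respectively $0<|x-y|\le r$. The function $(x,y)\mapsto\mathbf{1}_{\{|x-y|>r\}}\log|x-y|^{-1}$ is bounded and continuous off the $\sigma\otimes\sigma$-null set $\{(x,y):|x-y|=r\}$, so weak-$*$ convergence of $\nu_N\otimes\nu_N$ yields
\[
N^{-2}S_{>r}\longrightarrow\iint_{|x-y|>r}\log\frac{1}{|x-y|}\,d\sigma(x)\,d\sigma(y),\qquad N\to\infty,
\]
and this integral increases to $\mathcal{I}_{\log}[\sigma]$ as $r\downarrow0$ because $\log|x-y|^{-1}$ is $\sigma\otimes\sigma$-integrable. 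For $S_{\le r}$ I would invoke well-separation, $\delta(\omega_N)\ge cN^{-1/2}$: since a spherical cap of Euclidean radius $\rho$ has $\sigma$-measure $\rho^{2}/4$, a disjoint-cap packing bounds the number of points of $\omega_N$ within distance $\rho$ of any fixed $x$ by $A(\rho^{2}N+1)$ with $A=A(c)$, whence the $j$-th nearest point of $\omega_N$ to $x$ lies at distance at least $c'\sqrt{j/N}$ for all $j\ge1$. Consequently
\[
\sum_{\substack{y\in\omega_N\\ 0<|x-y|\le r}}\log\frac{1}{|x-y|}\;\le\;\sum_{1\le j\le A(r^{2}N+1)}\log\!\Big(\tfrac{1}{c'}\sqrt{N/j}\,\Big),
\]
and Stirling's formula collapses the right side to $O\!\big(r^{2}N\log(1/r)\big)+O(\log N)$, uniformly in $N$; summing over the $N$ choices of $x$ gives $\limsup_N N^{-2}S_{\le r}\le g(r)$ with $g(r)=O\!\big(r^{2}\log(1/r)\big)\to0$ as $r\downarrow0$. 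Combining, $\limsup_N N^{-2}E_{\log}(\omega_N)\le\iint_{|x-y|>r}\log|x-y|^{-1}\,d\sigma\,d\sigma+g(r)$ for every small $r>0$, and letting $r\downarrow0$ gives $\limsup_N N^{-2}E_{\log}(\omega_N)\le\mathcal{I}_{\log}[\sigma]$, which finishes the forward implication.

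I expect the true obstacles to lie beyond this leading-order argument. The first and most serious is the meaning of ``asymptotically optimal'': the computation above only controls the $N^{2}$ term, whereas Beltran's stronger hypothesis $D_C(\omega_N)\log N\to0$ is calibrated to also recover the $O(N)$ term --- the $\widehat C N$ coefficient of Theorem~\ref{logexpansion} and Conjecture~\ref{logconjecture} --- and the $N$-scale energy defect is governed by a \emph{rate} of equidistribution, not by equidistribution alone; obtaining that sharper conclusion from the bare hypothesis $D_C(\omega_N)\to0$ seems to require a genuinely new idea, and this is the real content to be supplied. A second, more technical point is that well-separation is truly indispensable in the near-diagonal estimate (a single collapsing pair already wrecks even the ratio), and the bookkeeping between the packing constant and the logarithmic sum must be arranged so that $g(r)\to0$. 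Finally, regarding the ``iff'': a log-optimal sequence is equidistributed by Theorem~\ref{optimpliesequi}, but it need \emph{not} be well-separated --- inserting a bounded cluster of points at an exponentially small scale perturbs $E_{\log}(\omega_N)$ by only $o(N)$, hence cannot be detected by leading-order (indeed by $o(N)$-order) optimality --- so the equivalence should be understood with a sufficiently strong notion of optimality, or with well-separation retained as a standing hypothesis as in the theorem being relaxed.
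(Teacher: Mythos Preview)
This statement is a \emph{conjecture} in the paper and is not proved there; the paper offers only numerical support (Figure~\ref{logasymptotics1}) and the observation that the analog for $0<s<2$ is Leopardi's Theorem~\ref{s1char}. So there is no paper proof to compare against.

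That said, your argument for the forward implication is correct. Splitting into near and far pairs, handling the far part by weak-$*$ convergence of $\nu_N\otimes\nu_N$, and controlling the near part via the packing bound $|\{y:|x-y|\le\rho\}|\le A(\rho^2N+1)$ that follows from $\delta(\omega_N)\ge cN^{-1/2}$ is precisely the standard mechanism behind Theorem~\ref{s1char}, transported to the logarithmic kernel; the Stirling computation you sketch does give $N^{-2}S_{\le r}=O(r^2\log(1/r))$. Your worry that ``asymptotically optimal'' might demand matching the $O(N)$ term is a red herring in this paper: asymptotic optimality is defined there as $E_{\log}(\omega_N)/\mathcal{E}_{\log}(N)\to1$, a purely leading-order statement, and your argument already yields $N^{-2}E_{\log}(\omega_N)\to\mathcal{I}_{\log}[\sigma]$, which together with Theorem~\ref{logexpansion} is exactly that. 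In other words, you have supplied a proof of the forward direction that the paper does not give; Beltran's extra factor of $\log N$ is simply not needed for this conclusion.

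You are also right that the ``iff'' fails as literally stated. Perturbing a single point of an optimal $\omega_N$ to within distance $e^{-\sqrt N}$ of another point changes $E_{\log}$ by $O(N)=o(N^2)$, so the ratio condition is preserved while well-separation is destroyed. Thus only one direction holds unconditionally; the equivalence is valid only if well-separation is retained as a standing hypothesis, which is presumably what the authors intend.
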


\begin{figure}
\centering
\makebox[\textwidth][c]{\includegraphics[width=1.4\textwidth]{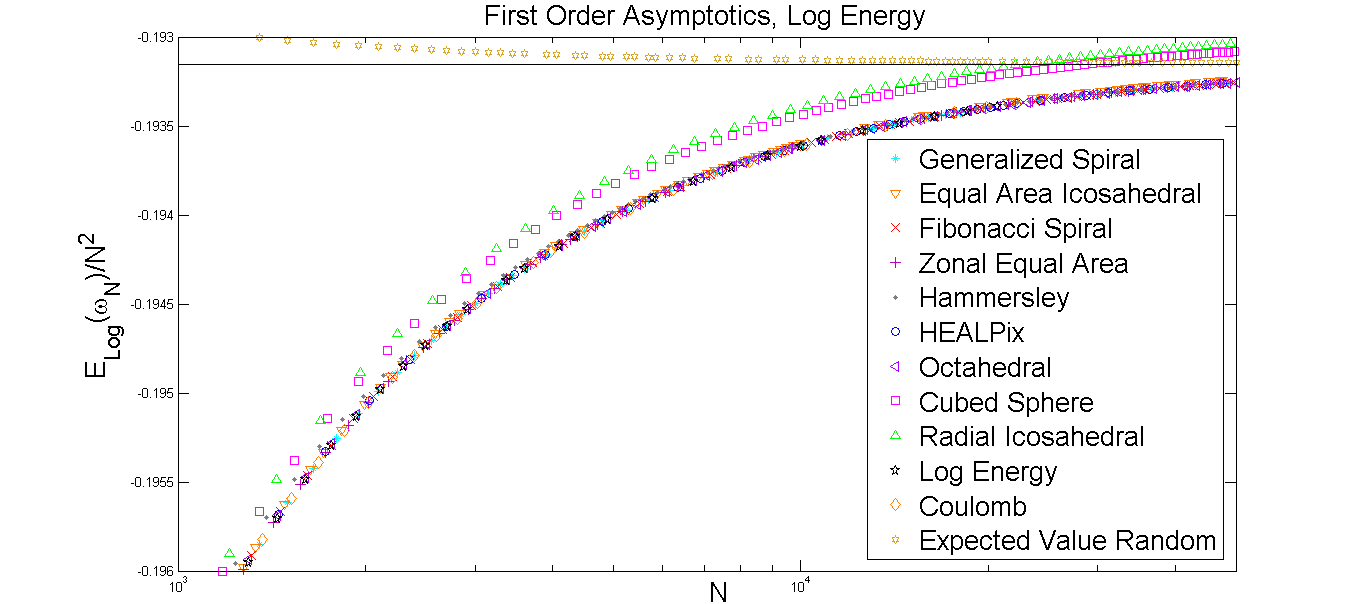}}
\caption{First order asymptotics for log potential. The solid black line is the known first order coefficient in the minimal energy expansion as given in Theorem \ref{logexpansion}.}
\label{logasymptotics1}
\end{figure}

\begin{figure}
\centering
\makebox[\textwidth][c]{\includegraphics[width=1.4\textwidth]{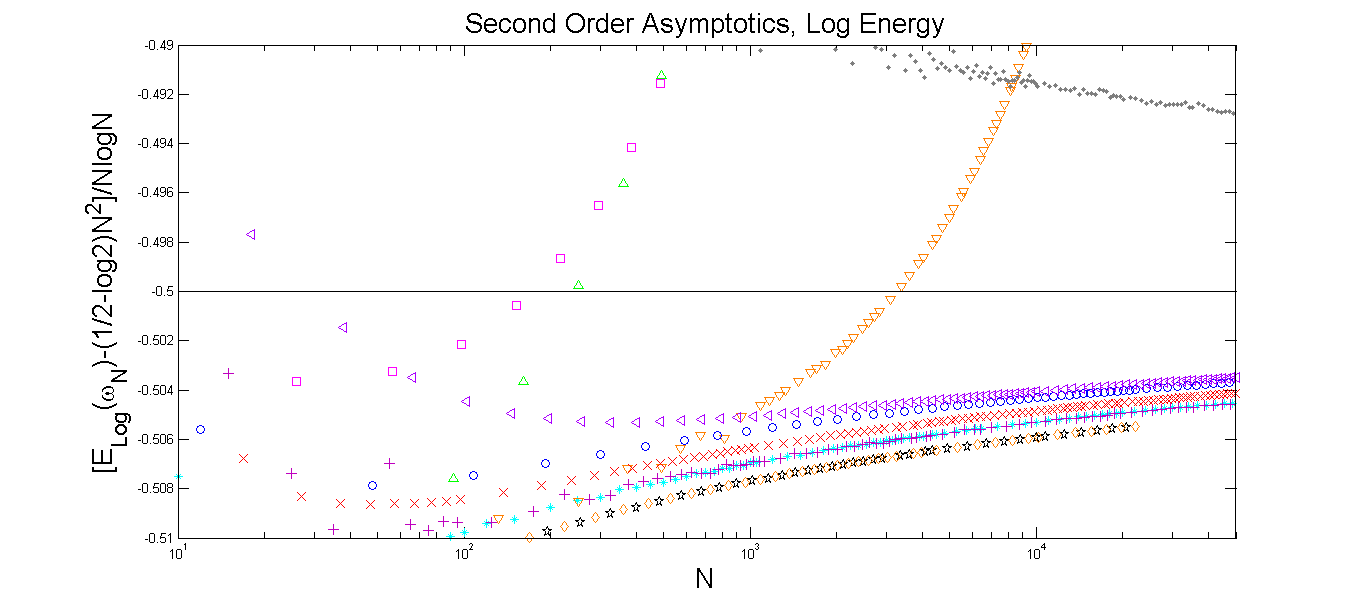}}
\caption{Second order asymptotics for log potential (with the legend of Figure \ref{logasymptotics1}). The solid black line corresponds to the known coefficient from Theorem \ref{logexpansion}.}
\label{logasymptotics2}
\end{figure}

\begin{figure}
\centering
\makebox[\textwidth][c]{\includegraphics[width=1.4\textwidth]{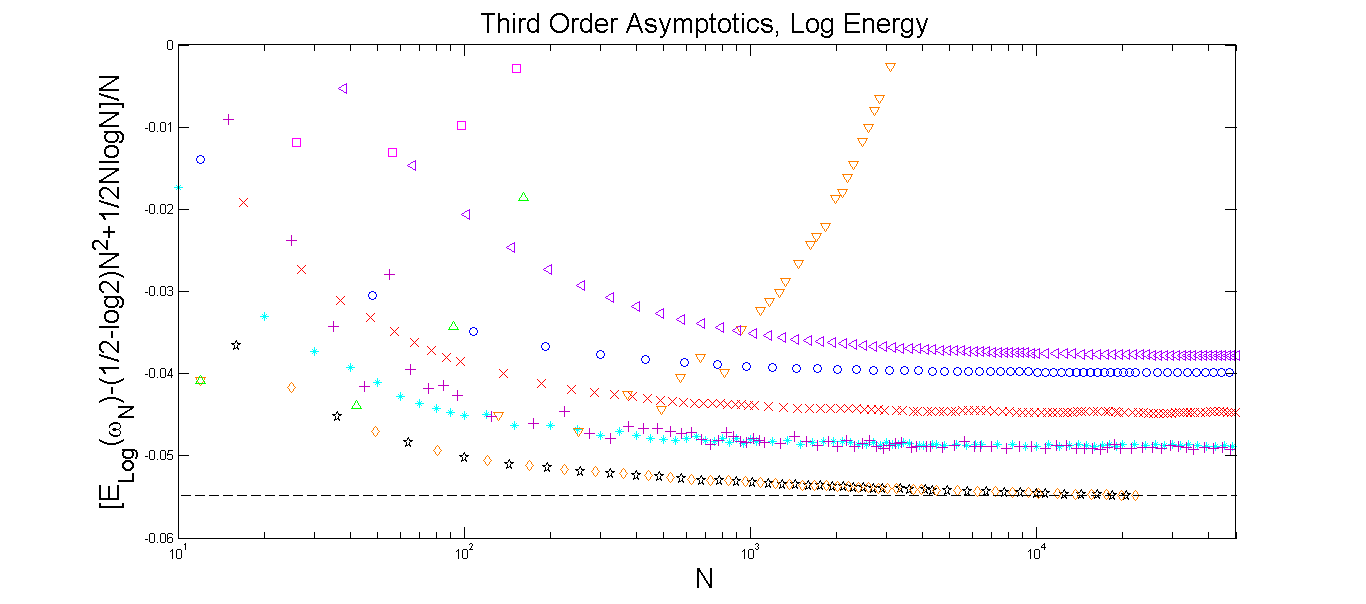}}
\caption{Third order asymptotics for log energy (with the legend of Figure \ref{logasymptotics1}). The dashed line is the conjectured third order constant from \cite{BHS12}.}
\label{logasymptotics3}
\end{figure}

  Figures \ref{logasymptotics1}, \ref{logasymptotics2}, and \ref{logasymptotics3} display three comparisons of the logarithmic energies of the point sets with cardinality up to 50,000.  Some configurations are sampled along subsequences to avoid overcrowding the picture. Due to the computational cost of generating approximate log energy and Coulomb points, these points are only available for $N<22,500$. The energies of random configurations are plotted by expected value. We do not include the maximal determinant nodes because there is no known algorithm to generate them in polynomial time.
  
  Figure \ref{logasymptotics1} shows $E_{\log}(\omega_N)/N^2$ for the point sets. For the equidistributed and well-separated point sets as well as for the Hammersley nodes, this ratio converges to $1/2-\log2$ supporting Conjecture \ref{logequiconj}. Though the radial icosahedral and cubed sphere points are not equidistributed and thus not asymptotically optimal, their log energy appears to be of leading order $N^2$.

 Figure \ref{logasymptotics2} displays
$[E_{\log}(\omega_n) -  \mathcal{I}_{\log} [\sigma] N^2]/N\log N$ for each configuration. The curves for classes of point sets begin separating. The energies of the octahedral, HEALPix, Fibonacci, generalized spiral, zonal equal area, and Coulomb points all appear to converge to the correct second order term. The energy of the Hammersley nodes appears to have $N\log N$ second term, though incorrect coefficient. While the equal area icosahedral nodes perform better than the radial icosahedral nodes, their asymptotic energy appears to have a second term different to $N\log N$. Numerical second order behavior of log energy points is also studied in \cite{NBK}.

Figure \ref{logasymptotics3} compares the energy of the configurations to the conjectured minimal log energy third order term, i.e. $[E_{\log}(\omega_n) - \mathcal{I}_{\log} [\sigma]N^2 + 1/2N\log N]/N$. Conjecture \ref{logconjecture} is supported by the behavior of the Coulomb and log points. The octahedral, HEALPix, Fibonacci, zonal equal area, and generalized spiral configurations appear to have the third term of their energy of order $N$ but the wrong coefficient. Of the algorithmically generated point sets, the generalized spiral and zonal equal area points perform the best with respect to the logarithmic energy.

\
  
\subsection{Riesz Potential, -2 $<\textbf{\textit{s}}<$ 2, $\textbf{\textit{s}}\neq$ 0} 

\

  In this range, the best known bounds on $\mathcal{E}_s(N)$ are due to Wagner (cf. \cite{WagDn} and \cite{WagUp}).
 
 \begin{theorem}
 \label{s1theorem}
 For $-2<s<2$, $s\neq 0$, there exist $c_s,C_s>0$ such that for all $N\in\mathbb{N}$,
 \[c_sN^{1+s/2}\leq \mathcal{E}_s(N) - \mathcal{I}_s[\sigma]N^2\leq C_sN^{1+s/2}.\]
 \end{theorem}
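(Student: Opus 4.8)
\emph{Strategy.} The plan is to prove the estimate as a pair of matching one-sided bounds for $|\mathcal{E}_s(N)-\mathcal{I}_s[\sigma]N^2|$ -- one produced by an explicit near-optimal configuration, one valid for \emph{every} $N$-point configuration -- treating the ranges $0<s<2$ (where $|\textbf{x}-\textbf{y}|^{-s}$ has an integrable, since $s<2$, singularity on the diagonal) and $-2<s<0$ (bounded, continuous kernel) separately. In the singular range both directions turn out to be comparatively routine; the one genuinely delicate point, which I expect to be the main obstacle, is the lower bound in the bounded range, which is an irregularities-of-distribution statement.

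\smallskip
\noindent\emph{The construction.} For either range, take a diameter-bounded equal-area partition $\{W_i\}_{i=1}^{N}$ of $\mathbb{S}^2$ with $\sigma(W_i)=1/N$ and $cN^{-1/2}\le\operatorname{diam}W_i\le CN^{-1/2}$ -- e.g.\ the zonal partition of Rakhmanov, Saff and Zhou \cite{RSZ94} from Section 2 -- and put $x_i$ uniformly at random in $W_i$, independently over $i$. Off the diagonal the joint law is the product of the normalized restrictions of $\sigma$, so
\[\mathbb{E}\bigl[E_s(\omega_N)\bigr]=\mathcal{I}_s[\sigma]N^2-N^2\sum_{i=1}^{N}\iint_{W_i\times W_i}|\textbf{u}-\textbf{v}|^{-s}\,\mathrm{d}\sigma(\textbf{u})\,\mathrm{d}\sigma(\textbf{v}).\]
The two-sided diameter bound makes each cell self-energy comparable to $N^{s/2-2}$, so the correction term is comparable to $N^{1+s/2}$; since $\mathcal{E}_s(N)\le\mathbb{E}[E_s(\omega_N)]$ and ``the minimum is at most the mean,'' this produces a configuration realizing one half of the estimate -- the lower bound on $|\mathcal{E}_s(N)-\mathcal{I}_s[\sigma]N^2|$ when $0<s<2$ (the correction is then negative), and, together with the trivial inequality $\mathcal{E}_s(N)\ge\mathcal{I}_s[\sigma]N^2$ for $-2<s<0$ coming from conditional positive-definiteness of $-|\textbf{x}-\textbf{y}|^{-s}$, the upper bound on $|\mathcal{E}_s(N)-\mathcal{I}_s[\sigma]N^2|$ when $-2<s<0$. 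This is Wagner's ``upper bound'' half \cite{WagUp}.

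\smallskip
\noindent\emph{The remaining bound for $0<s<2$.} Here I would kill the diagonal singularity smoothly so as to keep positive-definiteness: writing $|\textbf{x}-\textbf{y}|^{-s}=c_s\int_0^\infty t^{s/2-1}e^{-t|\textbf{x}-\textbf{y}|^{2}}\,\mathrm{d}t$ exhibits the Riesz kernel as a positive superposition of Gaussian (hence positive-definite) kernels, and cutting the integral at $t=T\asymp N$ yields a bounded positive-definite kernel $g_T$ with $g_T(\textbf{x},\textbf{x})\asymp N^{s/2}$, $g_T\le|\textbf{x}-\textbf{y}|^{-s}$, and $\iint g_T\,\mathrm{d}\sigma\,\mathrm{d}\sigma=\mathcal{I}_s[\sigma]-O(N^{s/2-1})$. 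Then for any configuration,
\[E_s(\omega_N)\ \ge\ \sum_{i\neq j}g_T(x_i,x_j)\ =\ \sum_{i,j}g_T(x_i,x_j)-N\,g_T(\textbf{x},\textbf{x})\ \ge\ N^2\!\!\iint g_T\,\mathrm{d}\sigma\,\mathrm{d}\sigma-N\,g_T(\textbf{x},\textbf{x}),\]
the last step using that the mean-subtracted $g_T$ has nonnegative Funk--Hecke coefficients; the right side is $\mathcal{I}_s[\sigma]N^2-O(N^{1+s/2})$, which is the sought upper bound on $|\mathcal{E}_s(N)-\mathcal{I}_s[\sigma]N^2|$.

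\smallskip
\noindent\emph{The remaining (hard) bound for $-2<s<0$.} Since $-|\textbf{x}-\textbf{y}|^{-s}$ is conditionally positive definite, expanding $\nu_N=\tfrac1N\sum_i\delta_{x_i}-\sigma$ in spherical harmonics gives a Stolarsky-type identity
\[\bigl|\mathcal{E}_s(N)-\mathcal{I}_s[\sigma]N^2\bigr|\ =\ N^2\min_{\omega_N}\ \sum_{k\ge1}\lambda_k(s)\sum_{m}\bigl|\widehat{\nu_N}(k,m)\bigr|^2,\qquad \lambda_k(s)>0,\ \ \lambda_k(s)\asymp_s k^{-2-|s|},\]
which for $s=-1$ is exactly Stolarsky's invariance principle relating the sum of pairwise distances to the squared $L^2$ spherical cap discrepancy. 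The weights decay polynomially, so the right side is controlled by the frequencies $k\asymp\sqrt N$; the partition above suppresses all lower modes and brings it down to $\asymp N^{-1-|s|/2}$, and the content of the lower bound is that \emph{no} configuration can do essentially better, i.e.\ $\sum_{k\ge1}\lambda_k(s)\sum_m|\widehat{\nu_N}(k,m)|^2\gtrsim_s N^{-1-|s|/2}$ uniformly in $\omega_N$. This is a Beck-type estimate: one pairs $\nu_N$ with smooth bumps localized at scale $N^{-1/2}$ (equivalently with a band of frequencies $k\asymp\sqrt N$) and argues the pairing cannot be uniformly small because $\nu_N$ must ``see'' its $N$ atoms. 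Since the relevant weight $k^{-2-|s|}$ coincides with the cap-discrepancy weight $k^{-3}$ only at $s=-1$, pushing this through for the whole range is precisely the substance of Wagner's ``lower bound'' paper \cite{WagDn}, whose harmonic-analytic argument I would follow; adjusting constants disposes of small $N$, and the same scheme on $\mathbb{S}^{d}$ replaces $N^{1+s/2}$ throughout by $N^{1+s/d}$.
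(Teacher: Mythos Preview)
The paper does not give its own proof of this theorem; it simply attributes the result to Wagner \cite{WagDn,WagUp} and states it. Your sketch is a correct outline of precisely that approach---the equal-area partition construction for one direction and the positive-definiteness/truncation and Beck--Wagner irregularities-of-distribution arguments for the other---so there is nothing to compare beyond noting that you have reproduced the substance of the cited references rather than anything appearing in the paper itself.
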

 \noindent Kuiljaars and Saff \cite{KS98} conjecture that $\lim_{N\rightarrow\infty} [\mathcal{E}_s(N) - \mathcal{I}_s[{\sigma}]N^2]/N^{1+s/2}$ exists.
  
  \begin{conjecture}
   For $-2<s<2$, $s\neq 0$,
  \[\mathcal{E}_s(N) = \mathcal{I}_s[{\sigma}]N^2 + \frac{(\sqrt{3}/2)^{s/2}\zeta_{\Lambda_2}(s)}{(4\pi)^{s/2}}N^{1+s/2} + o(N^{1+s/2}),\ \ \ \ \ \ N\rightarrow\infty.\]
  \label{s1conj}
  \end{conjecture}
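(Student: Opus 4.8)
The plan is to prove matching upper and lower asymptotic bounds for $\mathcal{E}_s(N)-\mathcal{I}_s[\sigma]N^2$; since they would pin down the ratio $[\mathcal{E}_s(N)-\mathcal{I}_s[\sigma]N^2]/N^{1+s/2}$, this simultaneously settles the existence of the limit conjectured in \cite{KS98}. The precise value of the coefficient is forced by a density normalization: any competitor whose points lie, at microscopic scale, on a rescaled triangular lattice $\rho_N\Lambda_2$ must satisfy $\frac{2}{\sqrt3}\rho_N^{-2}=\frac{N}{4\pi}$ in order to match the equilibrium density of $\sigma$, whence $\rho_N^{-s}=\frac{(\sqrt3/2)^{s/2}}{(4\pi)^{s/2}}N^{s/2}$, which is exactly the prefactor appearing in the statement.

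For the \emph{upper bound} I would take such a competitor explicitly --- the equal-area icosahedral points $\omega_N=\Phi(\widetilde{\omega_N})$ of Section 2 are a natural candidate, since $\Phi$ is area-preserving and bi-Lipschitz, their Voronoi cells are almost-regular hexagons, and there are only the twelve fixed pentagonal defects; any equal-area map of a hexagonally tiled planar patch would serve as well, provided its discrepancy is $O(N^{-1/2})$. Because for $-2<s<2$ the $N^{1+s/2}$ correction is a cumulative effect spread over many scales rather than a nearest-neighbour one, the appropriate tool is not a naive near/far splitting of the kernel but the jellium (neutralizing-background) renormalization: one writes $E_s(\omega_N)-\mathcal{I}_s[\sigma]N^2$ as $N$ times the background-compensated energy per point of the rescaled lattice, plus corrections arising from the curvature of $\mathbb{S}^2$, from the background being $\sigma$ rather than a flat density, and from the $O(N^{1/2})$ defect and seam points; each correction is of lower order precisely because $\Phi$ is area-preserving and Lipschitz. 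By homogeneity the renormalized energy per point equals $\rho_N^{-s}$ times the renormalized energy of $\Lambda_2$, and the latter is $\zeta_{\Lambda_2}(s)$ as continued by (\ref{EpsZeta}). This is the Riesz counterpart of the computation behind Theorem \ref{logexpansion}.

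The \emph{lower bound}, which must hold for every sequence $\{\omega_N\}$, is the hard part, as it requires controlling the microscopic structure of near-minimizers. Following the program developed for the Coulomb and logarithmic cases by Sandier--Serfaty and subsequent authors, I would: (i) blow up a near-optimal $\omega_N$ around a generic point by the factor $\rho_N^{-1}$ and show, via an energy $\Gamma$-convergence framework, that the limit is an infinite configuration in $\mathbb{R}^2$ of density $2/\sqrt3$ together with a neutralizing uniform background; (ii) bound $\mathcal{E}_s(N)-\mathcal{I}_s[\sigma]N^2$ from below by $N\rho_N^{-s}$ times the infimum of the associated renormalized ("jellium") energy over all such limiting objects; and (iii) prove this infimum is attained by $\Lambda_2$, with value $\zeta_{\Lambda_2}(s)$. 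Step (iii) --- \emph{global} minimality of the triangular lattice for the renormalized planar Riesz energy, as opposed to the classical minimality among lattices of $\zeta_\Lambda(s)$ (Cassels, Rankin, Ennola, Diananda) --- is the genuine obstacle: it is open for general $s\in(-2,2)$ and is the exact analogue of the unresolved "hexagonal lattice is optimal" problem that also blocks identification of the constant in the hypersingular range $s>2$.

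Short of step (iii), two weaker results are within reach. Pairing the explicit construction above with the Delsarte--Yudin linear-programming method (a suitable positive-definite test function) reproves Wagner's two-sided estimate, Theorem \ref{s1theorem}, with explicit though non-sharp constants. And a Hardin--Saff-type patching argument on $\mathbb{S}^2$ --- cover the sphere by equal-area caps, insert near-optimal configurations, and estimate the cross-boundary interactions --- would establish that $\lim_{N\to\infty}[\mathcal{E}_s(N)-\mathcal{I}_s[\sigma]N^2]/N^{1+s/2}$ exists without identifying it. The honest reduction is therefore that Conjecture \ref{s1conj} follows from the global optimality of $\Lambda_2$ for the renormalized planar Riesz energy, and I expect that to be the crux of any complete proof.
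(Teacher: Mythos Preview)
The paper does not prove this statement: it is explicitly stated as a conjecture (attributed to Kuijlaars and Saff \cite{KS98}) and no proof or proof sketch is offered anywhere in the text. There is therefore nothing in the paper to compare your proposal against.

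That said, your outline is an accurate summary of the expected shape of any eventual proof, and you are candid that it is a \emph{reduction} rather than a proof. The upper bound via an explicit area-preserving hexagonal competitor together with a jellium/renormalized-energy computation is the natural strategy, and your identification of step~(iii) --- global (not merely lattice-restricted) optimality of $\Lambda_2$ for the planar renormalized Riesz energy --- as the genuine obstruction is exactly right: this crystallization problem is open for the range $-2<s<2$, and without it the program stalls at Theorem~\ref{s1theorem}. One minor caution on the upper-bound side: for the equal-area icosahedral points the paper's own numerics (Figures~\ref{s1asymptotics2} and~\ref{sneg1asymptotics2}) suggest their second-order term does \emph{not} match the conjectured coefficient, so that particular competitor may not in fact achieve the upper bound you need; a more carefully tailored hexagonal patch construction would likely be required.
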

  Heuristically, the second order coefficient corresponds to the Voronoi decomposition of minimal energy points being composed primarily of close to regular hexagons. A characterization of asymptotically optimal point sets is due to Leopardi \cite{s1char}.
  
  \begin{theorem}
  \label{s1char}
  If a well-separated sequence of configurations $\left\{\omega_N\right\}_{N=2}^\infty$ is equidistributed, then it is asymptotically optimal for $0<s<2$,
  \end{theorem}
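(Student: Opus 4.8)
The plan is to show that $E_s(\omega_N)=\mathcal{I}_s[\sigma]\,N^2+o(N^2)$. By Theorem~\ref{s1theorem} we have $c_sN^{1+s/2}\le\mathcal{E}_s(N)-\mathcal{I}_s[\sigma]N^2\le C_sN^{1+s/2}$ with $c_s>0$, so (since $1+s/2<2$) $\mathcal{E}_s(N)\sim\mathcal{I}_s[\sigma]N^2$ and, in particular, $\mathcal{E}_s(N)\ge\mathcal{I}_s[\sigma]N^2$; hence establishing $E_s(\omega_N)=\mathcal{I}_s[\sigma]N^2+o(N^2)$ forces $E_s(\omega_N)/\mathcal{E}_s(N)\to1$, which is the asserted asymptotic optimality. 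The lower bound $E_s(\omega_N)\ge\mathcal{E}_s(N)\ge\mathcal{I}_s[\sigma]N^2$ is automatic, so everything comes down to the upper estimate $\limsup_{N\to\infty}N^{-2}E_s(\omega_N)\le\mathcal{I}_s[\sigma]$. I would prove this by fixing a small $r>0$ and splitting $E_s(\omega_N)=\Sigma_{<r}(N)+\Sigma_{\ge r}(N)$ according to whether $|x-y|<r$ or $|x-y|\ge r$.

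For the far pairs I would invoke equidistribution. The truncated kernel $K_r(x,y):=\min\bigl(|x-y|^{-s},r^{-s}\bigr)$ is continuous and bounded on $\mathbb{S}^2\times\mathbb{S}^2$, so weak-star convergence $\nu_N\to\sigma$ (which forces $\nu_N\otimes\nu_N\to\sigma\otimes\sigma$) gives
\[
\frac{1}{N^2}\sum_{x,y\in\omega_N}K_r(x,y)=\iint K_r\,d\nu_N\,d\nu_N\ \longrightarrow\ \iint K_r\,d\sigma\,d\sigma\ \le\ \mathcal{I}_s[\sigma],
\]
the last inequality because $K_r\le|x-y|^{-s}$ and, for $0<s<2$, $\int_{\mathbb{S}^2}|x-y|^{-s}\,d\sigma(y)\equiv\mathcal{I}_s[\sigma]<\infty$. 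Since every pair at distance $\ge r$ has $|x-y|^{-s}=K_r(x,y)$, one has $\Sigma_{\ge r}(N)\le\sum_{x,y\in\omega_N}K_r(x,y)-Nr^{-s}$ (the subtracted term accounting for the $N$ diagonal contributions $K_r(x,x)=r^{-s}$), and therefore $\limsup_{N}N^{-2}\Sigma_{\ge r}(N)\le\mathcal{I}_s[\sigma]$ for every fixed $r>0$.

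The close-pair sum is the technical heart, and it is here that well-separation is used. Set $\delta_N:=\delta(\omega_N)\ge cN^{-1/2}$. A disjoint spherical-cap packing argument (the caps of chordal radius $\delta_N/2$ centred at the points of $\omega_N$ are pairwise disjoint, and such a cap has area exactly $\pi(\delta_N/2)^2$) shows that at most $K=O\bigl((r/\delta_N)^2\bigr)$ points of $\omega_N$ lie within distance $r$ of a given $x\in\omega_N$, and that the $k$-th nearest point of $\omega_N$ to $x$ lies at distance $\gtrsim\delta_N\sqrt{k}$. Hence
\[
\sum_{\substack{y\in\omega_N,\ y\neq x\\ |x-y|<r}}|x-y|^{-s}\ \lesssim\ \sum_{k=1}^{K}\bigl(\delta_N\sqrt{k}\,\bigr)^{-s}=\delta_N^{-s}\sum_{k=1}^{K}k^{-s/2}\ \lesssim\ \delta_N^{-s}K^{1-s/2}\ \lesssim\ r^{2-s}\delta_N^{-2},
\]
where the hypothesis $0<s<2$ enters precisely through $s/2<1$, which makes $\sum_{k\le K}k^{-s/2}=O(K^{1-s/2})$. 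Summing over the $N$ choices of $x$ and using $\delta_N^{-2}\le c^{-2}N$ yields $\Sigma_{<r}(N)\lesssim c^{-2}r^{2-s}N^2$, so $\limsup_{N}N^{-2}\Sigma_{<r}(N)\le C\,r^{2-s}$ with $C$ independent of $r$. Adding the two bounds, $\limsup_{N}N^{-2}E_s(\omega_N)\le\mathcal{I}_s[\sigma]+C\,r^{2-s}$ for all $r>0$; letting $r\to0$ (valid since $2-s>0$) completes the argument. The one genuinely delicate point is the packing estimate on nearest-neighbour distances together with the bookkeeping of constants for spherical caps; once that is in place, the rest is a routine truncation-plus-weak-star argument that applies uniformly to the whole range $0<s<2$.
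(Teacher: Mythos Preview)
The paper does not actually prove this theorem; it is stated with attribution to Leopardi and cited from \cite{s1char}, so there is no in-paper argument to compare against. Your proof is correct and follows what is essentially the standard route for such results: split the energy at a scale $r$, handle the far pairs by weak-star convergence of $\nu_N\otimes\nu_N$ against the bounded continuous truncated kernel $K_r$, and bound the near pairs via a packing estimate (the $k$-th nearest neighbour of any $x\in\omega_N$ lying at distance $\gtrsim\delta_N\sqrt{k}$), which converts well-separation into $\Sigma_{<r}(N)\lesssim r^{2-s}N^2$; then let $r\to 0$.

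One small remark: the inequality $\mathcal{E}_s(N)\ge\mathcal{I}_s[\sigma]N^2$ you assert in passing is actually false for $0<s<2$, since $\mathcal{E}_s(N)\le\mathbb{E}[E_s(\omega_N^{\text{rand}})]=\mathcal{I}_s[\sigma]N(N-1)<\mathcal{I}_s[\sigma]N^2$ (the sign convention in the paper's statement of Theorem~\ref{s1theorem} is imprecise on this point). This is harmless for your argument, however: you only need $\mathcal{E}_s(N)\sim\mathcal{I}_s[\sigma]N^2$ together with the trivial $E_s(\omega_N)\ge\mathcal{E}_s(N)$, and these combined with your upper estimate $\limsup N^{-2}E_s(\omega_N)\le\mathcal{I}_s[\sigma]$ already squeeze $E_s(\omega_N)/\mathcal{E}_s(N)\to 1$.
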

      
  \noindent A similar result for $s<0$ is also known (see for example \cite{MEBook}).
  
\begin{theorem}
A sequence of configurations $\left\{\omega_N\right\}_{N=2}^{\infty}$ is asymptotically optimal for $-2<s<0$ iff it is equidistributed.
\label{sneg1char}
\end{theorem}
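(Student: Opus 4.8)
The forward implication requires nothing new: Theorem~\ref{optimpliesequi} already shows that any sequence asymptotically optimal for the Riesz $s$-energy with $s>-2$, $s\neq0$, is equidistributed, and the range $-2<s<0$ is included there. So the plan is to prove the converse, equidistributed $\Rightarrow$ asymptotically optimal, by a soft argument using that for $-2<s<0$ the kernel $K_s(x,y)=-|x-y|^{-s}$ is bounded and continuous on all of $\mathbb{S}^2\times\mathbb{S}^2$ (since $0\le|x-y|\le2$ and $-s>0$) and, in particular, vanishes on the diagonal.

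Given an equidistributed sequence $\{\omega_N\}$, set $\nu_N:=\frac1N\sum_{x\in\omega_N}\delta_x$, so $\nu_N\to\sigma$ weak-$*$ by definition. Since $K_s(x,x)=0$ the diagonal terms drop out and one has the exact identity
\[
E_s(\omega_N)=\sum_{\substack{x,y\in\omega_N\\ x\neq y}}K_s(x,y)=N^2\int_{\mathbb{S}^2}\!\int_{\mathbb{S}^2}K_s(x,y)\,\mathrm{d}\nu_N(x)\,\mathrm{d}\nu_N(y).
\]
The first real step is to push the weak-$*$ limit through the double integral: $\nu_N\to\sigma$ weak-$*$ implies $\nu_N\times\nu_N\to\sigma\times\sigma$ weak-$*$ on the compact product $\mathbb{S}^2\times\mathbb{S}^2$ (test against products $f(x)g(y)$, for which $\int f\otimes g\,\mathrm{d}(\nu_N\times\nu_N)=(\int f\,\mathrm{d}\nu_N)(\int g\,\mathrm{d}\nu_N)$ converges to $(\int f\,\mathrm{d}\sigma)(\int g\,\mathrm{d}\sigma)$; such functions span a dense subalgebra of $C(\mathbb{S}^2\times\mathbb{S}^2)$ by Stone--Weierstrass, and since every measure involved is a probability measure the convergence passes to all of $C(\mathbb{S}^2\times\mathbb{S}^2)$). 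Taking the test kernel to be $K_s$ gives $\int\!\int K_s\,\mathrm{d}\nu_N\,\mathrm{d}\nu_N\to\mathcal{I}_s[\sigma]$, hence $E_s(\omega_N)=\mathcal{I}_s[\sigma]\,N^2+o(N^2)$.

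The second step is to know $\mathcal{E}_s(N)$ to the same order. I would quote Wagner's two-sided estimate (Theorem~\ref{s1theorem}): $\mathcal{E}_s(N)=\mathcal{I}_s[\sigma]\,N^2+O(N^{1+s/2})$, and since $-2<s<0$ forces $1+s/2<1$, the error is $o(N^2)$. (One can do without the sharp rate: the bound $\mathcal{E}_s(N)\ge\mathcal{I}_s[\sigma]N^2$ follows from $\sigma$ minimizing the continuous $K_s$-energy together with $K_s\equiv0$ on the diagonal, while the matching upper estimate comes from evaluating the first step on any equidistributed configuration.) As $\mathcal{I}_s[\sigma]\neq0$, dividing the two asymptotics gives $E_s(\omega_N)/\mathcal{E}_s(N)\to1$, i.e.\ asymptotic optimality, which completes the converse and hence Theorem~\ref{sneg1char}.

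I do not expect a genuine obstacle here: the argument is entirely soft, and the only point needing care is the weak-$*$ continuity of $\mu\mapsto\int\!\int K_s\,\mathrm{d}\mu\,\mathrm{d}\mu$, which rests solely on $K_s$ being a bounded \emph{continuous} kernel for $-2<s<0$. That is exactly the property lost when $s>0$ or $s=0$: the diagonal singularity then lets an equidistributed sequence dissipate an amount of energy comparable to the whole second-order term through near-collisions, which is why Theorem~\ref{s1char} and Conjecture~\ref{logequiconj} must add well-separation and do real quantitative work. The clean equivalence on $-2<s<0$ is a manifestation of the kernel's boundedness.
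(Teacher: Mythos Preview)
Your argument is correct. The paper does not supply its own proof of Theorem~\ref{sneg1char}; it merely cites \cite{MEBook} with the remark ``A similar result for $s<0$ is also known.'' What you have written is exactly the standard proof one finds in that reference: the forward direction is Theorem~\ref{optimpliesequi}, and the converse exploits that for $-2<s<0$ the kernel $K_s(x,y)=-|x-y|^{-s}$ is continuous on $\mathbb{S}^2\times\mathbb{S}^2$ with $K_s(x,x)=0$, so that $E_s(\omega_N)=N^2\iint K_s\,\mathrm d\nu_N\,\mathrm d\nu_N$ exactly and weak-$*$ convergence of $\nu_N\times\nu_N$ to $\sigma\times\sigma$ passes through the integral. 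Your two options for the denominator---either quoting Wagner (Theorem~\ref{s1theorem}) or the softer sandwich $\mathcal I_s[\sigma]N^2\le\mathcal E_s(N)\le E_s(\omega_N)$ from the minimality of $\sigma$ and any equidistributed test sequence---are both valid, and the second is slightly more self-contained. The closing paragraph correctly identifies why the range $s>0$ needs the separation hypothesis of Theorem~\ref{s1char}.
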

\noindent Theorems \ref{s1char} and \ref{sneg1char} are the analogs of Conjecture \ref{logequiconj} for $-2<s<2$, $s\neq 0$.

    In the particular case of $s=-1$, the problem of minimizing energy becomes that of maximizing sums of distances. A characterization of minimal configurations for this case is due to Stolarsky \cite{Stolarsky} (see also \cite{BDStol}) and is generalized by Brauchart and Dick in \cite{StoGen}. Let $V_t(\textbf{z}) = \left\{\textbf{x}\ |\ \textbf{x}\cdot\textbf{z}\geq t\right\}$ denote the spherical cap of height $1-t$ centered at $\textbf{z}\in\mathbb{S}^2$. The \textit{$L_2$ discrepancy} of a configuration $\omega_N$ is defined to be
        
        \[D_{L_2}(\omega_N):= \bigg(\int_{-1}^{1}\int_{\mathbb{S}^2}\bigg(\frac{|\omega_N\cap V_t(\textbf{z})|}{N}-\sigma(V_t(\textbf{z}))\bigg)^2 d\sigma(\textbf{z})dt \bigg)^{1/2}.\]
        
        \begin{theorem}
        \textbf{Stolarsky Invariance Principle.} For any $\omega_N\subset\mathbb{S}^2$,
        \[\frac{1}{N^2}\sum_{i\neq j}|\textbf{x}_i-\textbf{x}_j| + 4D_{L_2}(\omega_N)^2 = \mathcal{I}_{-1}[\sigma] = 4/3.\]
        \label{Stolarsky Invariance}
        \end{theorem}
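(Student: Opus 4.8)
The plan is to expand the $L_2$ discrepancy integral directly and recognize the cross term as (up to constants) the sum of mutual distances. First I would write
\[
D_{L_2}(\omega_N)^2 = \int_{-1}^{1}\int_{\mathbb{S}^2}\left(\frac{|\omega_N\cap V_t(\mathbf{z})|}{N}-\sigma(V_t(\mathbf{z}))\right)^2 d\sigma(\mathbf{z})\,dt,
\]
and expand the square into three pieces: a double sum $\frac{1}{N^2}\sum_{i,j}\int\int \mathbf{1}[\mathbf{x}_i\in V_t(\mathbf{z})]\mathbf{1}[\mathbf{x}_j\in V_t(\mathbf{z})]$, a cross term $-\frac{2}{N}\sum_i \int\int \mathbf{1}[\mathbf{x}_i\in V_t(\mathbf{z})]\sigma(V_t(\mathbf{z}))$, and a constant term $\int\int \sigma(V_t(\mathbf{z}))^2$. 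The key observation is that the indicator $\mathbf{1}[\mathbf{x}\in V_t(\mathbf{z})] = \mathbf{1}[\mathbf{x}\cdot\mathbf{z}\geq t]$, so each inner integral over $\mathbf{z}$ and $t$ of a product of two such indicators reduces to computing $\sigma$-measures of intersections of two spherical caps as a function of the geodesic (or chordal) distance between their centers.

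Next I would evaluate the relevant kernel. Define $K(\mathbf{x}_i,\mathbf{x}_j):=\int_{-1}^{1}\int_{\mathbb{S}^2}\mathbf{1}[\mathbf{x}_i\cdot\mathbf{z}\geq t]\,\mathbf{1}[\mathbf{x}_j\cdot\mathbf{z}\geq t]\,d\sigma(\mathbf{z})\,dt$. By rotational invariance this depends only on $|\mathbf{x}_i-\mathbf{x}_j|$, and a direct computation (swapping the order of integration, integrating out $t$ first, then parametrizing $\mathbf{z}$ by its inner products with $\mathbf{x}_i$ and $\mathbf{x}_j$) yields an explicit affine-in-distance formula, schematically $K(\mathbf{x}_i,\mathbf{x}_j) = \alpha - \beta|\mathbf{x}_i-\mathbf{x}_j|$ for universal constants $\alpha,\beta>0$; one must also pin down the diagonal value $K(\mathbf{x},\mathbf{x})$ and the constant term $\int\int\sigma(V_t(\mathbf{z}))^2\,d\sigma\,dt$ and the cross-term integral $\int\int\mathbf{1}[\mathbf{x}\cdot\mathbf{z}\geq t]\sigma(V_t(\mathbf{z}))\,d\sigma\,dt$. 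Substituting these back, the $\alpha$'s and the constant pieces cancel against the cross term and the $\sigma(V_t)^2$ term, leaving exactly $4D_{L_2}(\omega_N)^2 = \mathcal{I}_{-1}[\sigma] - \frac{1}{N^2}\sum_{i\neq j}|\mathbf{x}_i-\mathbf{x}_j|$; matching the diagonal terms forces the normalization to produce $\mathcal{I}_{-1}[\sigma]=4/3$, which is consistent with the earlier formula $\mathcal{I}_s[\sigma]=2^{1-s}/(2-s)$ at $s=-1$.

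The main obstacle is the explicit computation of the cap-intersection kernel $K$ and verifying that it is exactly affine in the chordal distance with the right constants; this is the crux of the whole identity and the only place real work happens. One has to carefully set up the double integral over the center $\mathbf{z}\in\mathbb{S}^2$ and the height parameter $t\in[-1,1]$, exchange orders of integration to handle the two nested indicator constraints $\mathbf{x}_i\cdot\mathbf{z}\geq t$ and $\mathbf{x}_j\cdot\mathbf{z}\geq t$, and reduce to a one-dimensional integral whose value is linear in $\sqrt{2-2\mathbf{x}_i\cdot\mathbf{x}_j}=|\mathbf{x}_i-\mathbf{x}_j|$. Everything after that—collecting the three expanded terms, cancelling the constants, and reading off $4/3$—is bookkeeping. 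An alternative, slicker route that avoids some of this is to invoke the known reproducing-kernel / Fourier–Legendre interpretation: the signed cap measure difference is, in an appropriate Hilbert space, represented so that $\langle\,\cdot\,,\,\cdot\,\rangle$ reproduces a constant multiple of the distance kernel; but since that machinery is not set up in the excerpt, I would carry out the direct integral computation and keep the proof self-contained.
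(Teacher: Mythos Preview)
The paper does not actually prove this theorem; it is stated with attribution to Stolarsky \cite{Stolarsky} (see also \cite{BDStol}, \cite{StoGen}) and used as a tool, with no proof appearing in Section~4 or elsewhere. So there is nothing in the paper to compare your argument against.

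That said, your outline is correct and is essentially the standard proof. The one place where you are vague---``a direct computation \ldots yields an explicit affine-in-distance formula''---is in fact short: integrating in $t$ first gives
\[
K(\mathbf{x},\mathbf{y})=\int_{\mathbb{S}^2}\bigl(1+\min(\mathbf{x}\cdot\mathbf{z},\mathbf{y}\cdot\mathbf{z})\bigr)\,d\sigma(\mathbf{z}),
\]
and the identity $\min(a,b)=\tfrac{1}{2}(a+b)-\tfrac{1}{2}|a-b|$ together with $\int_{\mathbb{S}^2}|\mathbf{u}\cdot\mathbf{z}|\,d\sigma(\mathbf{z})=\tfrac{1}{2}|\mathbf{u}|$ immediately gives $K(\mathbf{x},\mathbf{y})=1-\tfrac{1}{4}|\mathbf{x}-\mathbf{y}|$. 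With this in hand the bookkeeping you describe goes through exactly as you say, and the constant $4/3$ falls out. There is no genuine obstacle here; you just need to execute that one integral rather than gesture at it.
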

        
        \noindent This gives the immediate corollary (see also \cite{BrauStol}):
        \begin{corollary}
        For a configuration $\omega_N$, $E_{-1}(\omega_N) = \mathcal{E}_{-1}(N)$ iff $\omega_N$ minimizes $L_2$ discrepancy.
        \end{corollary}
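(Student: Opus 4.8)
The plan is to obtain the corollary as a formal consequence of the Stolarsky Invariance Principle (Theorem \ref{Stolarsky Invariance}), with no additional estimates required; we may assume that principle as stated. The first step is to record the sign convention for the Riesz kernel at $s=-1$. Since $K_{-1}(\textbf{x},\textbf{y}) = -|\textbf{x}-\textbf{y}|$, the defining formula \eqref{energy_def} gives
\[E_{-1}(\omega_N) = \sum_{i\neq j} K_{-1}(\textbf{x}_i,\textbf{x}_j) = -\sum_{i\neq j}|\textbf{x}_i-\textbf{x}_j|,\]
so minimizing $E_{-1}$ is the same as maximizing the sum of pairwise distances.

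The second step is to substitute this identity into the Stolarsky relation
\[\frac{1}{N^2}\sum_{i\neq j}|\textbf{x}_i-\textbf{x}_j| + 4\,D_{L_2}(\omega_N)^2 = \frac{4}{3},\]
which yields, after rearranging,
\[E_{-1}(\omega_N) = 4N^2\,D_{L_2}(\omega_N)^2 - \frac{4}{3}N^2.\]
For fixed $N$ the factor $4N^2$ is a positive constant and the additive term $-\tfrac{4}{3}N^2$ is independent of $\omega_N$, so $\omega_N \mapsto E_{-1}(\omega_N)$ is a strictly increasing function of $D_{L_2}(\omega_N)^2$, hence of $D_{L_2}(\omega_N)\ge 0$. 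Consequently $E_{-1}(\omega_N)$ equals its minimum $\mathcal{E}_{-1}(N)$ precisely when $D_{L_2}(\omega_N)$ equals its minimum over $N$-point configurations, which is the claimed equivalence.

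There is no real obstacle here: the only point meriting a word of care is that the infimum $\mathcal{E}_{-1}(N)$ is attained (as recorded just after the definition of $\text{E}_K$), so that the ``iff'' is a statement between two nonempty minimizer sets; equivalently, one can phrase the whole argument at the level of the scalar values $E_{-1}(\omega_N)$ and $D_{L_2}(\omega_N)$ and invoke the monotone affine relation displayed above. The corollary then follows immediately.
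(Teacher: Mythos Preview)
Your argument is correct and is exactly the approach the paper intends: the corollary is stated there as an ``immediate'' consequence of the Stolarsky Invariance Principle, and you have simply spelled out that immediacy by noting $E_{-1}(\omega_N)=-\sum_{i\neq j}|\textbf{x}_i-\textbf{x}_j|$ and reading off the monotone affine relation $E_{-1}(\omega_N)=4N^2D_{L_2}(\omega_N)^2-\tfrac{4}{3}N^2$.
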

  
  Figure \ref{s1asymptotics1} graphs the Coulomb energy of the point sets to first order. From Theorem \ref{s1char}, $\lim_{N\rightarrow\infty} E_1(\omega_N)/N^2 = \mathcal{I}_s[\sigma] = 1$ for all the point sets except the radial icosahedral, cubed sphere, and Hammersley nodes. As in the logarithmic case, the energies of the radial icosahedral and cubed sphere nodes appear to have $N^2$ leading order term but incorrect coefficient. The plot is inconclusive on whether the behavior of the Hammersley nodes is asymptotically optimal. Figure \ref{s1asymptotics2} shows the second order $N^{3/2}$ term from Conjecture \ref{s1conj}. The energies of the generalized spiral, Fibonacci spiral, zonal equal area, HEALPix, and octahedral nodes all appear to converge to an $N^{3/2}$ term that is different from the conjectured value. None of these point sets have a regular hexagonal lattice structure as $N$ gets large. Again the energies of equal area icosahedral points have a different second order term. The Hammersley nodes don't have low enough energy to appear on the plot. The asymptotic energy of the log nodes seems to have the same second order coefficient as the Coulomb nodes and both point sets support Conjecture \ref{s1conj}. As in the logarithmic energy case, the generalized spiral and zonal equal area points perform the best of the algorithmically generated points.
  
       \begin{figure}
       \centering
       \makebox[\textwidth][c]{\includegraphics[width=1.4\textwidth]{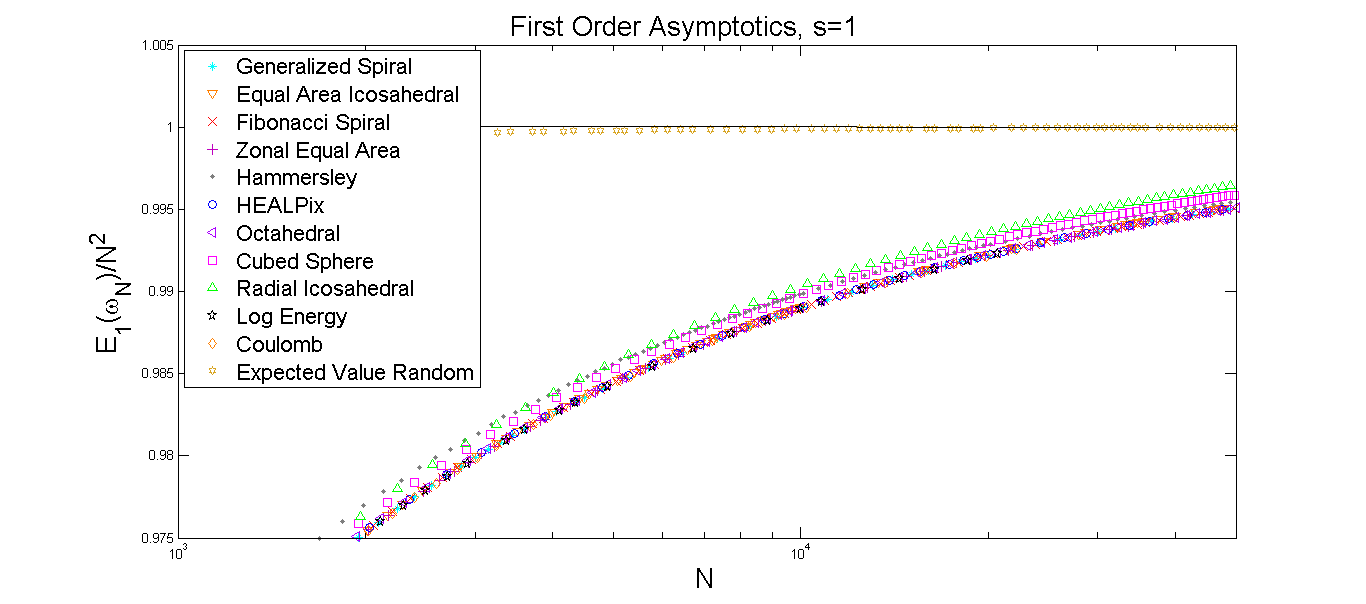}}
       \caption{First order asymptotics for $s = 1$. The solid black line is the known coefficient from Theorem \ref{s1theorem}.}
         \label{s1asymptotics1}
       \end{figure}
       
      \begin{figure}
       \centering
       \makebox[\textwidth][c]{\includegraphics[width=1.4\textwidth]{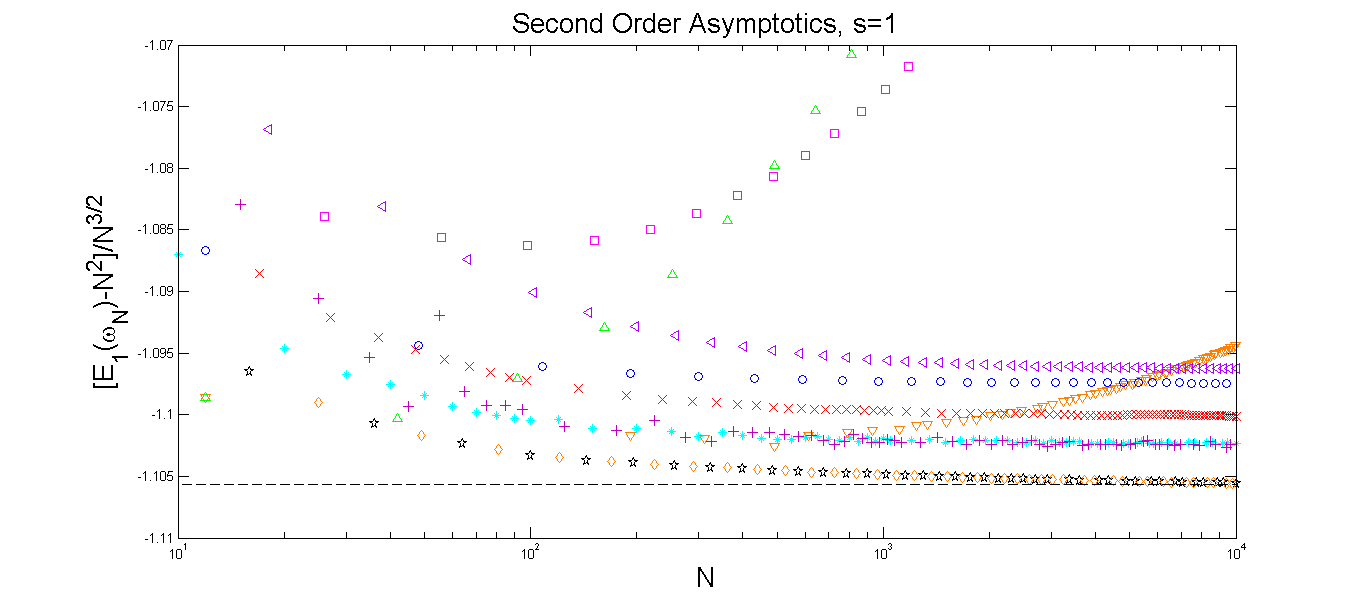}}
       \caption{Second order asymptotics for $s = 1$ (with the legend of Figure \ref{s1asymptotics1}). The dashed line is the conjectured value from \cite{KS98}.}
         \label{s1asymptotics2}
       \end{figure}
    
     \begin{figure}
      \centering
      \makebox[\textwidth][c]{\includegraphics[width=1.4\textwidth]{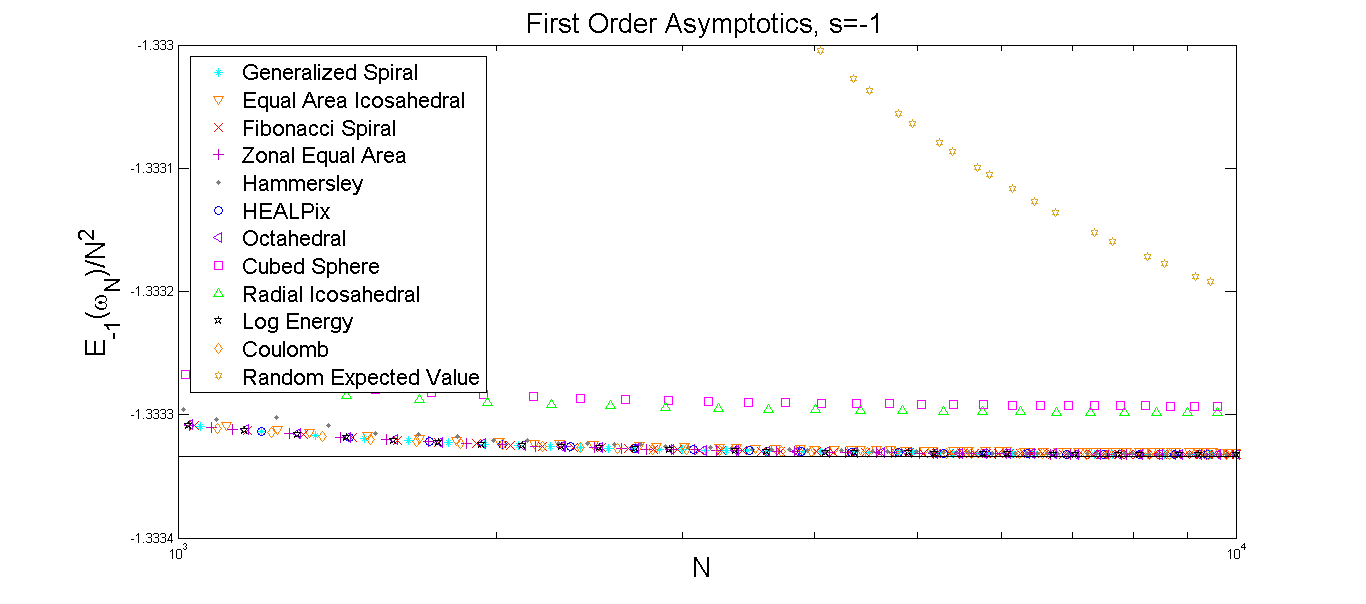}}
      \caption{First order asymptotics for $s = -1$. The solid line follows from the Stolarsky invariance principle.}
        \label{sneg1asymptotics1}
      \end{figure}
      
       \begin{figure}
        \centering
        \makebox[\textwidth][c]{\includegraphics[width=1.4\textwidth]{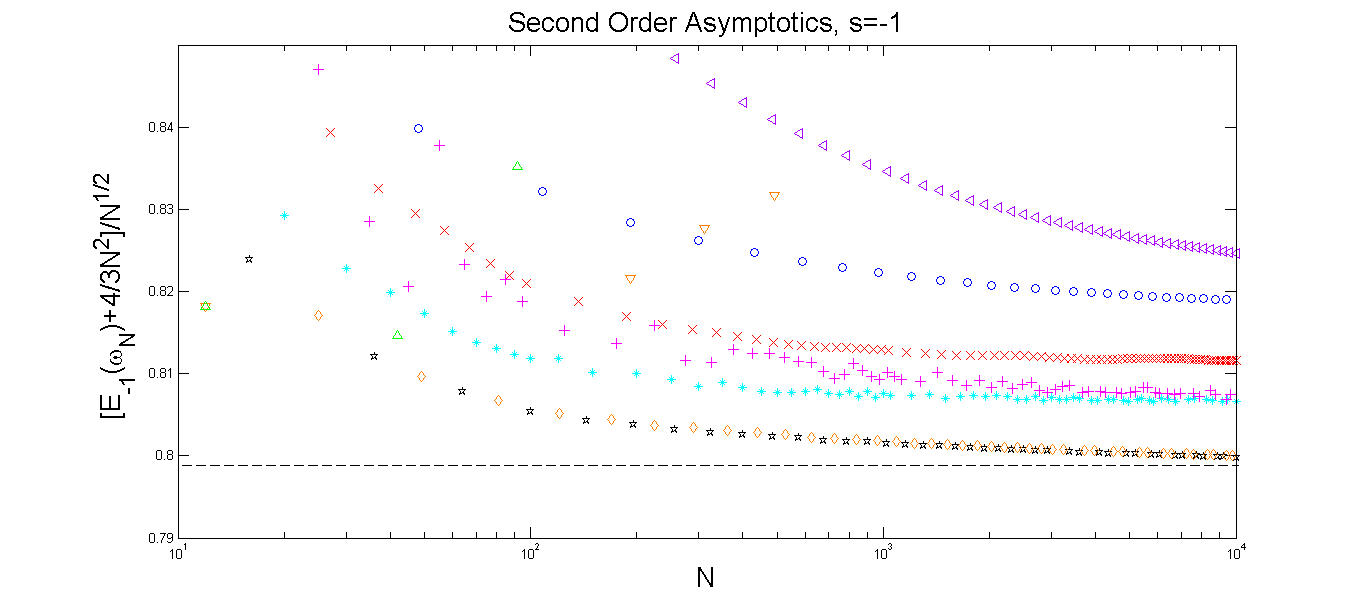}}
        \caption{Second order asymptotics for $s = -1$ (with the legend of Figure \ref{sneg1asymptotics1}). The dashed line is from conjecture \ref{s1conj}.}
          \label{sneg1asymptotics2}
        \end{figure}
    
    Figure \ref{sneg1asymptotics1} displays $E_{-1}(\omega_N)/N^2$ for the point sets. Again the radial icosahedral and cubed sphere points have the correct order though they are not asymptotically optimal. For $s=-1$ The second term in the minimal energy expansion is of known order $\sqrt{N}$ with conjectured coefficient $\approx 0.7985$. As shown in Figure \ref{sneg1asymptotics2}, the behavior of the second order of the asymptotic energy of the point sets resembles that of their log and Coulomb energies. The minimal Coulomb and log energy points perform the best and support Conjecture \ref{s1conj}. The asymptotic energies of the octahedral, HEALPix, Fibonacci, zonal equal area, and generalized spiral appear to converge to the right second order and incorrect coefficient, and the latter two configurations have the lowest energy.

  \subsection{Riesz Potential, $\textbf{\textit{s=2}}$}

\

  Less is known about the case $s=2$. The first order term was proved in \cite{KS98}.
  
  \begin{theorem}
  \[\lim_{N\rightarrow\infty}\frac{\mathcal{E}_2(N)}{N^2\log N} = \frac{1}{4}.\]
  \label{s2expansion}
  \end{theorem}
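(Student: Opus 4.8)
\emph{Proof proposal.} The plan is to prove matching bounds $\mathcal{E}_2(N)=\bigl(\tfrac14+o(1)\bigr)N^2\log N$. The exponent $s=2$ is critical on $\mathbb{S}^2$ (the integral $\mathcal{I}_2[\mu]$ diverges for every $\mu$), and the logarithm arises because the kernel effectively has to be cut off at the nearest-neighbour scale $N^{-1/2}$; accordingly both directions are organised around a truncation of $|x-y|^{-2}$.

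\emph{Upper bound.} I would take $\omega_N=\{x_i\}_{i=1}^N$ to consist of one point in each cell of an \emph{exactly} equal-area, diameter-bounded partition $\{W_i\}$ of $\mathbb{S}^2$ --- e.g.\ the Rakhmanov--Saff--Zhou partition \cite{RSZ94} with its centres (the zonal equal-area points), for which $\sigma(W_i)=1/N$, $\operatorname{diam}W_i\le CN^{-1/2}$ \cite{Zhou}, and $\delta(\omega_N)\ge cN^{-1/2}$ (Proposition~\ref{zonal quasi}). Fix a slowly growing threshold $t_N\to\infty$ with $t_N^2=o(\log N)$ (say $t_N=(\log N)^{1/4}$) and split $E_2(\omega_N)=\sum_{i\ne j}|x_i-x_j|^{-2}$ into \emph{near} pairs ($|x_i-x_j|\le t_N N^{-1/2}$) and \emph{far} pairs. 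Separation forces each point to have only $O(t_N^2)$ near neighbours and each near pair to contribute at most $\delta(\omega_N)^{-2}\le c^{-2}N$, so the near pairs total $O(N^2 t_N^2)=o(N^2\log N)$. For a far pair, $\operatorname{diam}W_i+\operatorname{diam}W_j=o(\operatorname{dist}(W_i,W_j))$, whence $|x_i-x_j|^{-2}\le(1+o(1))\sup_{x\in W_i,\,y\in W_j}|x-y|^{-2}\le(1+o(1))N^2\iint_{W_i\times W_j}|x-y|^{-2}\,d\sigma\,d\sigma$; summing, the far pairs contribute at most $(1+o(1))N^2\iint_{|x-y|\ge(1-o(1))t_N N^{-1/2}}|x-y|^{-2}\,d\sigma\,d\sigma$. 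Using that $x\cdot y$ is uniform on $[-1,1]$ under $\sigma$ and $|x-y|^2=2-2x\cdot y$, one computes $\iint_{|x-y|\ge\delta}|x-y|^{-2}\,d\sigma\,d\sigma=\tfrac12\log(2/\delta)$, which for $\delta\asymp t_N N^{-1/2}$ equals $\tfrac14\log N-\tfrac12\log t_N+O(1)=(\tfrac14+o(1))\log N$. Hence $E_2(\omega_N)\le(\tfrac14+o(1))N^2\log N$.

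\emph{Lower bound.} For $M>0$ put $K_M(x,y)=\min(|x-y|^{-2},M)$. Since $K_M(x_i,x_i)=M$ and $K_M\le|\cdot|^{-2}$ off the diagonal, for \emph{every} configuration $E_2(\omega_N)\ge\sum_{i\ne j}K_M(x_i,x_j)=N^2\!\iint K_M\,d\nu_N\,d\nu_N-NM\ge N^2 W_M-NM$, where $\nu_N$ is the normalised counting measure and $W_M:=\inf_\mu\iint K_M\,d\mu\,d\mu$ over Borel probability measures on $\mathbb{S}^2$; thus $\mathcal{E}_2(N)\ge N^2 W_M-NM$. The crux is to bound $W_M$ below with the \emph{sharp} constant, and for this I would minorise $K_M$ by an explicit positive-definite kernel: from $\min(a,M)\ge aM/(a+M)$ with $a=|x-y|^{-2}$ one gets $K_M(x,y)\ge h_M(x\cdot y)$ with $h_M(u)=\bigl((2+M^{-1})-2u\bigr)^{-1}$. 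Expanding, $h_M(u)=\tfrac{1}{2+M^{-1}}\sum_{n\ge0}\bigl(\tfrac{2}{2+M^{-1}}\bigr)^{n}u^{n}$ has nonnegative Taylor coefficients and radius of convergence $>1$, so it is absolutely monotonic on $[-1,1]$, hence positive definite on $\mathbb{S}^2$; consequently $\sigma$ minimises $\iint h_M\,d\mu\,d\mu$, and $W_M\ge\iint h_M\,d\sigma\,d\sigma=\tfrac14\log(2M)+O(1)$. Choosing $M=N$ gives $\mathcal{E}_2(N)\ge\tfrac14 N^2\log N-O(N^2)$, and combined with the upper bound this yields $\mathcal{E}_2(N)/(N^2\log N)\to\tfrac14$.

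\emph{Main obstacle.} The delicate point is extracting the \emph{exact} constant $\tfrac14$. On the lower-bound side, a cruder minorant of $K_M$ (or a pigeonhole estimate of $(\mu\times\mu)\{|x-y|<r\}$) only yields $W_M\ge(c-o(1))\log M$ with $c<\tfrac14$; the value $\tfrac14$ is recovered precisely because $h_M$ is still a genuine positive-definite kernel, which pins the optimal measure to $\sigma$. On the upper-bound side, the near/far dissection must be run on a configuration whose pair-distance distribution matches $\sigma\times\sigma$ down to scale $N^{-1/2}$, which is why the exactly-equal-area, diameter-bounded, well-separated structure --- not mere equidistribution --- is needed, and why the threshold $t_N$ must be allowed to grow. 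Balancing the remainder $NM$ against the error in $W_M$ (and, symmetrically, the near-pair loss against the far-pair comparison) is the technical heart of the proof.
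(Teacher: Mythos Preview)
The paper does not prove Theorem~\ref{s2expansion}; it is quoted as a known result with a reference to Kuijlaars and Saff~\cite{KS98}. So there is no ``paper's own proof'' to compare against. That said, your proposal is a correct, self-contained argument and is worth a brief comment.

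Your upper bound---placing one point per cell of an exactly equal-area, diameter-bounded partition and splitting pairs at a growing threshold $t_NN^{-1/2}$---is the standard construction. The identity $\iint_{|x-y|\ge\delta}|x-y|^{-2}\,d\sigma\,d\sigma=\tfrac12\log(2/\delta)$ and the packing bound on near pairs (each point has $O(t_N^2)$ near neighbours, each contributing $O(N)$) are both correct, and the choice $t_N^2=o(\log N)$ balances the two pieces.

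For the lower bound, the truncation $K_M=\min(|x-y|^{-2},M)$ together with $\mathcal{E}_2(N)\ge N^2W_M-NM$ is again routine; the interesting step is how you recover the sharp constant $\tfrac14$ from $W_M$. Instead of trying to show that $K_M$ itself has nonnegative Legendre coefficients (awkward, since $\min(\cdot,M)$ destroys smoothness), you minorise it by $h_M(u)=((2+M^{-1})-2u)^{-1}$ via $\min(a,M)\ge aM/(a+M)$. This $h_M$ is visibly absolutely monotone on $[-1,1]$, hence positive definite on $\mathbb{S}^2$ by Schoenberg, so $\sigma$ is its energy minimiser and $W_M\ge\iint h_M\,d\sigma\,d\sigma=\tfrac14\log(4M+1)$ (your ``$\tfrac14\log(2M)+O(1)$'' is the same up to the harmless $O(1)$). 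Taking $M=N$ then loses only $O(N^2)$. This is a clean alternative to the dyadic-shell or Frostman-type arguments one sometimes sees for the critical exponent, and it makes transparent why exactly $\tfrac14$---not merely some positive constant---appears.
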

  
  It is not known whether the $N^2\log N$ term corresponds to any geometric property of the points. There is also no result analogous to Theorems \ref{Beltranthm} and \ref{s1char} giving a sufficient condition for a sequence of configurations to be asymptotically optimal.
  
  Figure \ref{s2asymptotics1} shows $E_2(\omega_N)/N^2\log N$. The Hammersley points do not appear to have asymptotically minimal energy. The behavior of the octahedral, HEALPix, Fibonacci, generalized spiral, zonal equal area, and equal area icosahedral points is not clear. Their energies remain well below the known coefficient but have larger energy than the Coulomb and log energy points. Calculations beyond $N=50,000$ are needed. The generalized spiral, zonal equal area, and equal area icosahedral points perform the best.
  
 \begin{figure}
  \centering
  \makebox[\textwidth][c]{\includegraphics[width=1.4\textwidth]{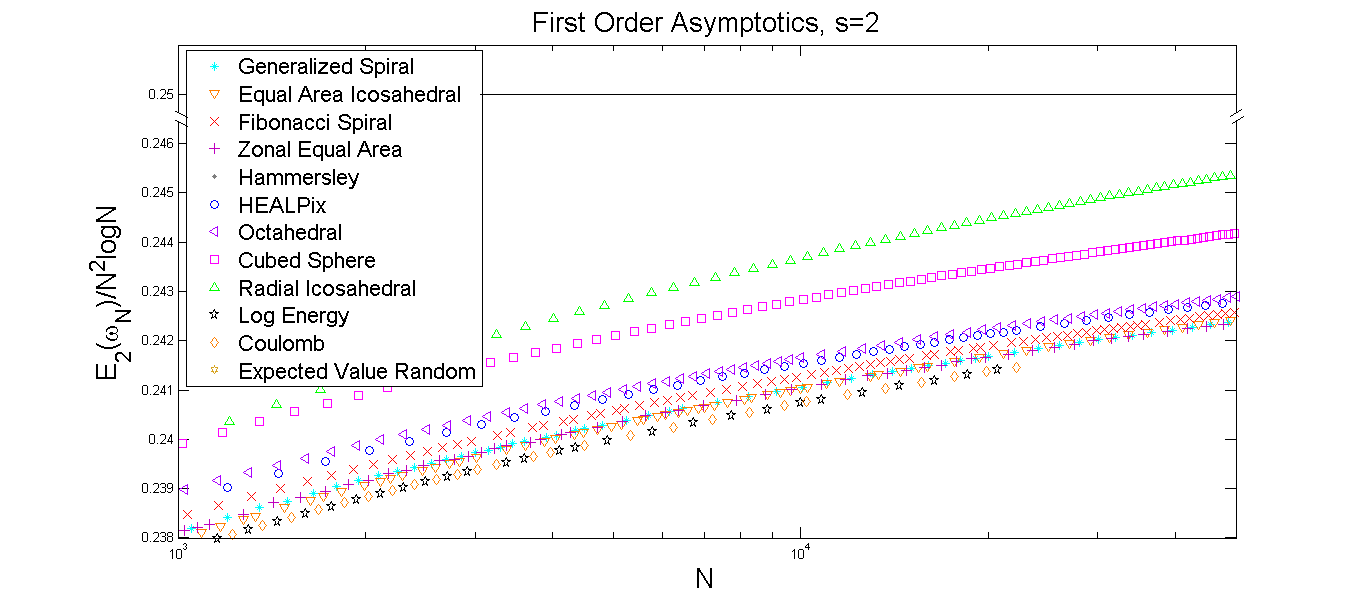}}
  \caption{First order asymptotics for $s = 2$. The break in the y-axis values is to enhance the separation between the configurations. The solid line is the known coefficient from Theorem \ref{s2expansion}.}
    \label{s2asymptotics1}
  \end{figure}  
  
  \
  
  \subsection{Riesz Potential, $\textbf{\textit{s}}>2$}

 \
  
      \begin{figure}
      \centering
      \makebox[\textwidth][c]{\includegraphics[width=1.4\textwidth]{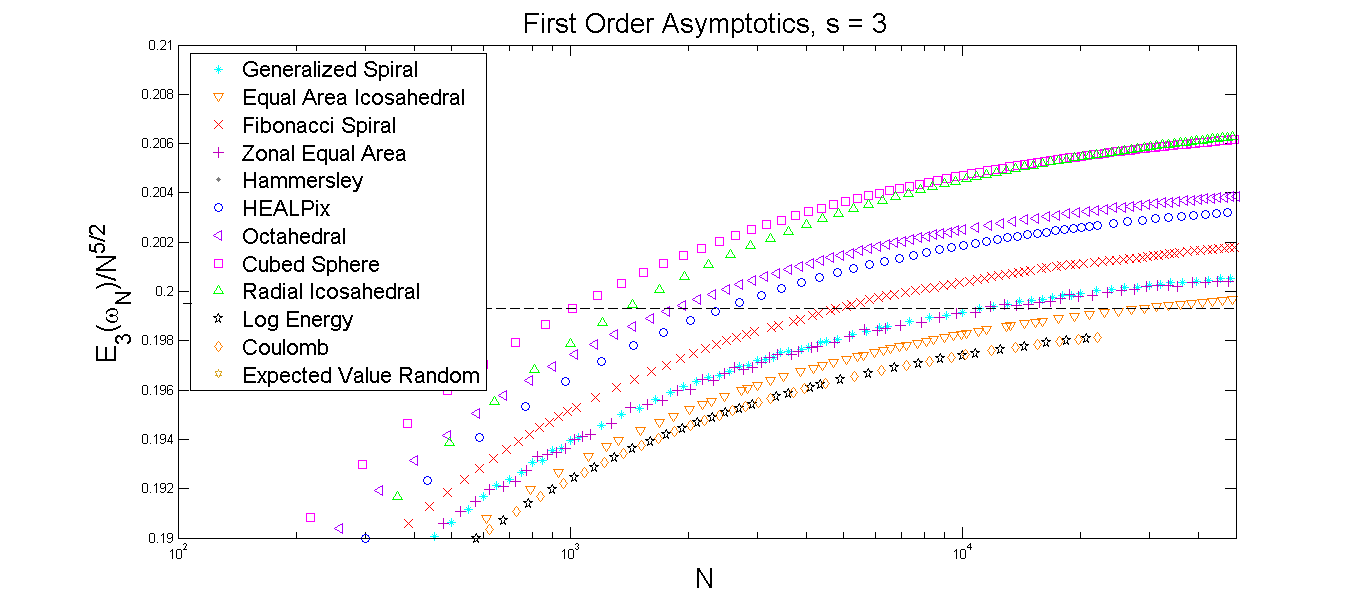}}
      \caption{First order asymptotics for $s = 3$. The dashed line is the conjectured coefficient.}
        \label{s3asymptotics1}
      \end{figure}
  
  In the hypersingular case $s>2$ the continuous energy integral is infinite for any probability measure $\mu$. Instead, the energy is known to be dominated by the nearest neighbor interactions of points as shown in a much more general result by Hardin and Saff \cite{HS2005}.
  
  \begin{theorem}
  For $s>2$, there are constants $C_s$ such that
  \[\lim_{N\rightarrow\infty}\frac{\mathcal{E}_s(N)}{N^{1+s/2}} = C_s,\ \ \ \ \ \ \ \ 0<C_s<\infty.\]
  \label{hypersingular}
  \end{theorem}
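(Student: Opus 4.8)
The plan is to bracket $\mathcal{E}_s(N)/N^{1+s/2}$ between a $\limsup$ coming from an explicit patchwork construction and a $\liminf$ valid for \emph{every} configuration, and to show both equal the same constant
\[
C_s=\frac{C_{s,2}}{(4\pi)^{s/2}},
\]
where $C_{s,2}$ is the analogous minimal-energy constant for the unit-area flat $2$-torus in $\mathbb{R}^2$. The first task is to produce $C_{s,2}$ itself: letting $\mathcal{L}_s(N)$ denote the minimal $s$-energy of $N$ points on a fixed unit-area flat torus, I would show $\lim_{N\to\infty}\mathcal{L}_s(N)/N^{1+s/2}$ exists in $(0,\infty)$ by a subadditivity (Fekete-type) argument: partition the torus into $m^2$ congruent subsquares, place a near-optimal $k$-point set in each (so $N=m^2k$), rescale, and note that for $s>2$ the interactions \emph{between} distinct subsquares are $o(N^{1+s/2})$. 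The key short-range estimate is that in a well-separated near-optimal set the number of points within distance $t$ of a given point is $O(t^2N)$ for $t\gtrsim N^{-1/2}$, so the energy at scale $t$ is $O(Nt^{2-s})$; since $s>2$, $t^{2-s}$ concentrates at the minimal scale $t\sim N^{-1/2}$ and the whole energy is controlled by nearest-neighbour scale. This yields $\mathcal{L}_s(m^2k)\le m^2\mathcal{L}_s(k)(1+o(1))$ and hence the limit.

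Next I transfer to $\mathbb{S}^2$ by local flattening. Fix $M$ and decompose $\mathbb{S}^2$ into $M$ equal-area curvilinear patches $W_1,\dots,W_M$ of area $4\pi/M$, each a bi-Lipschitz image of a planar square with bi-Lipschitz constant $1+\varepsilon_M$, $\varepsilon_M\to0$ as $M\to\infty$ (e.g.\ geodesic squares, or the equal-area pieces used earlier in the paper). For the upper bound, place $\approx N/M$ rescaled near-optimal planar points in each patch: the within-patch energy of $W_i$ is at most $(4\pi/M)^{-s/2}\mathcal{L}_s(N/M)(1+\varepsilon_M)$, the between-patch energy is $o(N^{1+s/2})$ by the estimate above, and summing over the $M$ patches the factors of $M$ cancel, giving
\[
\limsup_{N\to\infty}\frac{\mathcal{E}_s(N)}{N^{1+s/2}}\le\frac{C_{s,2}}{(4\pi)^{s/2}}(1+\varepsilon_M)\xrightarrow[M\to\infty]{}\frac{C_{s,2}}{(4\pi)^{s/2}}.
\]
For the lower bound, take an arbitrary $\omega_N$, set $N_i=|\omega_N\cap W_i|$, discard all (nonnegative) cross-patch interactions, bound each within-patch sum below by $(4\pi/M)^{-s/2}\mathcal{L}_s(N_i)(1-\varepsilon_M)$, and then use convexity of $t\mapsto t^{1+s/2}$ with $\sum_iN_i=N$ to get $\sum_iN_i^{1+s/2}\ge M^{-s/2}N^{1+s/2}$; this produces $\liminf_{N\to\infty}\mathcal{E}_s(N)/N^{1+s/2}\ge(4\pi)^{-s/2}C_{s,2}(1-\varepsilon_M)$, and $M\to\infty$ finishes.

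The main obstacle is the coordinated control of the two bi-Lipschitz comparisons: one must show that replacing the true chordal/geodesic distance on a small patch by the Euclidean distance of its planar model changes the $s$-energy by only a multiplicative $1+\varepsilon_M$, \emph{uniformly over all configurations of all sizes} supported in that patch, while at the same time establishing the genuine existence of $C_{s,2}$. A secondary technical point, again riding entirely on $s>2=\dim\mathbb{S}^2$, is to make the short-range estimate quantitative enough that the discarded cross-patch energy is $o(N^{1+s/2})$ with constants independent of $M$; for $s\le2$ the long-range tail is comparable to (or dominates) the near-neighbour part and the exponent $1+s/2$ would be wrong, so the whole scheme is specific to the hypersingular regime. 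By contrast, positivity $C_s>0$ is immediate (any $N$ points on $\mathbb{S}^2$ contain a pair at distance $O(N^{-1/2})$, iterate over disjoint groups) and finiteness $C_s<\infty$ follows from the construction; the real content is the equality of the limiting upper and lower constants. We note the theorem identifies $C_s$ only implicitly; the value $(\sqrt3/2)^{s/2}\zeta_{\Lambda_2}(s)(4\pi)^{-s/2}$ suggested by the hexagonal-lattice heuristic remains conjectural.
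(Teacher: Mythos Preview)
The paper does not prove this theorem at all: it is quoted as ``a much more general result by Hardin and Saff \cite{HS2005}'' and no argument is supplied. So there is no in-paper proof to compare against.

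That said, your outline is essentially the Hardin--Saff strategy from \cite{HS2005}: establish the planar (flat) constant $C_{s,2}$ by a subadditivity/Fekete argument, then transfer to the sphere by an equal-area, nearly-isometric patch decomposition, getting an upper bound from a patchwork construction and a lower bound by discarding cross-patch interactions and invoking convexity of $t\mapsto t^{1+s/2}$. The identification $C_s=C_{s,2}/(4\pi)^{s/2}$ and the role of the hypersingular condition $s>2$ in killing the cross-patch energy are exactly as in that reference.

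One genuine technical point you have glossed over in the lower bound: the inequality you want is $\mathcal{L}_s(N_i)\ge (C_{s,2}-\varepsilon)N_i^{1+s/2}$, but this only holds for $N_i$ sufficiently large, whereas for an \emph{arbitrary} configuration $\omega_N$ some cells could have $N_i$ bounded (or zero). Hardin and Saff handle this either by first proving that any sequence of minimizers is equidistributed (so that $N_i\sim N/M$ automatically), or by separating the small-$N_i$ cells and showing their total contribution is negligible; your sketch as written needs one of these two ingredients before the Jensen step is legitimate. Apart from that, and the routine but nontrivial verification that the bi-Lipschitz distortion of energy is uniform over all configurations in a patch, the proposal is a correct roadmap to the cited proof.
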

  \noindent One can consider the analytic continuation of $\mathcal{I}_s[\sigma]$, $0<s<2$, 
  
  \[V_s: = \frac{2^{1-s}}{2-s},\ \ \ \ s\in\mathbb{C}\setminus \left\{2\right\}.\]
  The following is analogous to Conjecture \ref{s1conj} (see \cite{MEBook}).
  
  \begin{conjecture}
  For $2<s<4$
    \[\mathcal{E}_s(N) = \frac{(\sqrt{3}/2)^{s/2}\zeta_{\Lambda_2}(s)}{(4\pi)^{s/2}}N^{1+s/2}+V_sN^2 + o(N^2).\]
    \label{s3conj}
  \end{conjecture}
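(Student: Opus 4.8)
\medskip
\noindent\emph{Toward a proof of Conjecture \ref{s3conj}.} The plan is to establish the two-term expansion by matching an upper bound, coming from an explicit ``locally triangular'' test configuration, against a lower bound valid for all configurations; the order $N^{1+s/2}$ of the leading term is already guaranteed by Theorem \ref{hypersingular}, so the substance of the conjecture is twofold: (a) the leading coefficient equals $C_s^{\Lambda_2}:=(\sqrt{3}/2)^{s/2}\zeta_{\Lambda_2}(s)/(4\pi)^{s/2}$, the Riesz $s$-energy per point of the triangular lattice $\Lambda_2$ rescaled to point density $N/(4\pi)$; and (b) the second coefficient is exactly $V_s=2^{1-s}/(2-s)$, i.e.\ the analytic continuation of $\mathcal{I}_s[\sigma]$ past its pole at $s=2$. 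What makes (b) plausible is a cancellation at an intermediate scale: splitting the pair-sum at a mesoscale $R=R(N)$ with $N^{-1/2}\ll R\ll 1$, the short-range (lattice) part contributes $C_s^{\Lambda_2}N^{1+s/2}-\tfrac{R^{2-s}}{2(s-2)}N^2$ up to lower-order errors --- the negative term being the lattice tail beyond radius $R$ --- while the long-range part is a Riemann sum for $N^2\!\int\!\!\int_{|x-y|>R}|x-y|^{-s}\,d\sigma\,d\sigma=V_sN^2+\tfrac{R^{2-s}}{2(s-2)}N^2+o(N^2)$, and the two $R^{2-s}N^2$ terms cancel identically, leaving $C_s^{\Lambda_2}N^{1+s/2}+V_sN^2$.

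For the upper bound I would make this precise with a concrete configuration: fix $R$, partition $\mathbb{S}^2$ into $\asymp R^{-2}$ nearly equal-area geodesic patches of diameter $\asymp R$, and fill each with an affine image of $\Lambda_2$ at density $N/(4\pi)$, discarding or repositioning the $O(\sqrt{N}R)$ points near each patch boundary. One then verifies that the within-patch sums reproduce the truncated lattice energy displayed above (using that on a patch of vanishing Euclidean size the chordal and geodesic metrics agree to the required order, and that the $12$ topologically forced defects and the sphere's curvature contribute lower-order amounts), that the between-patch sums are the claimed Riemann sum with discrepancy error $o(N^2)$, and that the boundary repositionings cost $O(N^{(1+s)/2}/R)=o(N^{1+s/2})$, which forces $R\gg N^{-1/2}$. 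Letting $R=R(N)\to 0$ slowly then yields $\mathcal{E}_s(N)\le C_s^{\Lambda_2}N^{1+s/2}+V_sN^2+o(N^2)$.

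For the lower bound, applied to an arbitrary $\omega_N$ and the same mesoscale $R$: the pairs at distance $>R$ contribute at least $N^2\!\int\!\!\int_{|x-y|>R}|x-y|^{-s}\,d\sigma\,d\sigma$ minus an error controlled by the spherical-cap discrepancy of $\omega_N$ (configurations of large discrepancy are far from optimal and can be excluded at the outset), hence at least $V_sN^2+\tfrac{R^{2-s}}{2(s-2)}N^2-o(N^2)$; the pairs at distance $\le R$ contribute, patch by patch, at least as much as a local packing / linear-programming bound in the spirit of Hardin and Saff's proof of Theorem \ref{hypersingular}, refined so that its constant is precisely $C_s^{\Lambda_2}$, hence at least $C_s^{\Lambda_2}N^{1+s/2}-\tfrac{R^{2-s}}{2(s-2)}N^2-o(N^{1+s/2})$. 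Summing the two contributions, the $R^{2-s}N^2$ terms cancel once more and the matching lower bound follows.

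The hard part is the short-range lower bound: proving that the local energy of an approximately optimal configuration is asymptotically no smaller than that of the triangular lattice of equal density is a crystallization-type assertion that is open for general $s\in(2,4)$. Among \emph{lattices} the optimality of $\Lambda_2$ is classical --- Montgomery's theta-transformation method applies to $\zeta_\Lambda(s)$ for $s>2$ since $t\mapsto t^{-s/2}$ is completely monotone --- so the genuine obstruction is the reduction from arbitrary point configurations to lattices (the same obstruction that leaves Conjecture \ref{s1conj} open). Without this ingredient the scheme above still goes through with Hardin and Saff's constant $C_s$ in place of $C_s^{\Lambda_2}$, which already proves what I view as the novel content of the conjecture: that the second term is exactly $V_sN^2$, the analytic continuation of $\mathcal{I}_s[\sigma]N^2$. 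That reduced statement, $\mathcal{E}_s(N)=C_sN^{1+s/2}+V_sN^2+o(N^2)$, seems within reach of the subtraction-of-singularity argument sketched here and would be the sensible first target. One should also expect the $o(N^2)$ estimates to demand extra care as $s\uparrow 4$, where $V_sN^2$ and the conjectural third term become comparable.
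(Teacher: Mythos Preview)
The statement is labeled a \emph{Conjecture} in the paper and is not proved there; the paper merely cites it (to \cite{MEBook}) and uses it to interpret the numerical plot in Figure~\ref{s3asymptotics1}. There is therefore no paper proof to compare your attempt against.

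Your outline correctly captures the expected architecture --- a mesoscale splitting with cancellation of the $R^{2-s}N^2$ tails between the truncated-lattice short-range part and the truncated-integral long-range part --- and you correctly name the real obstruction: the short-range lower bound is a crystallization statement (that near-minimizers are locally no better than the triangular lattice of equal density) which is open for all $s>2$. That is precisely why the statement remains a conjecture.

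However, your fallback claim --- that the ``reduced'' statement $\mathcal{E}_s(N)=C_sN^{1+s/2}+V_sN^2+o(N^2)$, with the unidentified constant $C_s$ of Theorem~\ref{hypersingular} in place of $C_s^{\Lambda_2}$, is a sensible first target that sidesteps crystallization --- does not hold up. For $2<s<4$ one has $1+s/2>2$, so any discrepancy between an upper-bound leading coefficient (your explicit construction gives $C_s^{\Lambda_2}$) and a lower-bound leading coefficient (any $C_s'\le C_s$ you can prove by local packing/linear-programming methods) produces an error of order $N^{1+s/2}\gg N^2$, which swallows the putative second term entirely. Isolating the $V_sN^2$ term therefore already requires the upper and lower leading constants to match \emph{exactly}, and that is the crystallization problem again. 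The second term cannot be established independently of the first.
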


    Figure \ref{s3asymptotics1} plots $E_3(\omega_N)/N^{5/2}$. The energies of most configurations seem to be going to the correct order but incorrect coefficient. The equal area icosahedral points outperform the spiral and zonal equal area points of the algorithmically generated configurations. This is expected because their Voronoi decomposition is closest to the regular hexagonal lattice. The log energy and Coulomb points again seem to be close to minimal and may converge to the conjectured $C_3\approx 0.199522$. The Hammersley points are not seen on the plot because their asymptotic energy does not appear to have first order $N^{5/2}$.
  
We conclude this section with the following observation: The apparent first and second order asymptotically minimal behavior of the energy of the Coulomb and log points for the potentials we have considered is striking and we suggest a conjecture.

\begin{conjecture}
The minimal logarithmic energy points and Coulomb points are asymptotically minimal to second term precision for all $-2<s<4$, $s\neq 0$. I.e., combined with Conjectures \ref{s1conj} and \ref{s3conj}, letting $\omega_N:=\omega_N^{log}$ or $\omega_N^{Coul}$

\[\lim_{N\rightarrow\infty}\frac{E_s(\omega_N)-\mathcal{I}_s[\sigma]N^2}{\mathcal{E}_s(N)-\mathcal{I}_s[\sigma]N^2} = 1\ \ \ or\ \ \ \lim_{N\rightarrow\infty}\frac{E_s(\omega_N)-\frac{(\sqrt{3}/2)^{s/2}\zeta_{\Lambda_2}(s)}{(4\pi)^{s/2}}N^{1+s/2}}{\mathcal{E}_s(N)-\frac{(\sqrt{3}/2)^{s/2}\zeta_{\Lambda_2}(s)}{(4\pi)^{s/2}}N^{1+s/2}}=1.\]

Furthermore, the Coulomb points are minimal in the logarithmic energy to the third order term. That is

\[\lim_{N\to\infty}\frac{E_{\log}(\omega_N^{Coul})-(1/2-\log2)N^2+1/2N\log N}{\mathcal{E}_{\log}(N)-(1/2-\log2)N^2+1/2N\log N}=1.\]
\label{newconj}
\end{conjecture}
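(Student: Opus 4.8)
\textbf{Proof strategy for Conjecture \ref{newconj}.} Since the assertion is itself conditional on Conjectures \ref{s1conj} and \ref{s3conj}, the plan is to reduce it to a single \emph{local crystallization} input together with a \emph{transference} argument across the range $-2<s<4$. The organizing principle is that, after subtracting the mean-field term $\mathcal{I}_s[\sigma]N^2$ (respectively $\mathcal{I}_{\log}[\sigma]N^2-\tfrac12N\log N$ in the logarithmic case), the surviving $N^{1+s/2}$ (respectively $N$) contribution is \emph{local}: it is an integral over $\mathbb{S}^2$ of a renormalized energy of the point cloud rescaled by $N^{1/2}$ and viewed near a typical point. Thus it suffices to (a) identify these local limiting fields for the minimal logarithmic and Coulomb configurations as the regular triangular lattice $\Lambda_2$, and (b) note that $\Lambda_2$ simultaneously realizes the conjectured second-order constant $\tfrac{(\sqrt3/2)^{s/2}\zeta_{\Lambda_2}(s)}{(4\pi)^{s/2}}$ for \emph{every} $s$ in the range. (The borderline value $s=2$ would be treated separately using the $N^2\log N$ normalization of Theorem \ref{s2expansion}.)

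First I would assemble the quantitative regularity needed to make the localization rigorous: uniform separation $\delta(\omega_N)\ge cN^{-1/2}$ (known for both families by Dahlberg and by Dragnev), quasi-uniformity (known for Coulomb points; for log points this would have to be established, presumably by feeding the second-order expansion of Betermin and Sandier, Theorem \ref{logexpansion}, into a potential-theoretic bound on the cap discrepancy), and a uniform local density estimate $c_1R^2\le\#\big(\omega_N\cap B(x,RN^{-1/2})\big)\le c_2R^2$ for $x\in\mathbb{S}^2$ and $R$ in a suitable range. Next I would split the energy: for $-2<s<2$, use the Wagner--Leopardi scheme behind Theorems \ref{s1theorem}, \ref{s1char}, and \ref{sneg1char} to absorb $\mathcal{I}_s[\sigma]N^2$ with an error controlled by the cap discrepancy, leaving a near-neighbour sum; for $2\le s<4$, use the Hardin--Saff localization from the proof of Theorem \ref{hypersingular}, which already expresses the energy as a Riemann sum of local lattice energies weighted by the limiting density. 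In all cases this produces a local energy functional $W$ — the jellium/renormalized energy studied by Sandier--Serfaty and Petrache--Serfaty and, for lattices, by Betermin — whose average over $\mathbb{S}^2$ sandwiches the normalized second-order energy from above and below.

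The heart of the argument is step (a). In the logarithmic (two-dimensional Coulomb) case I would run the renormalized-energy $\Gamma$-convergence machinery: minimality of $E_{\log}(\omega_N^{\log})$ forces the local empirical fields of the minimizer to minimize $W$, and then a rigidity statement is invoked to conclude this minimizing field is $\Lambda_2$ itself. For the genuine Coulomb case $s=1$ on $\mathbb{S}^2$ the same reduction applies in the relevant scaling window, the triangular-lattice functional reappearing once the mean field is removed. Granting Conjectures \ref{s1conj} and \ref{s3conj}, the transference is then automatic: the local limit $\Lambda_2$ of the $s_0$-minimizer ($s_0\in\{1,\log\}$) realizes the conjectured $s$-constant for every other admissible $s$, so $E_s(\omega_N^{s_0})-\mathcal{I}_s[\sigma]N^2$ matches $\mathcal{E}_s(N)-\mathcal{I}_s[\sigma]N^2$ to leading order. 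The case $s=-1$ affords an independent consistency check: by the Stolarsky invariance principle (Theorem \ref{Stolarsky Invariance}) the first assertion at $s=-1$ is equivalent to saying the log and Coulomb points minimize the $L_2$ cap discrepancy to leading order, which isolates the difficulty in a single classical quantity. The third-order logarithmic claim for Coulomb points requires upgrading step (a) from convergence to a \emph{rate}: one must show the local fields approach $\Lambda_2$ fast enough that the discrepancy integrates to $o(N)$ and that the pentagon/heptagon scar regions of Figure \ref{MinEnVoronoi} contribute $o(N)$; here I would combine the Betermin--Sandier lower bound with an upper-bound comparison showing the Coulomb minimizer cannot do worse than the configurations used to prove $\mathcal{E}_{\log}(N)\le\widehat CN+o(N)$ in Theorem \ref{logexpansion}.

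The main obstacle is plain: step (a), and a fortiori the third-order refinement, are with present technology at least as hard as the crystallization conjecture for Riesz energies on $\mathbb{S}^2$ — local convergence of minimizers to $\Lambda_2$ is not known for \emph{any} single $s$ in this range, which is precisely why Conjectures \ref{s1conj} and \ref{s3conj} are still conjectures. Even granting those, the residual gap is a uniqueness/rigidity statement, namely that a second-order-optimal configuration possesses a \emph{unique} local limiting field, the same for all $s$; the available tools (renormalized-energy $\Gamma$-convergence, Betermin's lattice comparisons, Hardin--Saff localization) reduce matters to minimizing $W$ but stop short of identifying its minimizer among all infinite point configurations. The third-order statement is strictly harder still, since it additionally demands a quantitative rate of local convergence that is currently out of reach.
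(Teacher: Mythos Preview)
The statement you are addressing is labeled a \emph{conjecture} in the paper, and the paper offers no proof of it whatsoever: it is put forward purely on the strength of the numerical experiments in Figures~\ref{logasymptotics3}, \ref{s1asymptotics2}, \ref{sneg1asymptotics2}, and \ref{s3asymptotics1}, with the prefatory remark that ``the apparent first and second order asymptotically minimal behavior of the energy of the Coulomb and log points \ldots\ is striking and we suggest a conjecture.'' There is therefore nothing to compare your attempt against.

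As for the attempt itself, you have written an honest and well-informed \emph{outline} rather than a proof, and you correctly identify the decisive obstruction: your step~(a), that the local empirical fields of the log or Coulomb minimizers converge to the triangular lattice $\Lambda_2$, is the two-dimensional crystallization conjecture, which is open for every $s$ in the range and is in fact logically at least as strong as Conjectures~\ref{s1conj} and~\ref{s3conj} themselves. The transference idea in step~(b) is sound \emph{conditional} on~(a), but note that even granting Conjectures~\ref{s1conj} and~\ref{s3conj} you still need the rigidity statement that the minimizing local field is \emph{unique} and equal to $\Lambda_2$; the $\Gamma$-convergence framework you cite delivers only that minimizers have local fields minimizing $W$, not that $W$ has $\Lambda_2$ as its unique minimizer among all point processes. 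Your third-order claim for the Coulomb points requires a quantitative rate on top of all this, which you also flag. In short: the proposal is not wrong, but it is a roadmap whose central step is a famous open problem, and the paper makes no claim to have travelled it.
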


\section{Proofs}

We begin with two auxiliary results.

\begin{proposition}
Let $\left\{P_N\right\}_{N=1}^\infty$ be a diameter bounded sequence of asymptotically equal area partitions of $\mathbb{S}^2$ such that each $P_N$ has $N$ cells. For each $P_N$, let $\omega_N$ be a configuration of points on $\mathbb{S}^2$ such that the interior of each cell of $P_N$ contains exactly one point of $\omega_N$. Then $\left\{\omega_N\right\}_{N=1}^\infty$ is equidistributed and provides a covering of $\mathbb{S}^2$ with $\eta(\omega_N) \leq CN^{-1/2}$ for all $N\in \mathbb{N}$, where $C$ is as in equation \textup{(\ref{diambdd})}.

\label{lemmaprop}
\end{proposition}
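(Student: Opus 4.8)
The plan is to establish the covering bound directly from (\ref{diambdd}) and then to deduce equidistribution by a Riemann-sum comparison that couples the diameter bound with the asymptotic equal-area hypothesis (\ref{asympeq}). For the covering, write $P_N=\{W_i^N\}_{i=1}^N$ and let $x_i$ denote the unique point of $\omega_N$ lying in the interior of $W_i^N$; in particular the hypothesis forces $|\omega_N|=N$. Since the cells cover $\mathbb{S}^2$, any $y\in\mathbb{S}^2$ belongs to some $W_i^N$, and then $\min_{x\in\omega_N}|x-y|\le|x_i-y|\le\mathrm{diam}\,W_i^N\le CN^{-1/2}$ by (\ref{diambdd}); taking the maximum over $y$ gives $\eta(\omega_N)\le CN^{-1/2}$. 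This uses nothing beyond the diameter bound.

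For equidistribution I would fix $f\in C(\mathbb{S}^2)$ and use that $f$ is uniformly continuous on the compact set $\mathbb{S}^2$, with modulus of continuity $\omega_f(\delta):=\sup\{|f(x)-f(y)|:|x-y|\le\delta\}\to0$ as $\delta\to0^{+}$. Since $\{W_i^N\}$ is a partition up to $\sigma$-null overlaps, $\int_{\mathbb{S}^2}f\,d\sigma=\sum_i\int_{W_i^N}f\,d\sigma$, while $\int_{\mathbb{S}^2}f\,d\nu_N=\frac1N\sum_i f(x_i)$. Comparing cell by cell and applying the triangle inequality, each term splits into a \emph{measure-defect} part $|f(x_i)|\,\bigl|\frac1N-\sigma(W_i^N)\bigr|$ and an \emph{oscillation} part $\int_{W_i^N}|f(y)-f(x_i)|\,d\sigma(y)\le\sigma(W_i^N)\,\omega_f(CN^{-1/2})$, the last bound coming from $x_i\in W_i^N$ together with $\mathrm{diam}\,W_i^N\le CN^{-1/2}$. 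Summing over the $N$ cells yields
\[
\Bigl|\int f\,d\nu_N-\int f\,d\sigma\Bigr|\le\|f\|_\infty\sum_{i=1}^N\Bigl|\frac1N-\sigma(W_i^N)\Bigr|+\omega_f\bigl(CN^{-1/2}\bigr).
\]
I would then bound the remaining sum with (\ref{asympeq}): setting $\varepsilon_N:=\max\{\,|N\max_i\sigma(W_i^N)-1|,\ |N\min_i\sigma(W_i^N)-1|\,\}$, we have $\varepsilon_N\to0$ and $|N\sigma(W_i^N)-1|\le\varepsilon_N$ for every $i$, hence $\sum_i\bigl|\frac1N-\sigma(W_i^N)\bigr|\le\varepsilon_N$. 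Letting $N\to\infty$, both terms on the right vanish, so $\int f\,d\nu_N\to\int f\,d\sigma$ for every $f\in C(\mathbb{S}^2)$; that is, $\nu_N\to\sigma$ in the weak-star sense and $\{\omega_N\}$ is equidistributed.

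I do not expect a genuine obstacle: the argument is essentially bookkeeping. The one point calling for a little care is that the cells $W_i^N$ are assumed only to be diameter bounded, not connected, convex, or otherwise regular, so one cannot invoke a mean-value theorem to replace $\int_{W_i^N}f\,d\sigma$ by $\sigma(W_i^N)f(\xi_i)$ for some $\xi_i\in W_i^N$; the modulus-of-continuity estimate above is exactly what makes the cell-wise comparison valid regardless of cell shape. One also uses implicitly that $|\omega_N|=N$, which is what legitimizes normalizing the counting measure by $1/N$ in the first place.
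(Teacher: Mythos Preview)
Your argument is correct. The covering bound is the same as the paper's (which simply calls it ``trivial''), but your equidistribution proof takes a genuinely different route. The paper tests against spherical caps: for a cap $A$ it introduces the $\delta$-enlargement $A_\delta$, uses the diameter bound to ensure that each cell meeting $A$ is contained in $A_\delta$, applies the asymptotic equal-area hypothesis to control $|\omega_N\cap A|/N$ from above by $\sigma(A_\delta)/(1-\epsilon)$, lets $\delta\to0$, and then sandwiches with the complement $\mathbb{S}^2\setminus A$ to get the $\liminf$ inequality. You instead test directly against continuous $f$ via a Riemann-sum comparison, splitting the error into a measure-defect term controlled by $\varepsilon_N$ and an oscillation term controlled by $\omega_f(CN^{-1/2})$. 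Your approach is arguably cleaner and yields an explicit quantitative estimate for $\bigl|\int f\,d\nu_N-\int f\,d\sigma\bigr|$; the paper's cap-based argument is more geometric and ties in with the spherical-cap-discrepancy framework used elsewhere in the paper. Both are standard and entirely valid.
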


\begin{proof}
The bound on the covering radius is trivial. Let $A\subset \mathbb{S}^2$ be a spherical cap and let 
\[A_\delta := \left\{x\in\mathbb{S}^2:\ \textup{dist}(x,A)\leq \delta \right\},\] where dist$(x,A) := \min_{y\in A}|x-y|$ is the standard distance function. For $x\in\omega_N$, denote by $W_x^N$ the cell of $P_N$ containing $x$. Let $\epsilon,\delta>0$ and choose $N$ large enough such that 
\[N\min_x\sigma(W_x^N)\geq 1-\epsilon\]
and $x\in A\cap\omega_N$ implies $W_x^N\subset A_\delta$. Then

\[\frac{|\omega_N\cap A|}{N}\leq \frac{|\left\{x\ :\ W_x^N\subset A_\delta\right\}|}{N}\leq\frac{\sigma(A_\delta)}{N\min_x\sigma(W_x^N)}\leq \frac{\sigma(A_\delta)}{1-\epsilon}.\]
Since $\epsilon$ is arbitrary, we have
\[\limsup_N\frac{|\omega_N\cap A|}{N}\leq \sigma(A_\delta).\]
Letting $\delta\to 0$ gives
\begin{equation}
\limsup_N\frac{|\omega_N\cap A|}{N}\leq \sigma(A).
\label{lemmaeq}
\end{equation}
Applying inequality (\ref{lemmaeq}) to $\mathbb{S}^2\setminus A$, we obtain
\[\liminf_N\frac{|\omega_N\cap A|}{N}=1-\limsup_N\frac{|\omega_N\cap(\mathbb{S}^2\setminus A)|}{N}\geq 1-\sigma(\mathbb{S}^2\setminus A) = \sigma(A),\]
and thus, we have
\[\lim_{N\to\infty}\frac{|\omega_N\cap A|}{N}=\sigma(A).\]
\end{proof}

\

\noindent\textit{Proof of Theorem \ref{spiral distribution}.} 

For a fixed $N$ denote in spherical coordinates $x_i:=(\phi_i,\theta_i)\in\omega_N$. We first prove the separation bound. Let $\epsilon_N = 2\sqrt{4\pi/N}$. For large $N$, by the pigeonhole principle, at least one set of adjacent points $x_k,x_{k+1}\in\omega_N$ is separated along $S_N$ by a distance less than $\sqrt{4\pi/N}$. So we can restrict our attention to $k\leq N/2$ and $\epsilon_N$-balls $B(x_k,\epsilon_N)$. For large N, if $k<8\pi+1/2$, then
\[\cos\phi_k = 1-\frac{2k-1}{N}\geq 1-\frac{16\pi}{N} \approx \cos(2\sqrt{\frac{4\pi}{N}}),\]
and $x_k$ is within the first two full longitudinal turns of $S_N$ starting from a pole. Otherwise, $B(x_k,\epsilon_N)$ contains disjoint levels of $S_N$. In this case, the minimal distance between levels in $B(x_k,\epsilon_N)$ is $\sqrt{4\pi/N}$. We compute the nearest neighbor distance between points in the same level as follows. Let

\[f_N(k):=\sqrt{N}|x_k-x_{k+1}|, \ \ \ \ x_k\in \omega_N,\ \ \ \ k<N/2.\]
Using the distance formula for spherical coordinates
\begin{equation}
|x_j-x_k|^2 = 2-2(\cos\phi_j\cos\phi_k+\sin\phi_j\sin\phi_k\cos(\theta_j-\theta_k))
\label{geodesic}
\end{equation}
and expanding $\cos^{-1}x$ around $x=1$ we have
\begin{equation}
\lim_{N\to\infty}f_N(k) = \sqrt{8k-4\sqrt{4k^2-1}\cos(\sqrt{2\pi}(\sqrt{2k-1}-\sqrt{2k+1}))},
\label{spirallimit}
\end{equation}
and $f_N(k)<f_{N+1}(k)$ for all $k<N/2-1$ and large N. Thus we have the correct order for the minimal separation between adjacent points in $\omega_N$. Furthermore, (\ref{spirallimit}) is increasing as a function of $k$ and thus
\begin{equation}
\lim_{N\to\infty} \min_k f_N(k) = \sqrt{8-4\sqrt{3}\cos(\sqrt{2\pi}(1-\sqrt{3}))}.
\label{spiralmin}
\end{equation}
Lastly, around the north pole, $k_1,k_2<8\pi+1/2$, we can again use (\ref{geodesic}) to show that
\[\lim_{N\to\infty} \sqrt{N}|x_{k_1}-x_{k_2}|\]
exists and can be computed case by case for pairs $(k_1,k_2)$. The southern hemisphere can be similarly computed. Comparing to (\ref{spiralmin}) gives the separation constant (\ref{spiralsep}).

  \begin{figure}[h!]
  \centering
  \includegraphics{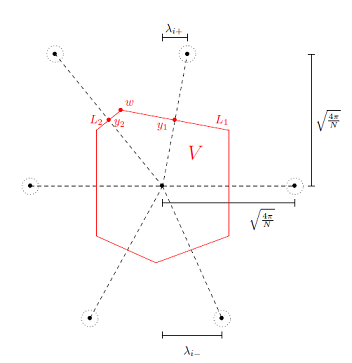}
  \caption{Limiting form of Voronoi cell for each $x_i\in K_h\cap\omega_N$ with relevant quantities labeled. We can choose $N$ large enough such that the nearest neighbors of $x_i$ lie within $\epsilon$ of the points taken along iso-latitudinal lines separated by $\sqrt{4\pi/N}$}
  \label{spiralvoronoicell}
  \end{figure}
    
    For covering, given $y\in\mathbb{S}^2$,
    \[\textup{dist}(S_N)\leq \sqrt{\frac{\pi}{N}}.\]
 From (\ref{spirallimit}), the maximal distance from any point on $S_N$ to a point of $\omega_N$ is $O(1/\sqrt{N})$ and thus the covering radius of $\omega_N$ is also $O(1/\sqrt{N})$.
 
 We have two additional observations. First, the Voronoi decompositions of the spiral points are diameter bounded. Secondly, the Voronoi cells are asymptotically equal area on $K_h := \left\{(x,y,z)\in \mathbb{S}^2:-h\leq z\leq h \right\}$ for any fixed $0<h<1$. By this we mean

 \[\lim_{N\rightarrow\infty}N\max_{V_x(\omega_N)\subset K_h} \sigma(V_x(\omega_N)) = \lim_{N\rightarrow\infty}N\min_{V_x(\omega_N)\subset K_h} \sigma(V_x(\omega_N)) = 1.\]

 Indeed, fixing $h$, for any $\epsilon>0$, we can take $N$ large enough such that given $x_i=(\phi_i,\theta_i)\in K_h\cap \omega_N$, $x_{i-1}$, and $x_{i+1}$ are almost iso-latitudinal with $x_i$ with separation $\sqrt{4\pi/N}$. I.e., 
 \[|x_{i\pm 1}-(\phi_i,\theta_i\pm\csc\phi_i\sqrt{4\pi/N})|<\epsilon.\]
 There exists shifts $0\leq\lambda_{i+},\lambda_{i-}\leq\sqrt{4\pi/N}$ such that the nearest points in the adjacent spiral levels are within $\epsilon$ of the points $(\phi_i\pm\sqrt{4\pi/N},\theta_i+\csc\phi_i\lambda_{i\pm})$ and $(\phi_i\pm\sqrt{4\pi/N},\theta_i+\csc\phi_i(\lambda_{i\pm}-\sqrt{4\pi/N}))$. Thus as $N\to\infty$, the Voronoi cell $V_i(\omega_N)$ approaches the form of $V$ in Figure \ref{spiralvoronoicell} and 
 \[\sigma(V_i(\omega_N)) = \sigma(V) + O(\epsilon^2).\]
 Furthermore, we can treat $V$ as a planar polygon in Figure \ref{spiralvoronoicell}, and 
 \[\sigma(V)=\frac{1}{N}\]
 independent of the shifts $\lambda_{i\pm}$. This we show by direct computation.
 
   Letting $a=\sqrt{4\pi/N}$ and centering $x_i$ at $(0,0)$, the points $y_1 = (\lambda_{i+}/2,a/2)$ and $y_2 = ((\lambda_{i+}-a)/2,a/2) $ are the midpoints of the lines connecting $x_i$ to its nearest neighbors in the adjacent level which are shifted by $\lambda_{i+}$. The corresponding lines
  \[L_1: y = -\frac{\lambda_{i+}}{a}(x-\frac{\lambda_{i+}}{2})+\frac{a}{2} \ \ \ \ \ \ \ L_2: y = \frac{a-\lambda_{i+}}{a}(x-\frac{\lambda_{i+}-a}{2})+\frac{a}{2}\]
  form the top boundary of V and have intersection point 
  \[w:= (\lambda_{i+}-a/2,(\lambda_{i+}a+a^2-\lambda_{i+}^2)/2a).\]
  From this we calculate the area of the top half of V to be
  \[\sigma(V_{top}) = \frac{1}{4\pi}\bigg[a\bigg(\frac{-a\lambda_{i+}+\lambda_{i+}^2+a^2}{2a}\bigg)+\frac{a}{2}\bigg(\frac{a\lambda_{i+}-\lambda_{i+}^2}{a}\bigg)\bigg]=\frac{1}{2N}.\]
The same calculation holds for the bottom half of $V$ and thus (\ref{asympeq}) holds.
  
We now consider equidistribution. Because the height steps between points in $\omega_N$ are uniform, for a spherical cap $A$ centered at a pole,
\begin{equation}
\lim_{N\to\infty}\frac{|\omega_N\cap A|}{N}=\sigma(A).
\label{equicap}
\end{equation}
If $A$ does not contain one of the poles, then $A\subset K_h$ for some $h$ and (\ref{equicap}) holds by Proposition \ref{lemmaprop}. Finally if $A$ is a cap containing but not centered at one of the poles, $A$ can be partitioned into an open cap of height $h$ centered at the pole and $A\cap K_h$. Because (\ref{equicap}) holds on each disjoint subset, it also holds on~$A$.
\qed

\

\noindent\textit{Proof of Proposition \ref{Fib mesh ratio}.}

When  $z=k,$ the basis vector $\textbf{c}_{k,i}$ has minimum length of $\sqrt{2}d$. At this latitude, $\textbf{c}_{k+1,i}$ and $\textbf{c}_{k-1,i}$ form the next most dominant spirals and have length $\sqrt{3}d$. For $z = k + 1/2$, $\textbf{c}_{k,i}$ and $\textbf{c}_{k+1,i}$ are equally dominant and have grid length $\sqrt[4]{5}d$. For a fixed latitude $\phi\neq \pm\pi/2$, $z$ increases with $N$. Points around $\phi$ will form a locally rectangular grid. Thus the separation approaches $\sqrt{2}d$ which occurs when $z=k$ and the largest hole in the triangulation around $x_i$ will be at most $\sqrt[4]{5}d/\sqrt{2}$ which occurs when $z=k\pm 1/2$. Thus off the poles, $\delta(\omega_{2N+1})\geq \sqrt{8\pi\sqrt{5}}/\sqrt{N}$ and $\eta(\omega_{2N+1})\leq \sqrt{2\pi/N}.$

Indeed, these inequalities hold for large zone numbers $z$ where $F_k\approx \varphi^k/\sqrt{5}$ holds. However, on the polar points, $x_1$ and $x_N$, the zone number
\[z=\frac{\log((2N+1)\pi\sqrt{5}(1-4N^2/(2N+1)^2))}{\log\varphi^2}\to\frac{\log(2\pi\sqrt{5})}{\log\varphi^2} = 2.75...,\ \ \ \ N\to\infty\]
and writing $\textbf{c}_{k,i}$ in the form of equation (\ref{Fib basis2}) for small $k$ overestimates the length of the vector. Using equation (\ref{Fib basis}) and noticing that $k-1/2\leq z \leq k+1/2$ implies
\[\frac{\varphi^{2k-1}}{(2N+1)\pi\sqrt{5}}\leq\cos^2\phi\leq\frac{\varphi^{2k+1}}{(2N+1)\pi\sqrt{5}},\]
we have
\[|\textbf{c}_{k,i}|^2\geq \frac{4\pi^2\varphi+20\pi F_k^2\varphi^{-2k+1}}{(2N+1)\pi\sqrt{5}}=O\bigg(\frac{1}{2N+1}\bigg), \ \ \ \ \ k-1/2\leq z \leq k+1/2.\]
Since $|\textbf{c}_{k,i}|$ is the minimal separation distance for $k-1/2\leq z\leq k+1/2$, we have the correct order of separation on $\mathbb{S}^2$. By a similar computation, we have the upper bound
\[|\textbf{c}_{k,i}|\leq O\bigg(\frac{1}{\sqrt{2N+1}}\bigg),  \ \ \ \ \ \ \ \ k-3/2\leq z \leq k+3/2.\]
Since the triangulation of $\omega_{2N+1}$ in each zone consists of $\textbf{c}_{k,i}$, $\textbf{c}_{k-1,i}$, and $\textbf{c}_{k+1,i}$, the covering of $\omega_{2N+1}$ is of the correct order on all of $\mathbb{S}^2$.
\qed

\

\noindent\textit{Proof of Proposition \ref{zonal quasi}.}

The diameter boundedness of the partition and Proposition \ref{lemmaprop} gives equidistribution and covering,

\[\eta(\omega_N) \leq \frac{3.5}{\sqrt{N}}.\]
In \cite{LeopardiDiam} and \cite{Zhou}, it is established that there exists $c_1,c_2>0$ such that for all partitions $P_N$, with $\phi_j$, $n$ as defined above,

\begin{equation}
\frac{c_1}{\sqrt{N}}\leq\phi_{j+1}-\phi_j\leq \frac{c_2}{\sqrt{N}},\ \ \ \ \ \ \ \ \ 0\leq j \leq n.
\label{zonal collar order}
\end{equation}

\noindent This gives the correct order of separation between collars. For neighbors $x_1,x_2 \in \omega_N$ within collar $j$, wlog suppose $\phi_j<\pi/2$. Using the fact that the normalized area of each cell can be expressed as

\[\frac{(\cos\phi_j-\cos\phi_{j+1})}{2y_i} = \frac{1}{N},\]
we have

\[|x_1-x_2|\geq \frac{2\pi\sin\phi_j}{y_i} = \frac{4\pi\sin\phi_j}{N(\cos\phi_j-\cos\phi_{j+1})}.\]
So it suffices to show there exists $c_3>0$ such that

\begin{equation}
\frac{\sin\phi_j}{\cos\phi_j-\cos\phi_{j+1}}\geq c_3\sqrt{N}\ \ \ \ \ \ \forall N, 0\leq j\leq n.
\label{zonalsep}
\end{equation}
For a fixed $h>0$ and $\phi_j\geq h$, this follows from (\ref{zonal collar order}) and the fact that $\cos$ is Lipschitz.
On the other hand, for sufficiently small $\phi_j$, there exists $c_4>0$ such that

\[\frac{\sin\phi_j}{\cos\phi_j-\cos\phi_{j+1}}\geq c_4\frac{\phi_j}{\phi_j^2-\phi_{j+1}^2}.\]
Again applying (\ref{zonal collar order}) twice,

\[\textup{RHS}\ (\ref{zonalsep})\geq \frac{c_4}{c_1}\frac{\phi_j}{\phi_j+\phi_{j+1}}\sqrt{N}\geq \frac{c_4}{c_1}\frac{\phi_j}{2\phi_j+\frac{c_2}{\sqrt{N}}}\sqrt{N}\geq c_3\sqrt{N}.\]
In the last step we used the fact that for some $c_5>0$ and all $j$

\[\phi_j\geq\cos^{-1}(1-\frac{2}{N})\geq \frac{c_5}{\sqrt{N}}.\]
\qed

\

\noindent\textit{Proof of Proposition \ref{HEALPix quasi}.}

The pixels are diameter bounded and thus by Proposition \ref{lemmaprop} the nodes are equidistributed.

To establish separation, we examine the five cases of nearest neighbor points: The points lie in 1) the polar region or 2) the equatorial region, and the points lie in a) the same ring or b) adjacent rings, and 3) the points lie in adjacent rings at the boundary of the polar region and the equatorial region.

Case 1a: If nearest neighbor points lie in the polar region along the same ring $1\leq i\leq k$ that has radius \[r_i = \sin\phi_i= \sqrt{\frac{2i^2}{3k^2} - \frac{i^4}{9k^4}}\]
 and $4i$ equally spaced points, then the separation $\delta$ satisfies 
 \[\delta = 2r_i\sin\frac{\pi}{4i} = 2\sqrt{\frac{2i^2}{3k^2} - \frac{i^4}{9k^4}}\sin\frac{\pi}{4i}\geq\frac{\sqrt{2}}{i}\sqrt{\frac{2i^2}{3k^2} - \frac{i^4}{9k^4}} = O\bigg(\frac{1}{k}\bigg) = O\bigg(\frac{1}{\sqrt{N}}\bigg).\]
 In the middle inequality we use the fact that
\begin{equation}
\sin x \geq \frac{\sqrt{2}/2}{\pi/4}x\ \ \ \ \ \ \ \ \  0\leq x \leq \pi/4.
\label{sineineq}
\end{equation}

Case 2a: Suppose the nearest neighbor points lie in the equatorial region along the same ring. Since each ring has $4k$ points, the smallest separation occurs at the ring farthest from the equator and closest to $z = 2/3$. Using (\ref{sineineq}) again, we have
\[ \delta = 2r_i\sin\frac{\pi}{4k}\geq \frac{2\sqrt{5}}{3}\sin\frac{\pi}{4k}\geq\frac{\sqrt{10}}{3k} = O\bigg(\frac{1}{\sqrt{N}}\bigg).\]

Case 1b: We split up the rings in the polar region into the outer half, $1\leq i \leq k/2$ and the inner half, $k/2\leq i\leq k$. On the outer half, the separation between rings is 
\begin{align*}
\delta \geq r_{i+1}-r_i =& \frac{(i+1)\sqrt{6k^2-(i+1)^2}-i\sqrt{6k^2-i^2}}{3k^2}\\
\geq& \frac{(k/2+1)\sqrt{6k^2-(k/2+1)^2}-(k/2)\sqrt{6k^2-(k/2)^2}}{3k^2} =  O\bigg(\frac{1}{\sqrt{N}}\bigg).\\
\end{align*}
\noindent On the inner polar rings, the separation between rings is 

\[\delta \geq |\cos\phi_{i+1}-\cos\phi_i| = \frac{2i+1}{3k^2}\geq \frac{k+1}{3k^2} =O\bigg(\frac{1}{\sqrt{N}}\bigg).\]

Case 2b: In the equatorial region, the ring height $z$ increases linearly with respect to the index $i$ giving

\[\delta \geq \frac{2}{3k} =O\bigg(\frac{1}{\sqrt{N}}\bigg).\]

Case 3 follows from Case 1b and 2b.

The covering of the points follows by similar geometric arguments.
\qed

\

\noindent\textit{Proof of Proposition \ref{rad icos quasi}.}

It suffices to show $\Pi$ is locally bi-Lipschitz on each face $\mathcal{F}$ of the icosahedron. If for some $\delta$, $L_1$, $L_2 > 0$

\[L_1|x-y|\leq|\Pi(x)-\Pi(y)|\leq L_2|x-y|, \ \ \ \ \ x,y\in \mathcal{F},\ \  |x-y|<\delta,\]
then

\[\gamma(\omega_N)\leq\frac{L_2}{L_1}\gamma(\widetilde{\omega_{N_k}}) = \frac{L_2}{L_1\sqrt{3}}.\]
Let $c := \min_{x \in \text{Icos}} |x| = \sqrt{(1/3+2\sqrt{5}/15)}$. For $x,y\in \mathcal{F}$ with angle $\theta$, we have

\[|x-y|\geq 2c\sin\frac{\theta}{2}.\]
Using the fact that $\sin^{-1}x\leq \frac{\pi}{2}x$ for $x\leq 1$ and $ |x-y|< 2c$,

\[|\Pi(x)-\Pi(y)|\leq \theta\leq 2 \sin^{-1}\bigg(\frac{|x-y|}{2c}\bigg)\leq\pi c |x-y|.\]
For the other inequality, wlog suppose $c\leq|x|\leq|y|\leq 1$, and consider the line $P \subset \mathcal{F}$ connecting $x$ and $y$. Denote $z$ as the projection of $0$ onto $P$. Defining $\phi := \cos^{-1}(|z|/|x|)$, we have

\[|y|-|x| = |z|\sec(\theta+\phi)-|z|\sec\phi\leq \sec(\theta+\phi)-\sec\phi.\]
Since $\phi \leq \cos^{-1}c-\theta$ and secant is convex on $(0,\pi/2)$,
\[|y|-|x|\leq\frac{1}{c}-\sec(\cos^{-1}c-\theta)=:g(\theta).\]
Thus,
\begin{align*}
|x-y|^2 &= |x|^2+|y|^2-2|x||y|\cos \theta\leq\max_{c\leq|y|\leq 1} (|y|+g)^2+|y|^2-2|y|(|y|+g)\cos\theta\\
& = (1+g)^2+1-2(1+g)\cos\theta.\\
\end{align*}
Since
\[f(\theta):=\frac{(1+g)^2+1-2(1+g)\cos\theta}{2-2\cos\theta}\]
is continuous for $\theta\in (0,\pi/2+\cos^{-1}c)$ and $\lim_{\theta\rightarrow 0} f(\theta)$ exists, there exists $L>0$ such that
\[|x-y|\leq L|\Pi(x)-\Pi(y)|, \ \ \ \ \ \ \ \ \ \theta\in (0,\pi/2+\cos^{-1}c).\]
\qed

\

\noindent\textit{Proof of Theorem \ref{oct quasi}.}
Restricting ourselves to the face of $\mathbb{K}$ with all positive coordinates, label the vertices of the partition $\left \{A_{i,j}\right \}_{0\leq i+j\leq k}$ by

\[A_{i,j} = \bigg( \frac{iL}{k\sqrt{2}}, \frac{jL}{k\sqrt{2}}, \frac{L}{\sqrt{2}}\bigg(1 - \frac{i+j}{k}\bigg)\bigg).\]
Let $\mathcal{A}_{i,j} = \mathcal{U}(A_{i,j})$. Then

\[\mathcal{A}_{i,j} = \bigg( \frac{i+j}{k}\sqrt{2-\frac{(i+j)^2}{k^2}}\cos\frac{\pi j}{2(i+j)}, \frac{i+j}{k}\sqrt{2-\frac{(i+j)^2}{k^2}}\sin\frac{\pi j}{2(i+j)}, 1-\frac{(i+j)^2}{k^2}\bigg).\]
Then
\begin{equation}
\delta (\omega_N) = \min_{i,j}\{ \|\mathcal{A}_{i+1,j}-\mathcal{A}_{i,j}\|, \|\mathcal{A}_{i,j+1} - \mathcal{A}_{i,j}\|, \|\mathcal{A}_{i+1,j} - \mathcal{A}_{i,j+1}\|\}.
\label{OctNeighbors}
\end{equation}
Adapting \cite{Oct}, we have

\[\|\mathcal{A}_{i+1,j} - \mathcal{A}_{i,j}\|^2 = 2\frac{(i+j+1)^2}{k^2}+2\frac{(i+j)^2}{k^2} - 2\frac{(i+j+1)^2(i+j)^2}{k^4} \ \ \ \ \ \ \ \ \ \ \]

\[ \ \ \ \ \ \ \ \ \ \ \ \ \ \ \ \ \ \ \ \ \ \   -2\frac{(i+j)(i+j+1)}{k^2}\sqrt{2-\frac{(i+j+1)^2}{k^2}}\sqrt{2-\frac{(i+j)^2}{k^2}}\cos\frac{\pi j}{2(i+j)(i+j+1)}.\] 
Along the line $i+j = c$, the minimum is obtained when the cosine term is maximized, i.e. at $j=0$. Thus

\[\min_{i,j}\|\mathcal{A}_{i+1,j} - \mathcal{A}_{i,j}\|^2 = \min_{0\leq i\leq k} \|\mathcal{A}_{i+1,0} - \mathcal{A}_{i,0}\|^2\]
\[ = \frac{2}{k^2}\min_{0\leq i \leq k}\bigg((i+1)^2 +i^2+\frac{(i+1)^2i^2}{k^2} - i(i+1)\sqrt{2-\frac{(i+1)^2}{k^2}}\sqrt{2-\frac{i^2}{k^2}}\bigg)= \frac{2}{k^2}.\]
By symmetry of the above expressions in $i$ and $j$,

\[\min_{i,j}\|\mathcal{A}_{i,j+1} - \mathcal{A}_{i,j}\|^2 = \min_{0\leq j \leq k}\|\mathcal{A}_{0,j+1} - \mathcal{A}_{0,j}\|^2 = \frac{2}{k^2}.\]
Lastly,

\[\|\mathcal{A}_{i+1,j} - \mathcal{A}_{i,j+1}\|^2 = 4\frac{(i+j+1)^2}{k^2}\bigg(2 - \frac{(i+j+1)^2}{k^2}\bigg)\sin^2\frac{\pi}{4(i+j+1)}\]
which again depends on only $i+j$. Using (\ref{sineineq}) we have
\begin{align*}
\min_{i,j}\|\mathcal{A}_{i+1,j} - \mathcal{A}_{i,j+1}\|^2 &= \min_{0\leq i \leq k} \|\mathcal{A}_{i+1,0} - \mathcal{A}_{i,1}\|^2\\
&= 4 \min_{0\leq i \leq k} \frac{(i+1)^2}{k^2}\bigg(2 - \frac{(i+1)^2}{k^2}\bigg)\sin^2\frac{\pi}{4(i+1)}\\
&\geq 4\min_{0\leq i\leq k} \frac{(i+1)^2}{k^2}\bigg(2 - \frac{(i+1)^2}{k^2}\bigg)\frac{1}{2(i+1)^2}\\
&= 4\min_{0\leq i \leq k} \frac{1}{2k^2}\bigg(2 - \frac{(i+1)^2}{k^2}\bigg) = \frac{2}{k^2}\bigg(2- \frac{(k+1)^2}{k^2}\bigg).\\
\end{align*}
Thus from (\ref{OctNeighbors})

\[\delta(\omega_{4k^2+2})^2 \geq \frac{2}{k^2}\bigg(2 - \frac{(k+1)^2}{k^2}\bigg).\]
Taking the square root and substituting $N = 4k^2 +2$ gives

\[\liminf_{N\rightarrow\infty}\delta(\omega_N)\sqrt{N} \geq \sqrt{8}.\]
Finally, the diameter bound in \cite{Oct} gives an immediate upper bound for the covering radius from which (\ref{Octmesh}) follows:

\[\eta(\omega_{4k^2+2},\mathbb{S}^2) \leq \sqrt{\frac{4+\pi^2}{8k^2}}.\]
\qed
\section{Matlab Code}

Many thanks to Grady Wright for his help in implementing some of these point sets in Matlab. Code for the Fibonacci, Hammersley, HEALPix, and cubed sphere nodes authored by Wright is available at\\
\noindent \textbf{\textit{https://github.com/gradywright/spherepts}}. The approximate Coulomb, log energy, and maximum determinant points were provided from computations by Rob Womersley. Code for the zonal equal area points is available in Paul Leopardi's Recursive Zonal Equal Area Toolbox from \textbf{\textit{eqsp.sourceforge.net.}} Code for the generalized spiral, octahedral, radial icosahedral, and equal area icosahedral nodes was created by the authors and is available upon request.

 \bibliographystyle{plain}
 \bibliography{SphereConfigurations8Bibliography}

\end{document}